\newtheorem{theorem}{Theorem}[section]
\newtheorem{lem}[theorem]{Lemma}
\newtheorem{prop}[theorem]{Proposition}
\newtheorem{theo}[theorem]{Theorem}
\newtheorem{holes_conj}[theorem]{Spread Out Simplices Conjecture}
\newtheorem{weakholes_conj}[theorem]{Weak Spread Out Simplices Conjecture}
\newtheorem{perms_conj}[theorem]{Acyclic System Conjecture}
\theoremstyle{definition}
\newtheorem{definition}[theorem]{Definition}
\newtheorem{remark}[theorem]{Remark}
\newtheorem{example}[theorem]{Example}
\newcommand \s{\sigma}
\newcommand \D{\Delta}
\title{\textsf{Acyclic systems of permutations and \\ fine mixed subdivisions of simplices}}
\author{\textsf{Federico Ardila\footnote{Department of Mathematics, San Francisco State University, \texttt{federico@sfsu.edu}. \newline  Partially supported by the National Science Foundation CAREER Award DMS-0956178, the National Science Foundation Grant DMS-0801075, and  the SFSU-Colombia Combinatorics Initiative.}
 \qquad Cesar Ceballos\footnote{Institut f\"ur Mathematik, Freie Universit\"at Berlin, 
 \texttt{ceballos@math.fu-berlin.de}. \newline Partially supported by the Proyecto Semilla of the Universidad de Los Andes, the Beca Mazda para el Arte y la Ciencia, and the SFSU-Colombia Combinatorics Initiative. }}}
\date{}
\begin{document}
\maketitle

\begin{abstract}
A fine mixed subdivision of a $(d-1)$-simplex $T$ of size $n$ gives 
rise to a system of~${d \choose 2}$ permutations of $[n]$ on the edges 
of $T$, and to a collection of $n$ unit $(d-1)$-simplices 
inside $T$. Which systems of permutations and which 
collections of simplices arise in this way? The \emph{Spread Out Simplices
Conjecture} of Ardila and Billey proposes an answer to the second question. 
We propose and give evidence for an answer to the first question, the \emph{Acyclic System Conjecture}.

We prove that the system of permutations of $T$ determines the collection of simplices of $T$. This establishes the Acyclic System Conjecture as a first step towards proving the Spread Out Simplices Conjecture. We use this approach to prove both conjectures for $n=3$ in arbitrary dimension.
\end{abstract}

\section{\textsf{Introduction}}

The fine mixed subdivisions of a dilated simplex arise in numerous contexts, and possess a remarkable combinatorial structure, which has been the subject of great attention recently. The goal of this paper is to prove several structural results about these subdivisions.

\begin{figure}[ht]
	\centering
	\includegraphics[width=1\textwidth]{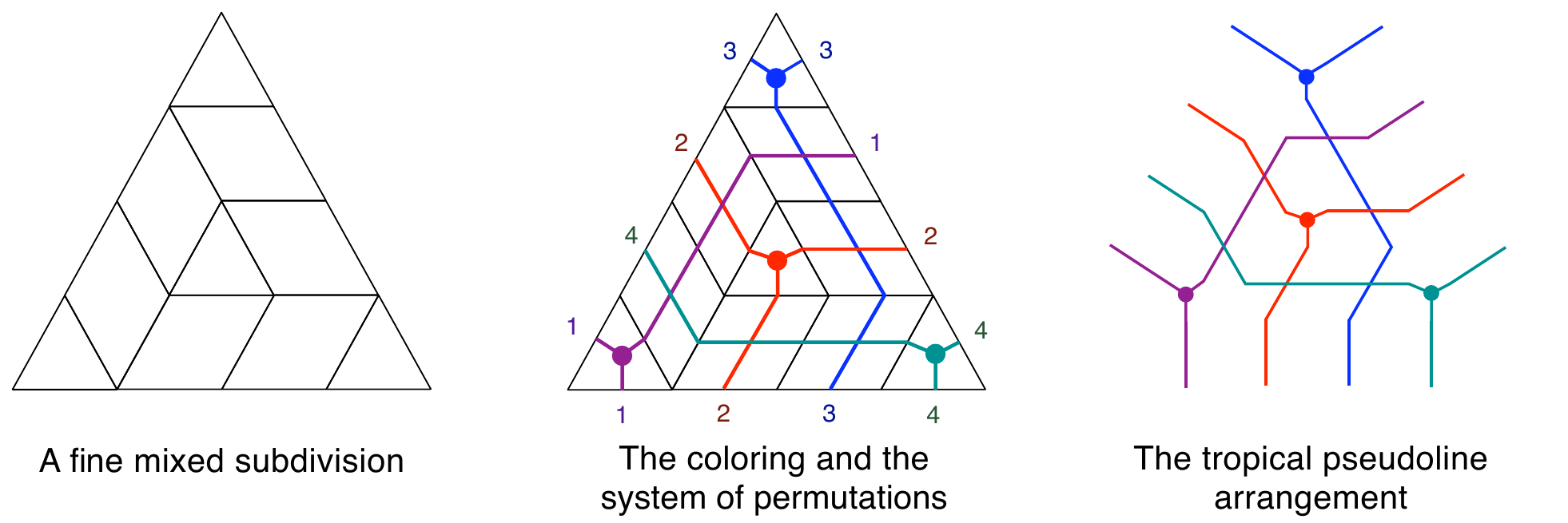}
	\caption{A fine mixed subdivision of $4\Delta_2$. \label{fig_sub_color_hyp}}
\end{figure}

A fine mixed subdivision of a $(d-1)$-simplex $T$ of size $n$ gives 
rise to a system of ${d \choose 2}$ permutations of $[n]$ on the edges 
of $T$, and to a collection of $n$ unit $(d-1)$-simplices 
inside $T$. We address the question: \textbf{Which systems of permutations and which collections of simplices arise from such subdivisions?} We prove several results in this direction. In particular we prove Ardila and Billey's Spread Out Simplices Conjecture \cite[Conjecture 7.1]{[Ardila-Billey]} in the special case $n=3$. 

\smallskip
\noindent
\textbf{1.1. Introduction}. 
We begin by summarizing the different sections of the paper, and stating our main results and conjectures. Figure \ref{fig_sub_color_hyp} illustrates the main concepts with pictures of the case $d=3$. We delay the precise definitions until the later sections. 

\smallskip
\noindent
\textbf{1.2. The fine mixed subdivisions of a simplex} $n\Delta_{d-1}$ are the subdivisions of the dilated simplex $n\Delta_{d-1}$ into fine mixed cells. A \emph{fine mixed cell} is a $(d-1)$-dimensional product of faces of $\Delta_{d-1}$ lying in independent affine subspaces.  For $d=3$, fine mixed subdivisions are the \emph{lozenge} tilings of an equilateral triangle into unit equilateral triangles and rhombi.

The fine mixed subdivisions of $n \Delta_{d-1}$ are in one-to-one
correspondence with triangulations of the polytope
$\Delta_{n-1} \times \Delta_{d-1}$ via the Cayley trick \cite{[Cayley]}. These and other equivalent objects arise very naturally in many contexts
\cite{ABHPS, [Ardila-Develin], Babson, Bayer, [Develin-Sturmfels], Gelfand, Haiman, Orden, Santosflips, Santostoric,  [Cayley], Sturmfels}. 
Fine mixed subdivisions are our main object of study. \textbf{We will often call them simply ``subdivisions".}  

\smallskip
\noindent \textbf{1.3. The coloring of a fine mixed subdivision} 
is a natural coloring of the cells of a fine mixed subdivision.
It gives rise to an arrangement 
of tropical pseudohyperplanes which
plays a key role in the theory of tropical oriented matroids. 
\cite{[Ardila-Develin], HJJS09}.   

\smallskip
\noindent \textbf{1.4. The system of permutations} of a fine mixed subdivision $T$
is the restriction of the coloring to the edges of the simplex $n \Delta_{d-1}$.
It can be seen as a set of permutations of $[n]$, one on each edge. It has the great advantage that it is simpler than the coloring, while maintaining substantial geometric information about the subdivision. Say a system of permutations of $T$ is \emph{acyclic} if no closed walk on the edges of the simplex contains two colors in alternating order: $\ldots i \ldots j \ldots i \ldots j  \ldots i \ldots j \ldots$. In two dimensions, this property characterizes the systems of permutations coming from subdivisions:

\begin{quote}
\textbf{Theorem \ref{theo_characterization_dim2}.} (2-D Acyclic System Theorem)
\emph{A system of permutations on the edges of a triangle
can be achieved by a lozenge tiling if and only if it is acyclic.
}\end{quote}

We also show a result which will be relevant later:

\begin{quote}
\textbf{Theorem \ref{theo_permstoholes}.} (Short version)
\emph{
The positions of the triangles in a lozenge tiling of a triangle 
are completely determined by the system of permutations.
}
\end{quote}

\smallskip
\noindent \textbf{1.5. The Acyclic System Conjecture} seeks to generalize Theorem 
\ref{theo_characterization_dim2} to higher dimensions:

\begin{quote}
\textbf{Theorem \ref{theo_acyclic}.}
\emph{The system of permutations of a fine mixed subdivision of $n\Delta_{d-1}$  is acyclic. 
}\end{quote}

\begin{quote}
\textbf{Acyclic System Conjecture \ref{conj_acyclic}.}
\emph{Any acyclic system of permutations on $n\Delta_{d-1}$ is achievable 
as the system of permutations of a fine mixed subdivision.
}\end{quote}

We remark that this conjecture was recently disproved by Francisco Santos in~\cite{Sa2012}, after this paper was submitted for publication. He constructed a counterexample in the case $n=5$ and $d=4$.

\smallskip
\noindent \textbf{1.6. Duality, deletion, and contraction}
are useful notions, inspired by matroid theory, that were first studied for triangulations of products of simplices by Santos in \cite{[Cayley]}, and for tropical oriented matroids by Ardila and Develin in \cite{[Ardila-Develin]}. We introduce these notions in the context of fine mixed subdivisions and systems of permutations, showing that they are compatible with the earlier ones.

\smallskip
\noindent \textbf{1.7. From systems of permutations to simplex positions}. Ardila and Billey \cite{[Ardila-Billey]} proved that any fine mixed subdivision on $n\Delta_{d-1}$ contains exactly $n$ simplices. We use duality to generalize Theorem \ref{theo_permstoholes} to any dimension:
\begin{quote}
\textbf{Theorem \ref{theorem of positions}.} (Short version)
\emph{
The positions of the $n$ simplices in a fine mixed subdivision of $n\Delta_{d-1}$ are completely determined by its system of permutations.
}
\end{quote}

\smallskip
\noindent \textbf{1.8. The Spread Out Simplices Conjecture} of Ardila and Billey, which is motivated by the Schubert calculus computations of Billey and Vakil \cite{BV}, concerns a surprising relation between fine mixed subdivisions and the matroid of lines in a generic complete flag arrangement. Using the machinery built up in the previous sections, we are able to prove this conjecture for small simplices  in any dimension.

Ardila and Billey \cite{[Ardila-Billey]} showed that the $n$ simplices in any fine mixed subdivision of $n\Delta_{d-1}$ must be \emph{spread out}, meaning that any sub-simplex of size $k$ contains at most $k$ of them. They also conjectured that the converse holds:

\begin{quote}
\textbf{Spread Out Simplices Conjecture \ref{holes_conjecture}.}
\cite{[Ardila-Billey]}
\emph{
A collection of $n$ simplices in $n\Delta_{d-1}$ can be extended to a fine mixed subdivision if and only if it is spread out.}
\end{quote}

\noindent Theorem \ref{theorem of positions} allows us to split the Spread Out Simplices Conjecture \ref{holes_conjecture} into two: the Acyclic System Conjecture \ref{conj_acyclic} and the Weak Spread Out Simplices Conjecture \ref{system_holes_conjecture}:

\begin{quote}
\textbf{Weak Spread Out Simplices Conjecture \ref{system_holes_conjecture}.}
\emph{Every spread out collection of $n$ simplices in $n\Delta_{d-1}$ 
can be achieved as the set of  
simplices of an acyclic system of permutations.
}\end{quote}

\noindent Using this approach, we are able to show:

\begin{quote}
\textbf{Theorem \ref{size 3 federico's conjecture}.}
\emph{
The Spread Out Simplices Conjecture holds for $n=3$.
}
\end{quote}

As we remarked above, Santos \cite{Sa2012} recently disproved the Acyclic System Conjecture. The Spread Out Simplices Conjecture \ref{holes_conjecture} (and the weak version of it) remain open.

\section{\textsf{Fine mixed subdivisions of a simplex}}\label{sec:fine_mixed_subdivisions}

\begin{remark}
\textbf{We will simply refer to fine mixed subdivisions as ``subdivisions"} throughout the paper. The only other subdivisions we will consider are the triangulations of $\Delta_{n-1} \times \Delta_{d-1}$, which we will always refer to as ``triangulations." 
 \end{remark}

\begin{remark} \label{rem:notation}
Throughout the paper, the vertices of $\Delta_{n-1}$ will be denoted $v_1, \ldots, v_n$ and the vertices of $\Delta_{d-1}$ will be denoted $w_1, \ldots, w_d.$ The letters $a$ and $b$ will represent elements of $[d]$ and the letters $i,j,k,$ and $\ell$ will represent elements of $[n]$.
\end{remark}

Fine mixed subdivisions
and several equivalent objects
have been recently studied from many different points of view. 
Aside from their beautiful intrinsic structure  
\cite{Babson, Bayer, Gelfand}, they have been
used as a building block for constructing efficient triangulations of
high dimensional cubes \cite{Haiman, Orden} and disconnected
flip-graphs \cite{Santosflips, Santostoric}. They also arise very
naturally in connection with root lattices \cite{ABHPS}, arrangements of flags \cite{[Ardila-Billey]}, tropical geometry \cite{[Ardila-Develin], [Develin-Sturmfels], SH10}, transportation problems, and Segre embeddings \cite{Sturmfels}. \\

Before defining and studying subdivisions of $n\Delta_{d-1}$ in full generality,
let us start by discussing the easier -- but by no means trivial -- problem of understanding the lozenge tilings of an equilateral triangle. This is the special case $d=3$.

\begin{figure}[h]
	\centering
	\includegraphics[width=0.6\textwidth]{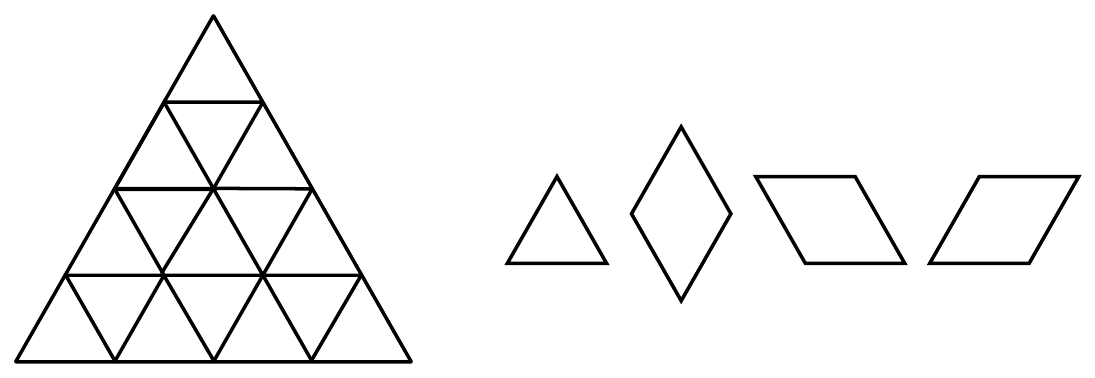}
	\caption{The triangle $4\Delta_2$ and the four different tiles allowed in a lozenge tiling.}
	\label{tiling} 
\end{figure}

Let $n\Delta_2$ be an equilateral triangle with side length 
equal to $n$. A \emph{lozenge tiling} of $n\Delta_2$ is a 
subdivision of $n\Delta_2$ into upward unit triangles and 
unit rhombi, as illustrated on the right hand side of 
Figure \ref{tiling}.  It is not hard to see 
that any lozenge tiling of $n\Delta_2$ consists of $n$ 
triangles and $ n \choose 2 $ rhombi. 
Figure \ref{tiling_T4} shows an example 
of a lozenge tiling of $4\Delta_2$.

\begin{figure}[h]
	\centering
	\includegraphics[width=0.23\textwidth]{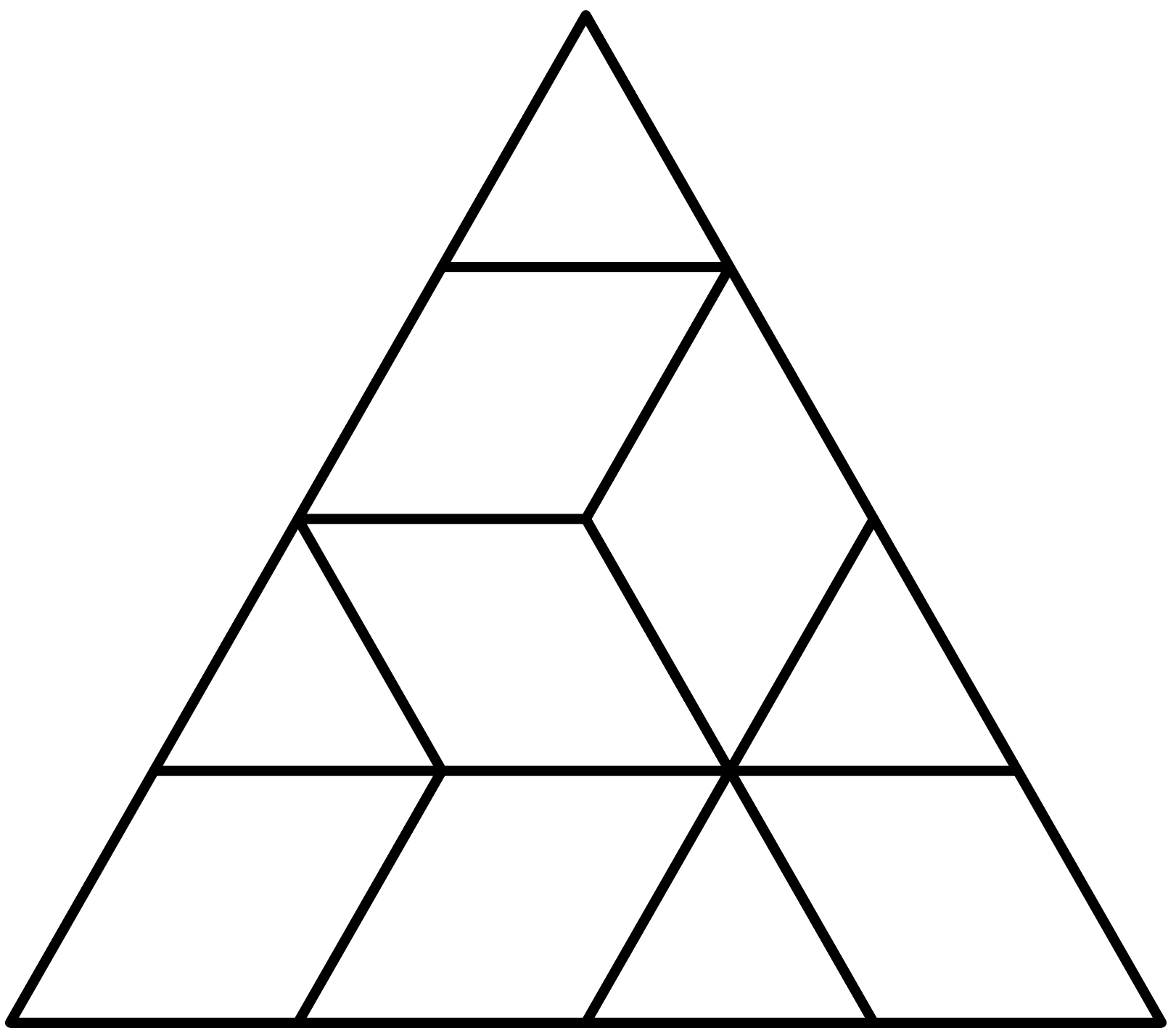}
	\caption{Example of a lozenge tiling of $4\Delta_2$.}
	\label{tiling_T4}
\end{figure}

The most natural high-dimensional analogues of the lozenge tilings of the 
triangle $n\Delta_2$ are the fine mixed subdivisions of 
the simplex $n\Delta_{d-1}$. We briefly recall their definition; for a more thorough treatment, see \cite{[Cayley]}.

The \textit{Minkowski sum} of polytopes $P_1,\dots ,P_k$ 
in $\mathbb R ^m$ is the polytope:
\[
P_1+\dots +P_k:=\{p_1+\dots +p_k\, | \, p_1\in P_1,\dots ,p_k\in P_k\}.
\]
Let 
\[
\Delta_{d-1}=\{ (x_1,\ldots , x_d)\in \mathbb R^d : x_i\geq 0 \text{ and } x_1+\ldots +x_d=1 \}
\]
be the standard unit $(d-1)$-simplex, and $n\Delta _{d-1}=\Delta_{d-1}+\dots +\Delta_{d-1}$ be its scaling by a factor of $n$. 

A \emph{fine mixed cell} is a Minkowski sum $B_1+\dots +B_n$ where the $B_i$s are faces of $\Delta _{d-1}$ which lie in independent affine subspaces, and whose dimensions add up to $d-1$.
A \emph{fine mixed subdivision} $S$ of $n\Delta _{d-1}$ is 
a subdivision
of $n\Delta_{d-1}$ into fine mixed cells, such that the intersection of any two cells is a face of both of them. 
 Figure \ref{subdivision3dim} 
 shows examples of subdivisions of $3\Delta_2$ and $3\Delta_3$.

\begin{figure}[h]
\includegraphics[width=0.3\textwidth]{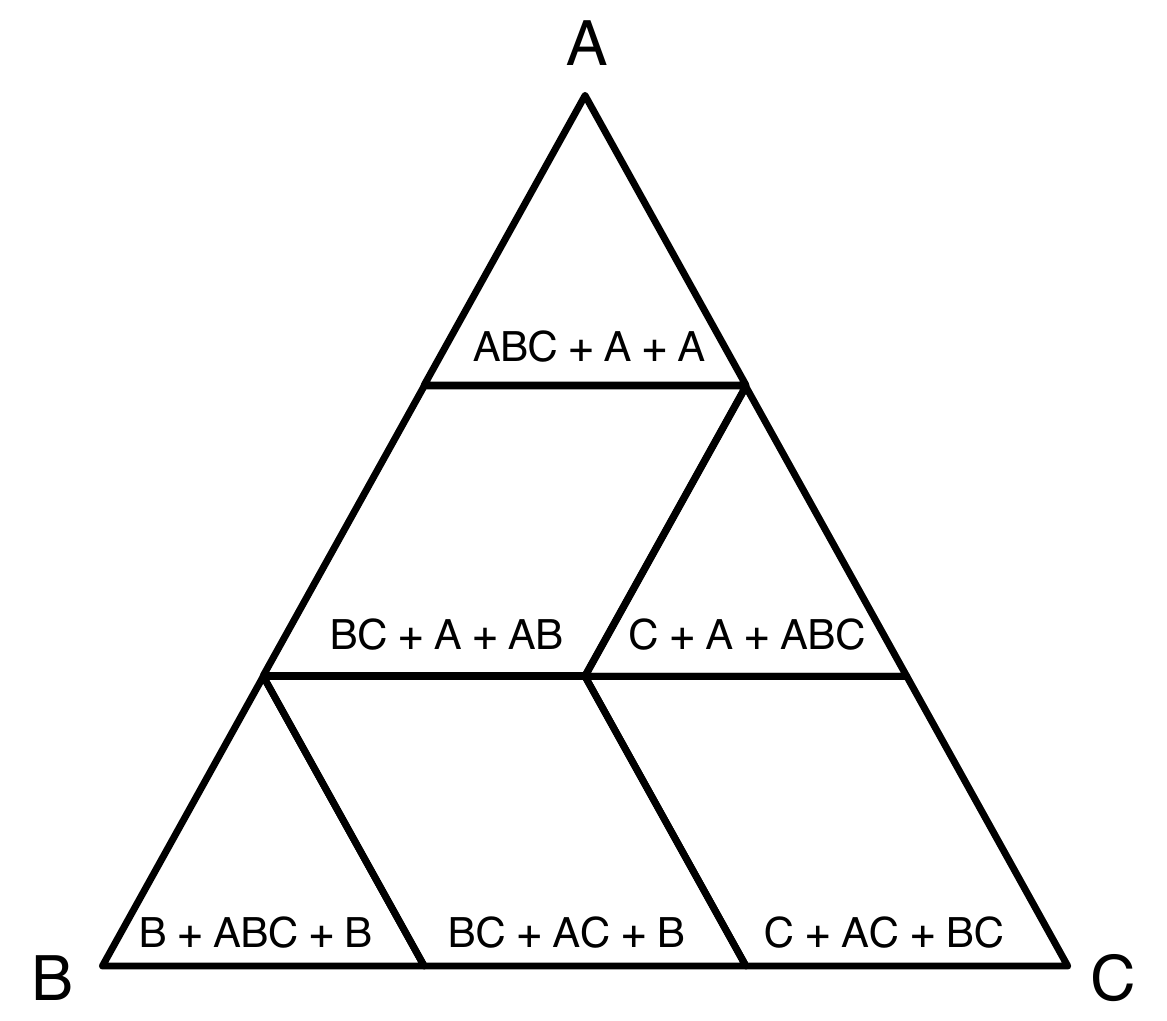} \qquad \qquad  
	\includegraphics[width=0.6\textwidth]{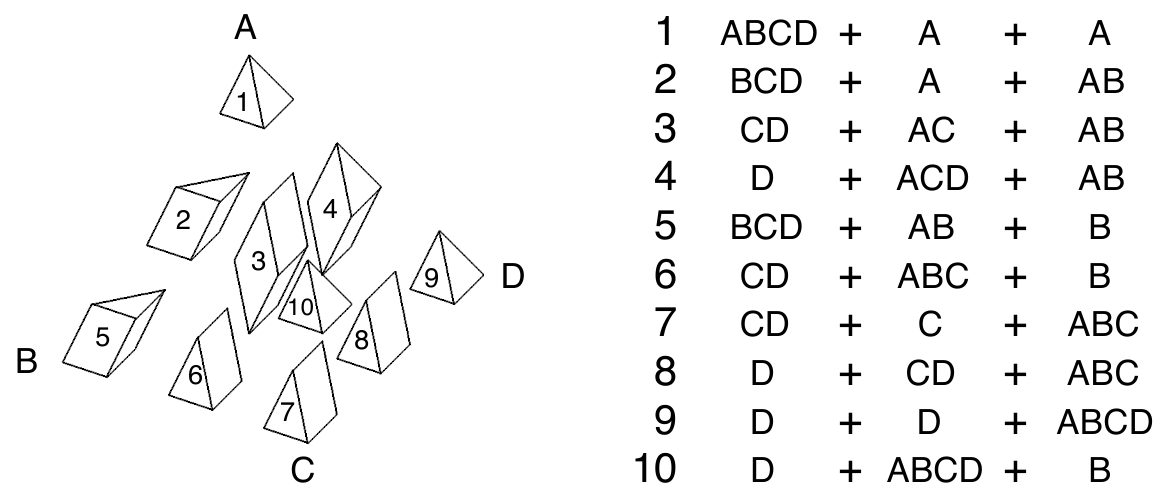}
\caption{Subdivisions of $3\Delta_2$ and $3 \Delta_3$, and the Minkowski sum decompositions of the full dimensional cells.}
	\label{subdivision3dim}
\end{figure}

\begin{remark}
Santos \cite{[Cayley]} showed that the cells in a subdivision of $n\Delta_{d-1}$ can be labeled by ordered Minkowski sums in such a way that, if $B_1 + \cdots + B_n$ is a face of $C_1 + \cdots + C_n$, then $B_i$ is a face of $C_i$ for each $i$. This property is normally required in the definition of a mixed subdivision of $P_1 + \cdots + P_n$, but it holds automatically in this special case.
\end{remark}

\begin{remark}
Ardila and Billey showed that any subdivision of $n\Delta_{d-1}$ 
contains exactly $n$ tiles that are simplices \cite[Proposition 8.2]{[Ardila-Billey]}. 
\end{remark}

\section{\textsf{The coloring of a fine mixed subdivision}}\label{section_The coloring of a fine mixed subdivision}
Given a fine mixed subdivision of a simplex 
one can construct its \emph{colored dual polyhedral complex},
which we simply call its \emph{colored dual}.
This complex can be regarded as a \emph{tropical 
pseudo-hyperplane arrangement}; the interested reader is referred to \cite{[Ardila-Develin], SH10}. 

\begin{figure}[h]
	\centering
	\includegraphics[width=0.3\textwidth]{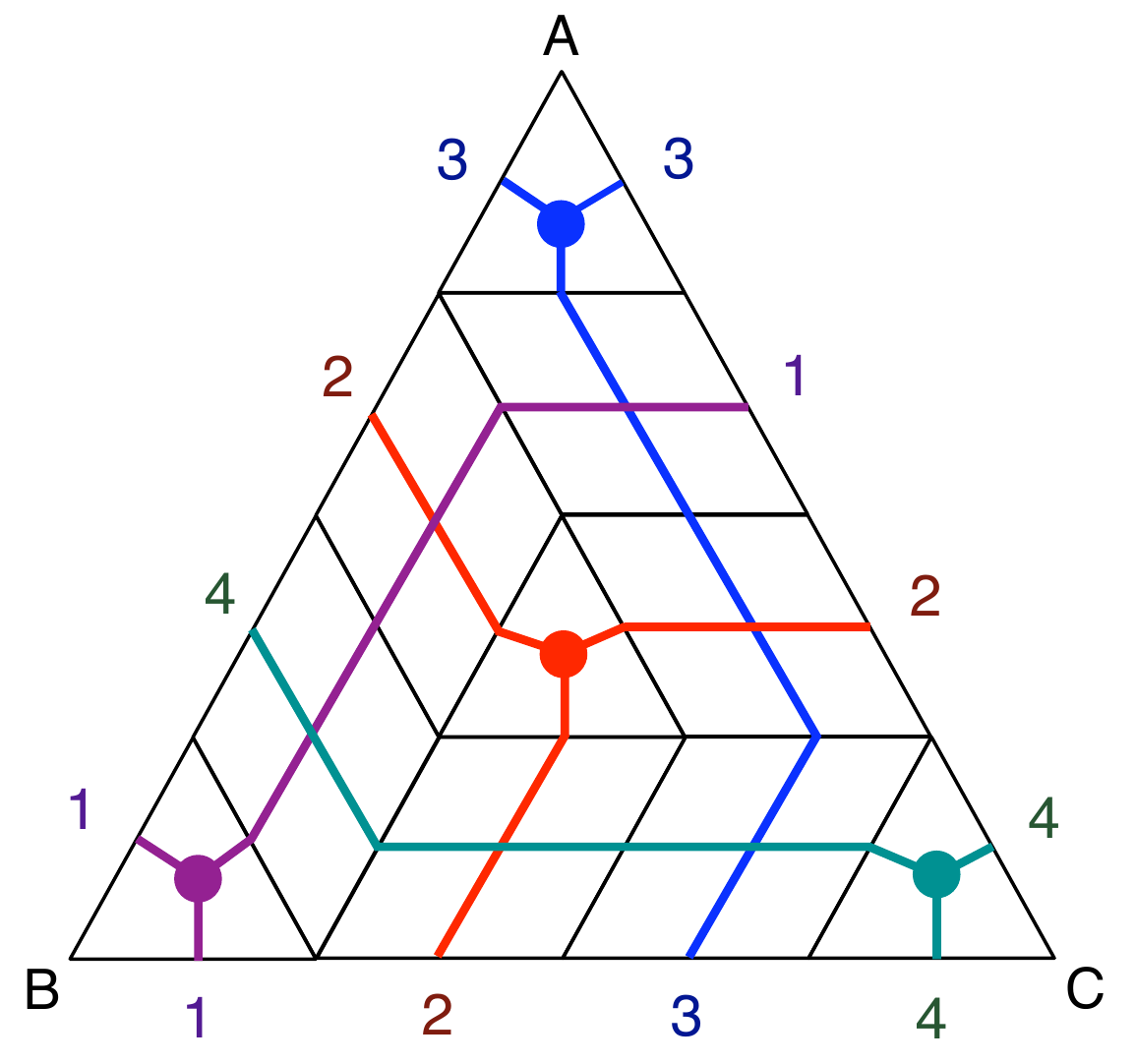}
	\caption{The colored dual of a subdivision of $4\Delta_2$ and 
	the corresponding system of permutations on the edges of the triangle (which we will introduce later).}
\label{Coloration of a triangle}
\end{figure}

Very loosely speaking, the colored dual assigns a different color to each of the $n$ unit simplices, and lets each color spread from the center of the simplex through the cells of the tiling. Figures \ref{Coloration of a triangle} and \ref{coloration3dim} illustrate this process in dimensions 2 and 3.
Formally, we define the coloring using the \emph{mixed Voronoi subdivision}.

\begin{figure}[h]
	\centering
	\includegraphics[width=0.8\textwidth]{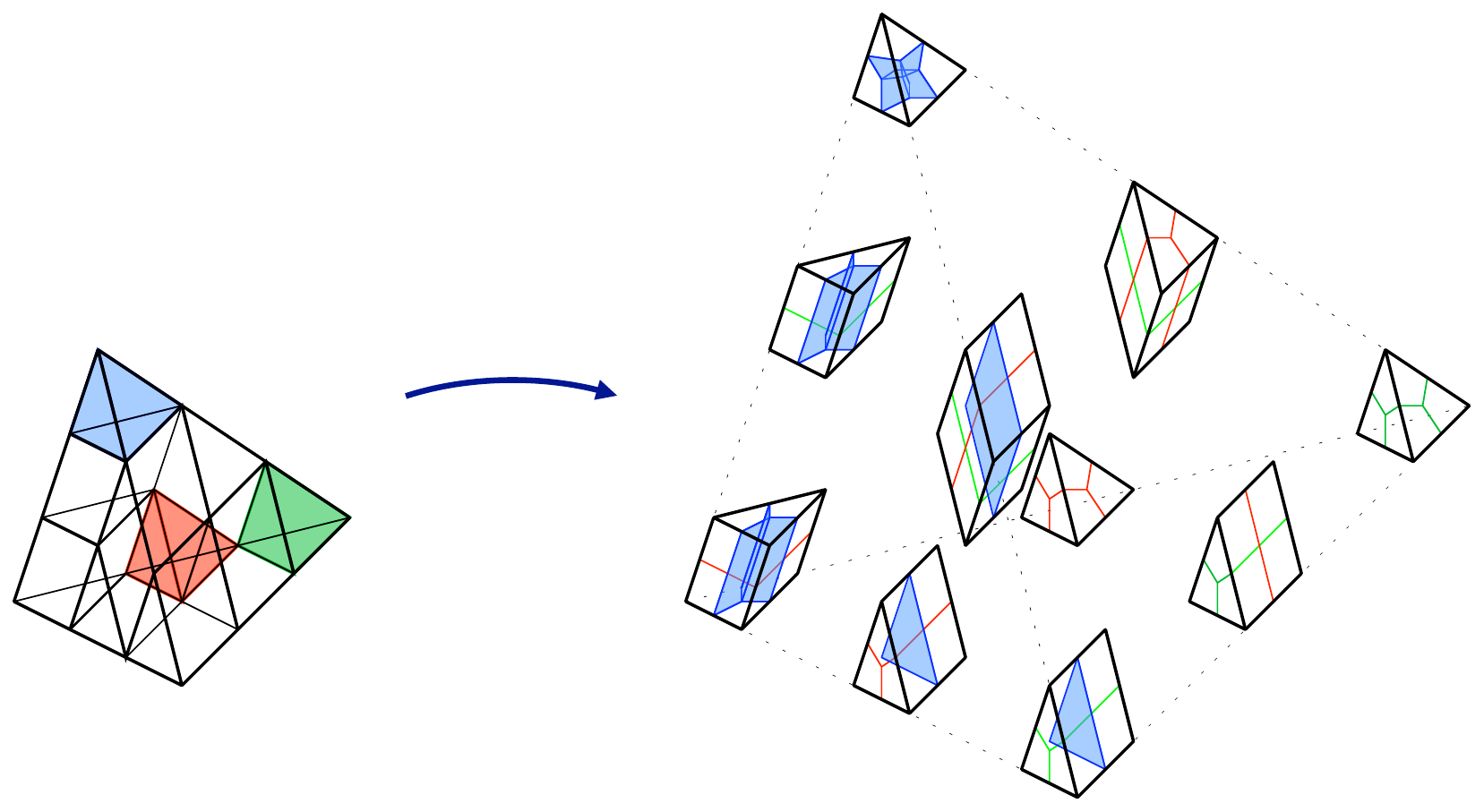}
	\caption{The colored dual of a subdivision of $3\Delta_3$. Only one color is shown in its entirety.}
\label{coloration3dim}
\end{figure}

\begin{definition} The \emph{Voronoi subdivision} of a $k$-simplex divides it 
into $k+1$ regions, where region $i$ consists of the 
points in the simplex for which $i$ is the closest vertex.
Given a subdivision $S$ of $n\Delta_{d-1}$, 
we subdivide each fine mixed cell $S_1+\dots  + S_n$ 
into regions $R_1+\dots +R_n$, where $R_i$ is a 
region in the Voronoi subdivision of $S_i$.
The resulting subdivision of $n\Delta_{d-1}$ is called
the \emph{mixed Voronoi subdivision} of $S$.  
\end{definition}

\begin{figure}[h]
	\centering
	\includegraphics[width=0.9\textwidth]{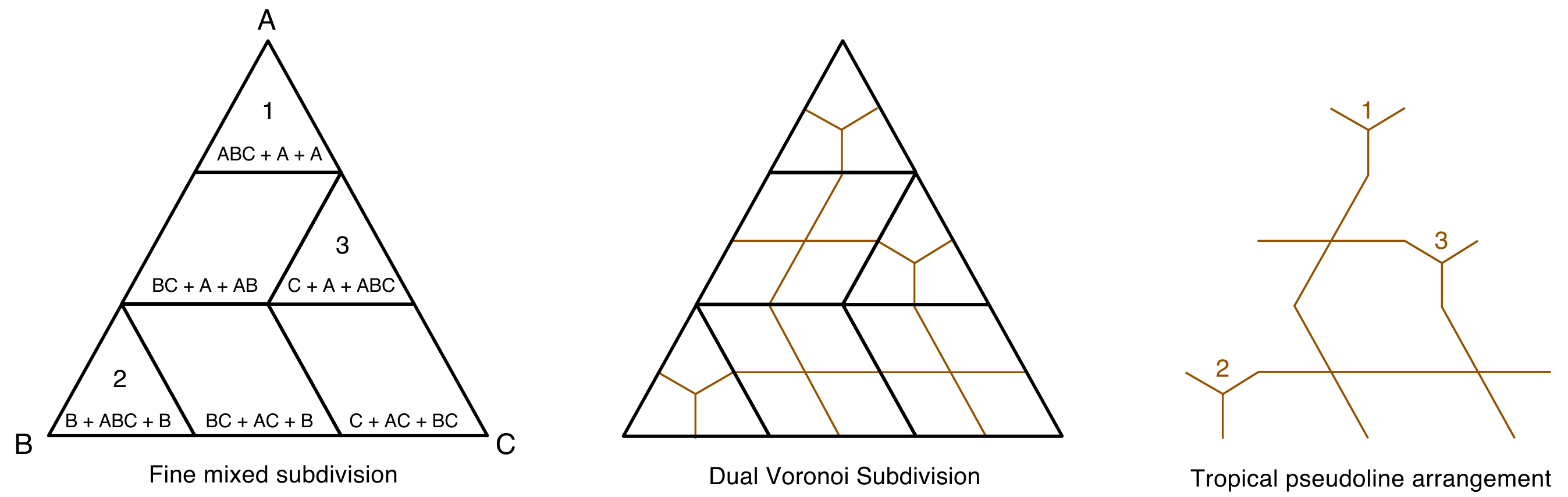}
	\caption{A fine mixed subdivision, its mixed Voronoi subdivision, and
	its colored dual.}
	\label{fine_mixed_voronoi}
\end{figure}

\begin{definition}
The \emph{colored dual} of a subdivision $S$ of $n\Delta_{d-1}$
is the colored polyhedral complex formed by the lower-dimensional faces  
in the mixed Voronoi subdivision of $S$, excluding those on the boundary of $n \Delta_{d-1}$. In the mixed cell $S_1+\dots +S_n$,
\emph{color} $i$ is given to  
$S_1+\dots  \overline {S_i}+\dots  +S_n$,
where $\overline{S_i}$ is the complex of lower dimensional faces  
in the Voronoi subdivision of $S_i$, 
excluding those on the boundary of $S_i$.  
\end{definition}

We will not use the metric properties of this subdivision. In fact,  the dual complex may be defined as a purely combinatorial object. The distinction is not important for us, but this choice of geometric realization will simplify some of our definitions.

\begin{remark}
The \emph{color} $i$ is the subcomplex (called a \emph{tropical pseudohyperplane} in \cite{[Ardila-Develin]}) consisting of the cells of the colored dual having color $i$. It  
subdivides the simplex $n\D_{d-1}$
into $d$ regions which are naturally labeled by the
vertices $w_1,\dots , w_d$ of the simplex.

A cell $S_1 + \cdots + S_n$ is intersected precisely by the colors $i$ such that $\dim (S_i)\neq 0$, or equivalently, $S_i$ has at least two letters. The summand $S_i$ is given by the set of regions (letters) of color $i$  that this cell intersects. 
(See Figure \ref{fine_mixed_voronoi} for an example). It is useful to assign to a face of the triangulation  the same color(s) as its dual cell in the colored dual. 
\end{remark}

\begin{remark}\label{rem:crossing}
In the colored dual of a lozenge tiling $S$, every pair of colors intersects exactly once. One way to see this is to consider, for positive variables $\lambda_1, \ldots, \lambda_n$, the subdivision $S_{\bf \lambda}$ of the triangle $\lambda_1 \Delta_2 + \ldots + \lambda_n \Delta_2$ which is combinatorially isomorphic to $S$. The area of this triangle is $\sum_i \frac12 \lambda_i^2 + \sum_{i<j}  \lambda_i \lambda_j$. The $i$th triangle of $S_\lambda$ contributes an area of $\frac12 \lambda_i^2$ to the subdivision, while a rhombus where colors $i$ and $j$ intersect contributes an area of $\lambda_i \lambda_j$. Therefore there is exactly one such rhombus for each $i$ and $j$.
A similar statement (and proof) holds in any dimension. 
\end{remark}

\section{\textsf{The system of permutations: the two-dimensional case}} \label{section system 2D}

In this section, which focuses only on the two-dimensional case, we define our main object of study: the system of permutations of a subdivision. We then prove several structural results about these systems of permutations. In Sections \ref{sec:acyclic_conjecture}, \ref{section dual}, and \ref{sec:simplex_positions} we will do this (somewhat less successfully) in higher dimensions. 

Given a lozenge tiling of the triangle $n\Delta_2$, and a numbering of its $n$ triangles,
restrict the colored dual to the edges. This determines  three permutations of $[n]$, which we read in clockwise 
direction, starting from the lower left vertex. In Figure 
\ref{Coloration of a triangle}, the system of permutations is 
$(1423,  3124, 4321)$.
We address three questions: 

1. Is a lozenge tiling completely determined by its system of permutations?

2. Which triples of permutations of $[n]$ can arise from a lozenge tiling in this way?

3. How are the system of permutations and the positions of the unit triangles related?

%

The answer to Question 1 is negative, as Figure \ref{fig:dif_tilings_same_system}
shows. Questions 2 and 3 are more interesting, and they are addressed in the following three subsections. 
We answer Question 2 positively: In Theorem 
\ref{theo_characterization_dim2} we give a simple
characterization of the systems of permutations that can be obtained 
from a lozenge tiling. We also answer Question 3 by showing, in Theorem \ref{theo_permstoholes}, that the system of permutations determines uniquely the numbered positions of the triangles. 
 
\begin{figure}[ht]
	\centering
	\includegraphics[width=0.53\textwidth]{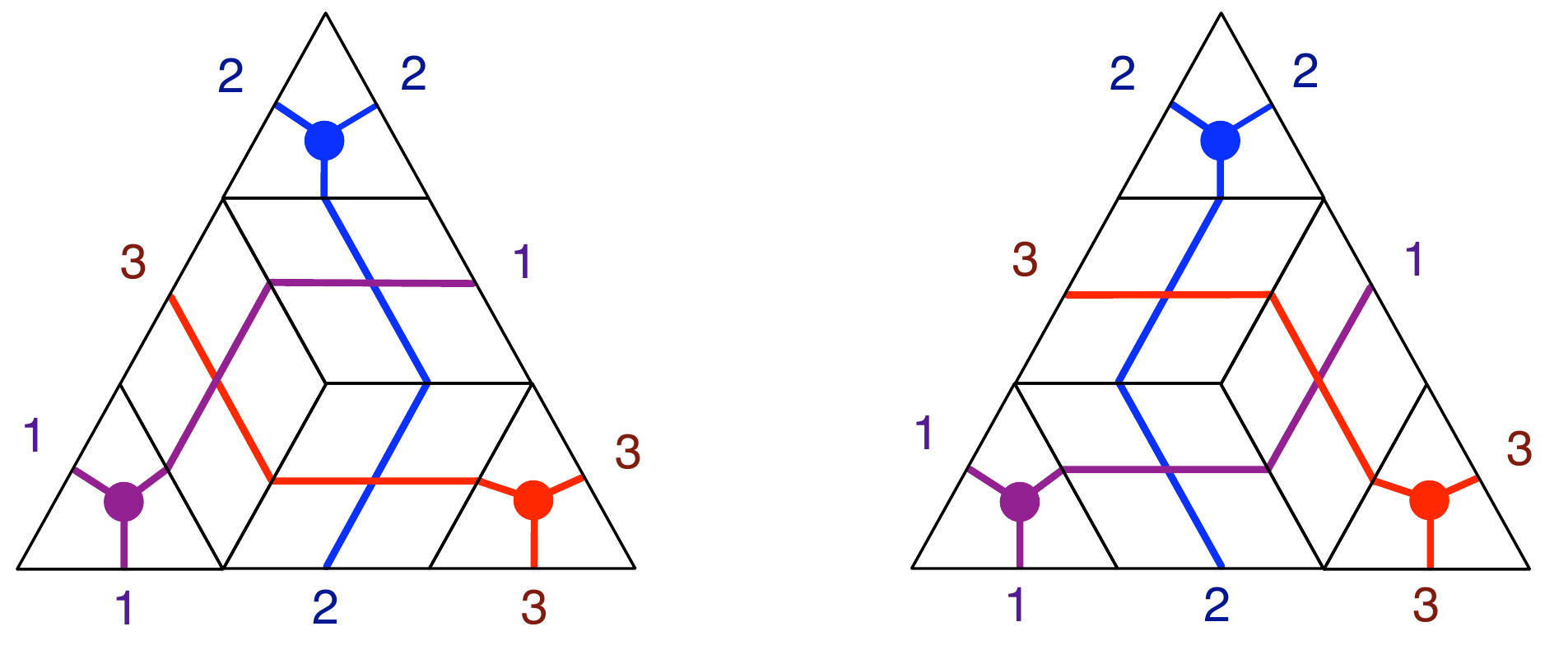}
	\caption{Two different tilings with the same system of permutations.}
	\label{fig:dif_tilings_same_system}
\end{figure}

\subsection{\textsf{The two-dimensional acyclic system theorem.}}

\begin{definition}
A \emph{system of permutations} on the edges of the triangle $n\Delta_2$
is a set of three permutations of $[n]$ on the edges of the triangle.
We say that a system of permutations is \emph{acyclic} if, 
when we read the three permutations in clockwise 
direction, starting from a vertex of the triangle, we never see 
a ``cycle" of the form
$\dots  i \dots  j \dots , \dots  i \dots  j \dots , \dots  i \dots  j \dots $.
\end{definition}

\noindent The system of permutations $(12,12,12)$ on the
edges of $2\D_2$ is the smallest system that is not acyclic. It clearly cannot be realized as the system of permutations 
of a lozenge tiling.

\begin{theo}[$2$-D Acyclic System Theorem]\label{theo_characterization_dim2}
Let $\s$ be a system of permutations on the edges of the triangle $n\Delta_2$. Then $\s$ is achievable as the system of permutations of
a lozenge tiling if and only if $\s$ is acyclic.
\end{theo}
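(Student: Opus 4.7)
The forward direction follows from Remark \ref{rem:crossing}. Suppose $\sigma$ is not acyclic, so some pair of colors $i, j$ appears on the three boundary edges of $n\Delta_2$ in a cyclic pattern $\ldots i \ldots j \ldots,\ \ldots i \ldots j \ldots,\ \ldots i \ldots j \ldots$ read clockwise. Then the three leaves of color $j$ on the boundary interleave with those of color $i$ as $x_1 y_1 x_2 y_2 x_3 y_3$. The tripod of color $i$ divides $n\Delta_2$ into three regions, one adjacent to each of the three boundary arcs $x_k x_{k+1}$, and each leaf $y_k$ lies in the closure of the region $R_k$ adjacent to its arc. Since color $j$ is a connected tripod with a single interior center, that center lies in one of the three regions $R_k$, and the two legs of color $j$ reaching the leaves in the other two regions must each cross color $i$. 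Hence the intersection of colors $i$ and $j$ contains at least two points, contradicting Remark \ref{rem:crossing}.

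For the backward direction I plan to induct on $n$. The base case $n=1$ is trivial. In the inductive step, given an acyclic system $\sigma$ on $n\Delta_2$, I will identify a color $i^* \in [n]$ whose triangle is forced by $\sigma$ to lie along one edge of $n\Delta_2$, and then peel off $i^*$'s triangle together with the adjacent row of rhombi. The resulting region is a copy of $(n-1)\Delta_2$ carrying the restricted system $\sigma \setminus \{i^*\}$ on the remaining $n-1$ colors, which is still acyclic as a sub-system of an acyclic system. By the inductive hypothesis this reduced system is realized by some tiling of $(n-1)\Delta_2$, and reinserting the peeled row produces a tiling of $n\Delta_2$ realizing the original $\sigma$.

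The main obstacle is exhibiting such a peelable color $i^*$. The most natural candidate is a \emph{corner color} at some vertex $V$, that is, a color whose leaves lie at the extremal $V$-end positions of both boundary edges incident to $V$; such a color has its triangle at $V$ and is immediately peelable. Unfortunately, corner colors need not exist: for instance the acyclic system $(123, 231, 312)$ read clockwise on the edges of $3\Delta_2$ has no corner color at any vertex. A more delicate argument, making essential use of the global acyclicity condition rather than just the behavior at a single vertex, is therefore required to locate a peelable color along \emph{some} edge, and to verify that the reinsertion matches the prescribed permutations on all three edges exactly. I expect this compatibility check---showing that the enlarged tiling realizes $\sigma$ on every edge simultaneously---to be the technical heart of the backward direction.
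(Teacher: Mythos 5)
Your forward direction is correct and is essentially the paper's argument: a cyclic pattern forces the tropical lines of colors $i$ and $j$ to interleave around the boundary, and hence to cross at least twice, contradicting Remark \ref{rem:crossing}. Your version spells out the interleaving/tripod argument in more detail than the paper does, which is fine.

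The backward direction, however, is not a proof: it is a plan with the decisive step missing, and you say so yourself. The entire content of the converse is the inductive step, and your proposal reduces it to the claim that every acyclic system on $n\Delta_2$ admits a ``peelable'' color $i^*$ --- one whose triangle, in some realization, sits in a clean strip of one triangle and $n-1$ rhombi along an edge, so that removing the strip leaves $(n-1)\Delta_2$ carrying exactly the restricted system. You correctly observe that corner colors need not exist, but you do not establish the existence of an edge-peelable color either, and this is not a routine verification: peelability along the edge opposite a vertex $V$ requires a single color to occupy the extreme positions at both far endpoints of the two edges through $V$, and it also requires a realization in which that color's two legs run straight along the strip rather than wandering upward. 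Neither assertion follows from acyclicity without an argument of roughly the same difficulty as the theorem itself. (Incidentally, your example $(123,231,312)$ does admit an edge-peelable color even though it has no corner color, so the obstruction you exhibit is not the one your plan actually needs to overcome.)

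For comparison, the paper runs the induction in the opposite direction: it deletes the color $n$, realizes the smaller acyclic system by a tiling $T'$, and then \emph{inserts} color $n$ by cutting $T'$ along three monotone lattice paths from the three prescribed boundary points $D,E,F$ to a common interior point $M$ and gluing in a unit triangle plus three ribbons of rhombi. The technical heart is proving that such a common meeting point exists; this is done by choosing $T'$ extremally (maximizing the segment $PQ$ where the path families $G_B$ and $G_C$ meet) and showing, via a retiling argument, that failure of the meeting condition would produce a cycle $\dots i \dots n \dots$ in $\sigma$. Some argument of this extremal/structural kind --- using acyclicity globally, not just positionally --- is exactly what your proposal still owes, so as it stands the converse direction has a genuine gap.
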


\begin{proof}[Proof of Theorem \ref{theo_characterization_dim2}]

Let $\s=(u,v,w)$ be a system of permutations of a lozenge tiling of the triangle $n\Delta_2$.
If the three permutations $u, v$, and $w$ contained the elements $i$ and $j$ in the same order, then in the dual complex, colors $i$ and $j$ would need to intersect at least twice, contradicting Remark \ref{rem:crossing}. This proves the forward direction. 

For the converse we proceed by induction. The case $n=1$ is trivial. Now assume that the result is true for $n-1$, and consider an acyclic system $\sigma = (u,v,w)$ of permutations of $[n]$. Let $\sigma' = (u',v',w')$ be the acyclic system of permutation of $[n-1]$ obtained by removing the number $n$ from $u,v,$ and $w$, and let $T'$ be a tiling of the triangle $ABC$ (of side length $n-1$) realizing $\sigma'$. Let $D, E,$ and $F$ be the points on the segments $BC, CA,$ and $AB$ where the number $n$ needs to be inserted in the permutations $w', v',$ and $u'$. 

\begin{figure}[h]
	\centering
	\includegraphics[width=0.9\textwidth]{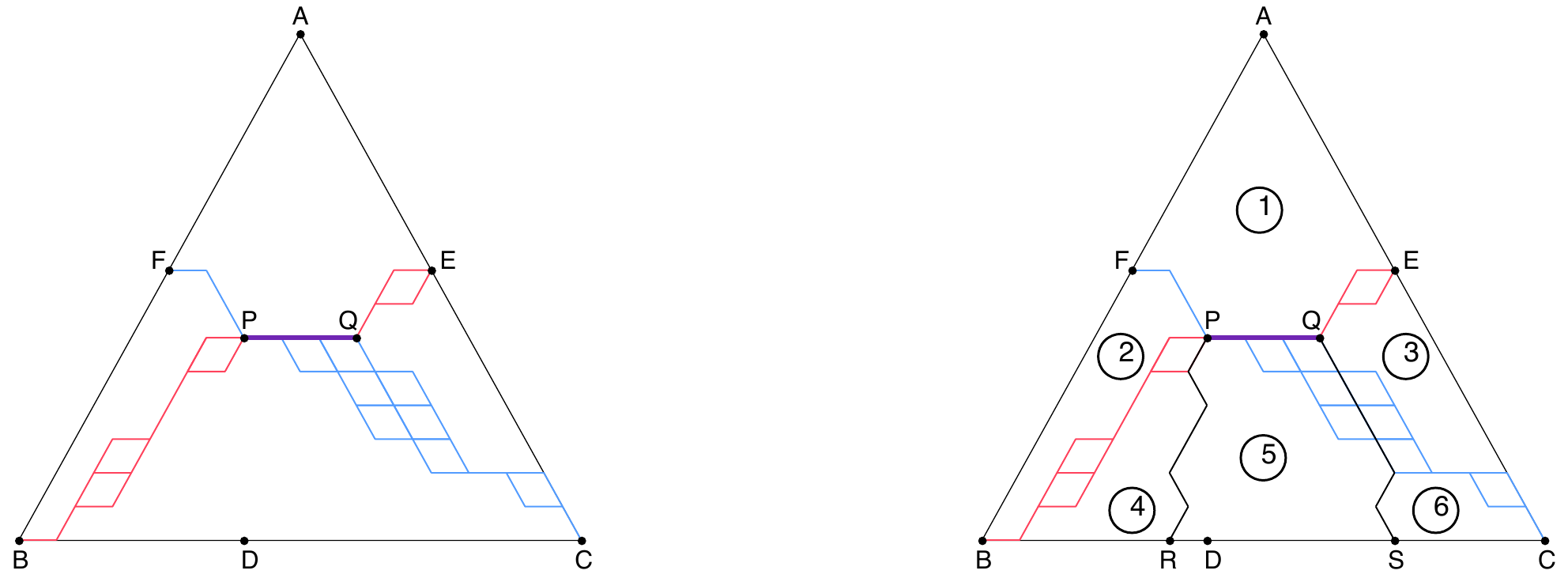}
	\caption{Left: The southwest paths from $E$ to $B$ and the southeast paths from $F$ to $C$. Right: The paths $PR, QS, G_B,$ and $G_C$ divide the triangle into six regions.}
	\label{fig:twopathsacross}
\end{figure}

In the tiling $T'$, let $G_B$ be the union of all ``southwest" paths from $E$ to $B$, consisting of southwest and west edges. Let $G_C$ be the union of all ``southeast" paths from $F$ to $C$, consisting of southeast and east edges. The (non-empty) intersection of $G_B$ and $G_C$ is  a horizontal segment (or possibly a single point); label its left and right endpoints $P$ and $Q$ respectively. Assume that $T'$ was chosen so that the length of $PQ$ is maximum.

Now consider the leftmost ``south" path, using southwest and southeast edges,  from $P$ to edge $BC$. Let its other endpoint be $R$, and call this path $PR$. Similarly, let $QS$ be the rightmost ``south" path from $Q$ to edge $BC$. The previous paths split the triangle into six regions, which we number $1, \ldots, 6$ as shown in the right panel of Figure \ref{fig:twopathsacross}. The cells in $G_C$ and $G_B$ (which are necessarily rhombi) are considered to be in none of the six regions.

If $D$ is between $R$ and $S$, then there is a ``north" path, using northeast and northwest edges, from $D$ to a point $M$ on $PQ$. Consider any northwest path $MF$ along $G_B$ and any northeast path $ME$ along $G_C$. Now cut the tiling along the paths $MD, ME,$ and $MF$, and glue it back together using an equilateral triangle at $M$ and three paths of rhombi of the shape of $MD, ME,$ and $MF$, as shown in Figure \ref{fig:faultlines}. The result will be a tiling $T$ which realizes the system of permutations $\sigma$.

\begin{figure}[h]
	\centering
	\includegraphics[width=0.7\textwidth]{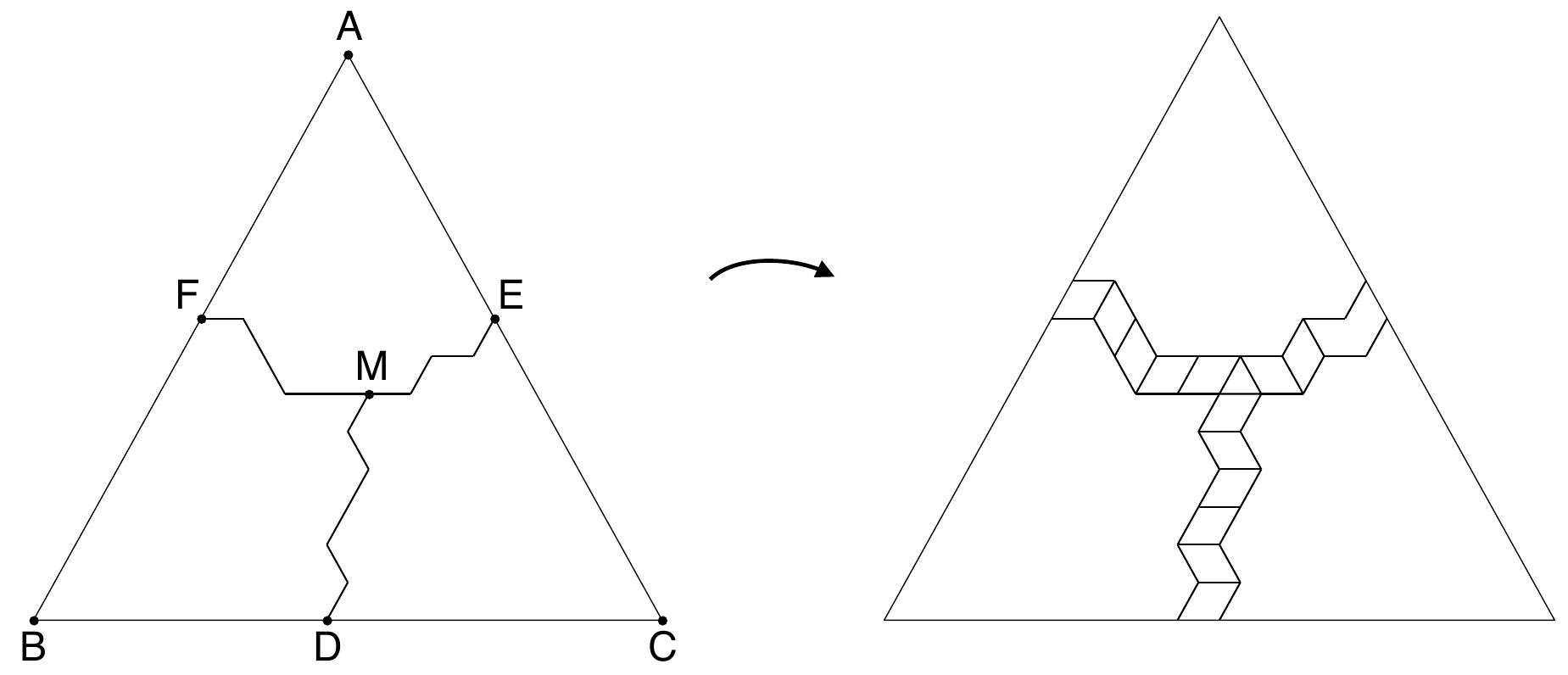}
	\caption{From a tiling of $(n-1)\Delta_2$ to a tiling of $n\Delta_2$.}
	\label{fig:faultlines}
\end{figure}

If $D$ is not between $R$ and $S$, then we claim that $\sigma$ is not acyclic. To prove it, assume without loss of generality that $D$ is to the left of $R$. Consider the edge of $T'$ directly to the left of $R$; say it has color $i$. It is clear that triangle $i$ must be in region $1, 2,$ or $4$. We will show that in fact triangle $i$ is in region $2$. This will imply that $\sigma$ contains the cycle $\dots i \dots n \dots, \dots i \dots n \dots, \dots i \dots n \dots$.

\begin{figure}[h]
	\centering
	\includegraphics[width=0.25\textwidth]{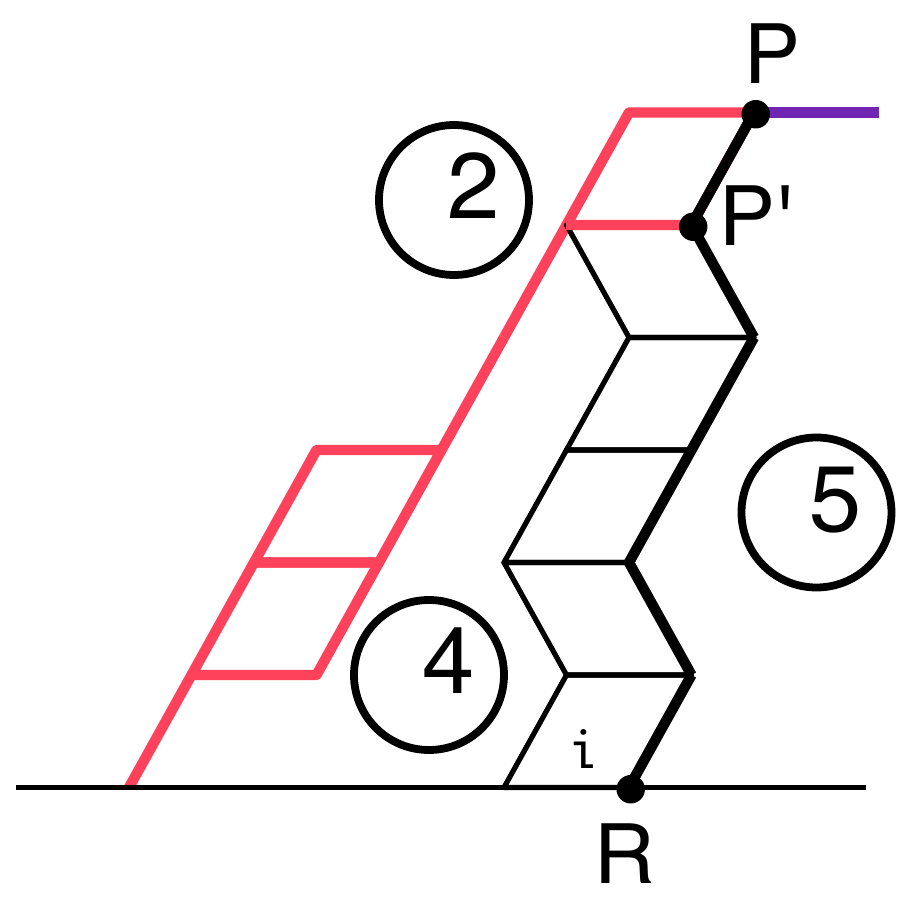}
	\caption{Triangle $i$ cannot be in region 4.}
	\label{fig:notin4}
\end{figure}

First we show that triangle $i$ is not in region $4$. Let $PB$ be the southernmost path from $P$ to $B$ in $G_B$, and let $P'$ be the first place where the paths $PB$ and $PR$ diverge. Note that all edges from $P$ to $P'$ must be southwest edges. By the definitions of $PB$ and $PR$, there are no southwest edges hanging from $P'$. In particular, the next edge in $PB$ after $P'$ is a west edge. This forces all the tiles directly to the left of $PR$ to be horizontal. Therefore triangle $i$ is above $P'$, and hence not in region $4$.

\begin{figure}[h]
	\centering
	\includegraphics[width=0.6\textwidth]{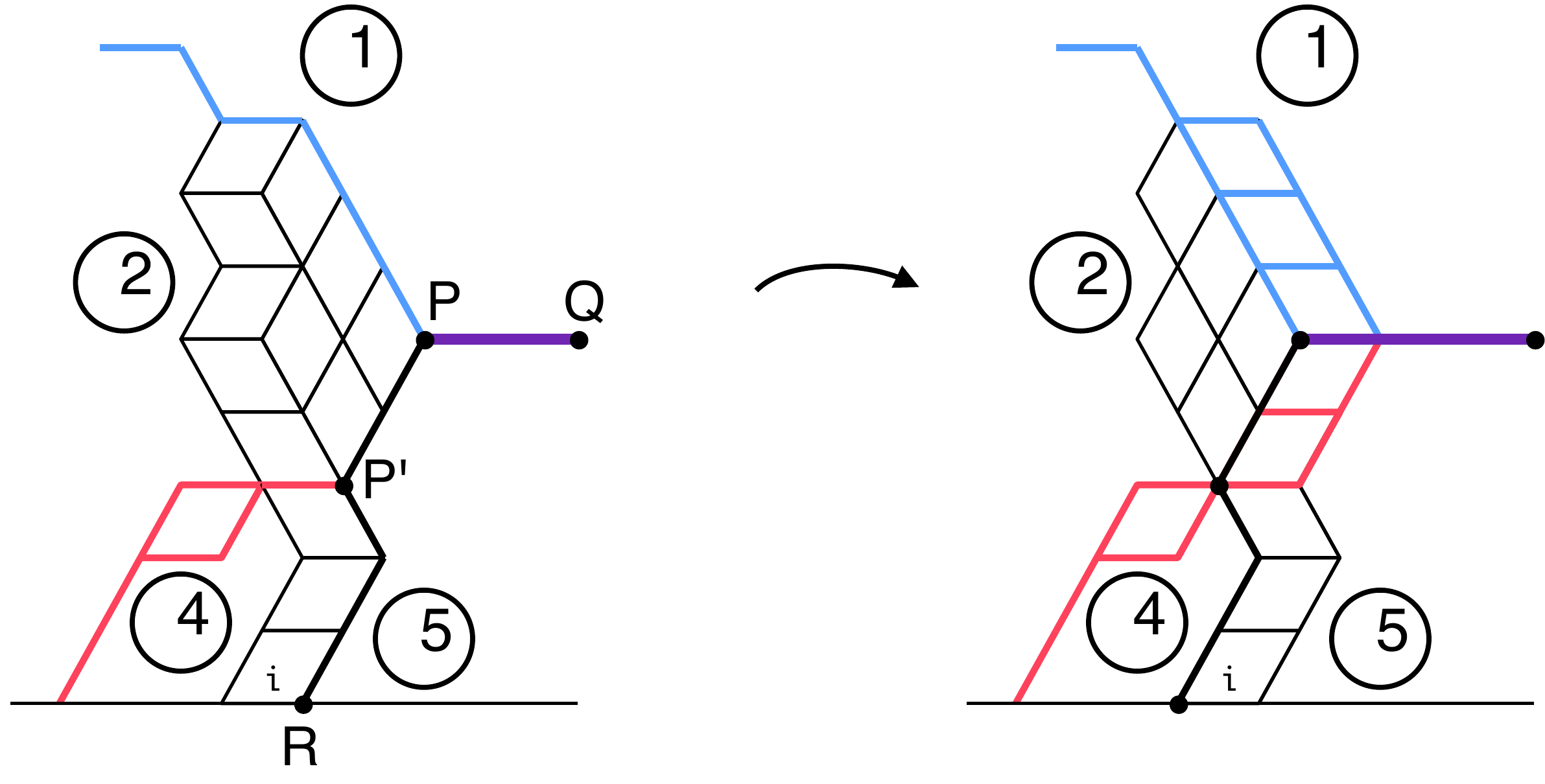}
	\caption{Triangle $i$ cannot be in region 1.}
	\label{fig:notin1}
\end{figure}

Now assume that triangle $i$ is in region $1$. Then color $i$ must enter and exit region $2$ by crossing horizontal edges. This forces all tiles in region $2$ and to the right of color $i$ to be vertical rhombi. But then we can retile this subregion by moving all horizontal rhombi to the right end of region $2$ and shifting all vertical rhombi one unit to the left (as illustrated in Figure~\ref{fig:notin1}). This results in a new tiling of the triangle of side length $n-1$ which has the same system of permutations, but  where the length of $PQ$ is larger, a contradiction. 

It follows that triangle $i$ is in region 2 as desired. This concludes the proof.
\end{proof}

\subsection{\textsf{Acyclic systems of permutations and triangle positions}}

The following is the main result of this section.

\begin{theo}[Acyclic systems of permutations and triangle positions]\label{theo_permstoholes}
In a lozenge tiling of a triangle, the acyclic system of permutations determines uniquely the numbered positions of the unit triangles. Conversely, the numbered positions of the triangles and one permutation of the system determine uniquely the other two permutations.
\end{theo}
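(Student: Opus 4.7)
I would prove the two claims in sequence, each by induction on $n$; the base case $n=1$ is trivial.

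For the first claim, that $\sigma=(u,v,w)$ determines the triangle positions, my plan is to identify the position of triangle $n$ directly from $\sigma$ and then invoke induction on the acyclic system $\sigma'$ obtained by deleting the label $n$. The positions of $n$ in $u,v,w$ determine three boundary points $D,E,F$ on the three edges of $n\Delta_2$. I would then argue that triangle $n$ must sit at the lattice point $M$ lying on a ``north'' ray through $D$, a ``southeast'' ray through $F$, and a ``southwest'' ray through $E$; this point is uniquely determined once the triangle positions in $(n-1)\Delta_2$ are known, which, by the induction hypothesis applied to $\sigma'$, depend only on $\sigma$. Removing triangle $n$ and the three paths $MD$, $ME$, $MF$ in the manner of the proof of Theorem~\ref{theo_characterization_dim2} then produces a tiling of $(n-1)\Delta_2$ whose triangle positions, combined with $M$, recover the triangle positions of the original tiling.

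For the second claim, that the positions plus one permutation determine the other two, I would proceed by pairwise analysis: a permutation is reconstructed from its pairwise comparisons, so it is enough to determine the relative order of every pair $\{i,j\}\subseteq[n]$ in $v$ and $w$ from the given data. For each such pair, acyclicity implies that at most two of $u,v,w$ agree on the order of $i,j$, while Remark~\ref{rem:crossing} implies that colors $i$ and $j$ cross exactly once in the colored dual, along the unique arm of color $i$ that separates the apex of $i$ from the apex of $j$. The positions of $i$ and $j$ in $n\Delta_2$, together with the relative order of $i,j$ in the known permutation $u$, identify which arm of color~$i$ is crossed, which in turn identifies the ``exceptional'' permutation among $\{u,v,w\}$ and hence the relative orders of $i$ and $j$ in $v$ and $w$.

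I expect the main difficulty to lie in Part~1: verifying that the intersection point $M$ of the three rays from $D, E, F$ is determined by $\sigma$ rather than by the auxiliary tiling used to locate it. The key will be to show that these rays depend only on the triangle positions of $(n-1)\Delta_2$ (and not on the particular rhombus arrangement), so that the induction hypothesis supplies the required uniqueness. A secondary technical issue in Part~2 is handling the edge cases where $i$ or $j$ sits in a degenerate position relative to the exit points of color $i$, which can be addressed by direct case analysis.
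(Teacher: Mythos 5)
Both halves of your plan rest on claims that fail as stated. The fatal one is in Part~2: the relative order of $i$ and $j$ in $v$ is \emph{not} a function of the positions of triangles $i$ and $j$ together with their order in $u$ --- it genuinely depends on where the \emph{other} triangles sit. Concretely, in the $(p,q)$-coordinates of Section~\ref{sec:proof_characterization_theorem_dim2} take $n=3$, $w=321$, $p=(1,2,3)$, and compare $q=(1,1,2)$ with $q=(1,2,2)$. Both arrays satisfy $1\le q_i\le i\le p_i\le n$ and come from genuine lozenge tilings; in both, triangle $1$ sits at $(1,1)$, triangle $3$ sits at $(3,2)$, $u=123$ and $w=321$, so the pair $\{1,3\}$ has the same positions and the same relative order in $u$ (and even in $w$). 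Yet Lemma~\ref{uv} gives $v=231$ in the first case and $v=132$ in the second: the order of $1$ and $3$ in $v$ flips, because only the position of triangle $2$ changed. So no pairwise analysis of the kind you propose can determine $G_{ij}$; the single crossing of colors $i$ and $j$ guaranteed by Remark~\ref{rem:crossing} occurs along arms whose shape is global data. Part~1 has a related gap, which you partly flag yourself: the arms of a color are broken lattice paths, not straight rays, so there is in general no lattice point lying simultaneously on straight rays through $D$, $E$, $F$ (if there were, the three positions of $n$ in $u,v,w$ would sum to a constant independent of $n$'s color, which already fails for the system $(1423,3124,4321)$ of Figure~\ref{Coloration of a triangle}, where the sums for colors $1$ and $2$ are $7$ and $9$). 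If instead you mean the paths $MD,ME,MF$ of the proof of Theorem~\ref{theo_characterization_dim2}, these are not unique and depend on the auxiliary tiling $T'$; showing that $M$ nevertheless depends only on $\sigma$ is essentially the whole first half of the theorem, so Part~1 as written restates the problem rather than solving it. A further unaddressed point in the induction is that contracting color $n$ shifts each remaining triangle $j$ by one unit in a direction depending on the source of $G_{jn}$, so the numbered positions in $T\backslash n$ are not simply those of $T$ with one point removed.

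The paper sidesteps both difficulties by making the dependence on all $n$ triangles explicit at once: the routing/wiring-diagram bijection encodes the positions as pairs $(p_i,q_i)$ with $1\le q_i\le i\le p_i\le n$, Lemma~\ref{uv} expresses $u$ and $v$ as ordered products of the cycles $(i,\dots,p_i)$ and $(i,\dots,q_i)$ (after normalizing $w=n\cdots 21$), and the unique factorization of Lemmas~\ref{perm.fact} and~\ref{perm.fact2} together with the inversion formula of Lemma~\ref{lem:positions_from_u_v} makes this correspondence bijective, which yields both directions of the theorem simultaneously. If you want to salvage an inductive argument for the first half, you should locate triangle $n$ from the full system $\sigma$ (for instance via the sources of the graphs $G_{nj}$, as in Theorem~\ref{theorem of positions}) rather than from the exit points $D,E,F$ alone.
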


\subsubsection{\textsf{A permutation factorization}} \label{sec:perm.fact}
We begin by introducing a way of factoring a permutation uniquely into a particular standard form. This factorization will play an important role in our analysis of lozenge tilings. We use cycle notation for permutations throughout this section. 

\begin{lem}\label{perm.fact}
Every permutation $u$ of $[n]$ can be written uniquely in the form
\[
	u	= (n,\dots ,p_n)\circ\dots \circ(2,3, \dots ,p_2)\circ(1,2, \dots ,p_1)
\]
for integers $p_1, \ldots, p_n$ such that $i \leq p_i \leq n$ for all $i$. 
\end{lem}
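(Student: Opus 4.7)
The plan is induction on $n$, built around the observation that every cycle $(i, i+1, \ldots, p_i)$ with $i \geq 2$ fixes the element $1 \in [n]$. So in the proposed product $u = (n, \ldots, p_n) \circ \cdots \circ (2, \ldots, p_2) \circ (1, \ldots, p_1)$, only the rightmost cycle can move $1$. Since $(1, 2, \ldots, p_1)$ sends $p_1 \mapsto 1$ (or fixes $1$ when $p_1 = 1$), and the remaining cycles fix $1$, the value $p_1$ is forced to equal $u^{-1}(1)$. This pins down $p_1$ uniquely and gives its existence simultaneously.

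Once $p_1 = u^{-1}(1)$ is determined, I would set $u' = u \circ (1, 2, \ldots, p_1)^{-1}$ and verify (a one-line check) that $u'(1) = 1$. Then $u'$ restricts to a permutation of $\{2, 3, \ldots, n\}$, which after shifting indices is a permutation of $[n-1]$. The inductive hypothesis produces a unique factorization of this restriction into cycles $(i, i+1, \ldots, p_i)$ for $i = 2, \ldots, n$ satisfying $i \leq p_i \leq n$, and reassembling gives the unique factorization of $u$ in the required form.

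The base case $n = 1$ is trivial: the only permutation is the identity, realized by the trivial cycle $(1)$ with $p_1 = 1$. As a sanity check, the number of admissible tuples is $\prod_{i=1}^n (n - i + 1) = n!$, matching $|S_n|$, so a bijection is of the correct size.

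I do not anticipate a substantial obstacle: the entire argument hinges on isolating the invariant that the cycles $c_i$ for $i \geq 2$ fix $1$, after which the induction is mechanical. The only place to be mildly careful is the boundary case $p_1 = 1$, where $(1, 2, \ldots, p_1) = (1)$ is interpreted as the identity; this is handled uniformly by the convention that $(i, i+1, \ldots, p_i)$ with $p_i = i$ is the trivial cycle on $\{i\}$, so that both the statement of the lemma and the computation of $u^{-1}(1)$ go through without a case split.
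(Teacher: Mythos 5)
Your proof is correct and takes essentially the same route as the paper's: both arguments observe that the cycles $(i,\ldots,p_i)$ with $i\ge 2$ fix $1$, so $p_1=u^{-1}(1)$ is forced, then compose with $(p_1,\ldots,2,1)$ to reduce to a permutation of $\{2,\ldots,n\}$ and apply induction. The counting check $\prod_{i}(n-i+1)=n!$ is a nice extra sanity test but not needed.
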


\begin{proof}
We proceed by induction on $n$. The case $n=1$ is trivial. Consider a permutation $\pi$ of $[n]$. For the equation to be true, we must have $u^{-1}(1)=p_1$, so $p_1$ is determined by $u$. Then we see that $u \circ (p_1,\ldots,2,1) $ leaves $1$ fixed, and can be regarded as a permutation of $[2, \ldots, n]$. By the induction hypothesis, it can be written uniquely as
\[
u \circ (p_1,\ldots,2,1) = (n,\ldots,p_n) \circ (n-1, \ldots, p_{n-1}) \circ \cdots \circ (2, \ldots, p_2),
\]
for $i \leq p_i \leq n$. This gives the unique such expression for $u$. 
\end{proof}

\begin{lem}\label{perm.fact2}
Similarly, every permutation $v$ of $[n]$ can be written uniquely in the form
\[
v = (1,\dots ,q_1)\circ\dots \circ(n-1, n-2, \dots ,q_{n-1})\circ(n, n-1, \dots ,q_n).\\
\]
for integers $q_1, \ldots, q_n$ such that $i \geq q_i \geq 1$ for all $i$.
\end{lem}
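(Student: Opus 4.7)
My plan is to prove Lemma \ref{perm.fact2} by following the same inductive strategy as the proof of Lemma \ref{perm.fact}. Proceed by induction on $n$; the base case $n = 1$ is immediate since $v$ is forced to be the identity with $q_1 = 1$. For the inductive step, I observe that in the claimed factorization every cycle other than the rightmost one, namely $(n, n-1, \ldots, q_n)$, involves only elements of $[n-1]$ and therefore fixes $n$. Since the rightmost cycle sends $q_n \mapsto n$, the equation forces $v(q_n) = n$, which pins down $q_n = v^{-1}(n)$ uniquely.

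Next, I would set $v' := v \circ (q_n, q_n+1, \ldots, n)$, which multiplies $v$ on the right by the inverse of the rightmost factor. A direct check shows that $v'(n) = n$, so $v'$ restricts to a permutation of $[n-1]$. Applying the induction hypothesis (to permutations of $[n-1]$) yields a unique factorization of $v'$ of the desired form with parameters $q_1, \ldots, q_{n-1}$ satisfying $i \geq q_i \geq 1$. Multiplying the resulting expression on the right by $(n, n-1, \ldots, q_n)$ then recovers the unique factorization of $v$.

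As an alternative and arguably cleaner route, one may deduce Lemma \ref{perm.fact2} directly from Lemma \ref{perm.fact} by conjugating with the reversal $w \in S_n$ defined by $w(i) = n+1-i$. The idea is to apply Lemma \ref{perm.fact} to $wvw$ to obtain unique parameters $p_1, \ldots, p_n$, and then to conjugate each factor using the identity $w\,(i, i+1, \ldots, p_i)\,w = (n+1-i, n-i, \ldots, n+1-p_i)$. This immediately converts the factorization of $wvw$ into one of $v$ of the shape required by Lemma \ref{perm.fact2}, with $q_{n+1-i} := n+1-p_i$; uniqueness transfers through this bijection on parameters. I do not foresee a real obstacle here, since the statement is entirely symmetric to Lemma \ref{perm.fact} and either argument dispatches the proof in a few lines.
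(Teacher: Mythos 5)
Your first argument is correct and is exactly the ``similar'' proof the paper intends: it mirrors the induction used for Lemma \ref{perm.fact}, peeling off the rightmost cycle (here determined by $q_n = v^{-1}(n)$) and applying the induction hypothesis to the resulting permutation of $[n-1]$. The conjugation-by-reversal argument is also valid and is a slightly slicker way to transfer Lemma \ref{perm.fact} directly, but both routes are essentially the intended one, so no issues.
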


The following lemma tells us how to compute the values of $p_1,\dots , p_n$ and $q_1,\dots , q_n$ in terms of the permutations $u$ and $v$.

\begin{lem}\label{lem:positions_from_u_v}
In the two lemmas above we have
\begin{eqnarray*}
p_k= k + | \{ \ell>k : u^{-1}(\ell) < u^{-1}(k) \}|  \\
q_k = k - | \{ \ell<k : v^{-1}(\ell) > v^{-1}(k) \}| 
\end{eqnarray*}
\end{lem}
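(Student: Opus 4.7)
My plan is to prove the formula for $p_k$ by induction on $n$, closely paralleling the inductive construction already used in the proof of Lemma~\ref{perm.fact}, and then to deduce the formula for $q_k$ from it by conjugating with the longest permutation $w_0(i) = n+1-i$.

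The base case $n = 1$ is vacuous. In the inductive step I would first handle $k = 1$ separately: the proof of Lemma~\ref{perm.fact} already gives $p_1 = u^{-1}(1)$, and since every entry of $u$ lies in $[n]$, every element appearing before $1$ in the one-line notation of $u$ is automatically greater than $1$, so $|\{\ell > 1 : u^{-1}(\ell) < u^{-1}(1)\}| = u^{-1}(1) - 1 = p_1 - 1$, matching the claim. For $k \geq 2$, I would work with $u' := u \circ (p_1, p_1-1, \ldots, 2, 1)$, the permutation whose factorization constants (when viewed as a permutation of $\{2,\ldots,n\}$) are precisely $p_2, \ldots, p_n$. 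The crux is to check explicitly that the one-line notation of $u'$ is obtained from that of $u$ by removing the entry $1$ from position $p_1$ and inserting it at position $1$, shifting the first $p_1 - 1$ entries one place to the right. This local surgery moves only the $1$, so it preserves the relative order of every pair of elements of $\{2, \ldots, n\}$: for all $k, \ell \geq 2$ we have $u^{-1}(\ell) < u^{-1}(k) \iff (u')^{-1}(\ell) < (u')^{-1}(k)$. Feeding this into the inductive hypothesis applied to $u'$ yields the desired formula for $p_k$.

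For the $q_k$ formula I would avoid redoing the entire induction by exploiting a symmetry: conjugation by $w_0$ turns each cycle $(i, i-1, \ldots, q_i)$ appearing in the factorization of Lemma~\ref{perm.fact2} into $(n+1-i, n+2-i, \ldots, n+1-q_i)$ and reverses the order of multiplication, producing a factorization of $u := w_0 v w_0$ of the form in Lemma~\ref{perm.fact} with $p_k = n+1-q_{n+1-k}$. Combining this identification with $u^{-1}(\ell) = n+1 - v^{-1}(n+1-\ell)$ and performing the substitution $k \mapsto n+1-k$, $\ell \mapsto n+1-\ell$ in the already-established formula for $p_k$ will deliver the stated expression for $q_k$.

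The whole argument is essentially bookkeeping, and I do not anticipate any genuine difficulty. The point that most deserves care is the explicit description of the one-line notation of $u'$ in terms of that of $u$, since this is what transports the inductive hypothesis on $\{2, \ldots, n\}$ to the claim on $[n]$; once this is articulated cleanly, both the completion of the induction and the symmetry step for $q_k$ reduce to routine substitutions.
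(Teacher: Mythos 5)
Your argument is correct and is essentially the paper's proof in inductive clothing: your $(u')^{-1}=(1,2,\dots,p_1)\circ u^{-1}$ is exactly the paper's partial product $\pi_2$, and your one-line-notation surgery (moving the entry $1$ from position $p_1$ to position $1$) is the same relative-order-preservation observation the paper makes when passing from $\pi_{k+1}$ to $\pi_k$. The only divergence is cosmetic: you obtain the $q_k$ formula by conjugating with $w_0$ where the paper says ``analogous,'' which works, though note that conjugation \emph{preserves} the order of the factors --- it is the relabeling $i\mapsto n+1-i$ that puts the product into the form of Lemma~\ref{perm.fact} and yields $p_j=n+1-q_{n+1-j}$.
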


\begin{proof}
We prove the result for $p_k$; the proof for $q_k$ is analogous. 
The inverse of $u$ is
\[
	u^{-1}	= (p_1,\dots ,2,1)\circ (p_2,\dots , 3,2) \circ \cdots \circ (p_n,\dots ,n).
\]
Let 
$
\pi_k := (p_k, \ldots, k)  \circ (p_{k+1}, \ldots, k+1) \circ  \dots \circ (p_n, \ldots, n),
$
 a permutation of $\{k,\dots , n\}$. Note that, to obtain $\pi_k$ from $\pi_{k+1}$ (which is a permutation of $\{k+1, \ldots, n\}$), we simply insert $p_k$ at the beginning of the permutation, and substract $1$ from all entries less than or equal to it. For instance, if $(p_1, \ldots, p_6) = (3,2,4,5,6,6)$, then the permutations $\pi_6, \ldots, \pi_1$ are $6, 65, 564, 4563, 24563, 314562$, respectively. In particular, the relative order of $\pi_{k+1}(a)$ and $\pi_{k+1}(b)$ (where $a, b \geq k+1$) is preserved in $\pi_k$.
 
It follows that, for $\ell>k$, we have $ u^{-1}(\ell) = \pi_1(\ell) < \pi_1(k) = u^{-1}(k) \, \textrm{ if and only if } \, \pi_k(\ell) < \pi_k(k)$.
But $\pi_k(k)=p_k$, so 
\begin{equation*}
	 | \{ \ell>k : u^{-1}(\ell) < u^{-1}(k) \}| = | \{ \ell>k : \pi_k(\ell) < \pi_k(k)) \}| = p_k - k,
\end{equation*}
as desired.
\end{proof}

\subsubsection{\textsf{Tilings and wiring diagrams}}\label{sec:proof_characterization_theorem_dim2} 

The possible positions of the triangles in a lozenge 
tiling of $n\D_2$ naturally correspond to triples in the triangular array of
non-negative natural numbers $(x_1,x_2,x_3)$ whose sum is equal to $n-1$. The unit triangles at the corners $A, B,$ and $C$ have coordinates $(n-1,0,0)$, $(0, n-1,0)$, and $(0,0,n-1)$, respectively. We denote by $G_n$ the directed graph whose vertices are the triples
in this triangular array, and where each node which is not in the bottom row
is connected to the two nodes directly below it.  
There is a natural bijection between the lozenge tilings of $n\D_2$
and the vertex-disjoint routings to the bottom $n$ vertices of the graph $G_n$: 
simply place one rhombus over each edge in the routing, 
one vertical rhombus over each isolated vertex, and one 
triangle over the top vertex of each path in the routing  \cite{Luby}.
See Figure \ref{fig1_routing} for an example.

\begin{figure}[h]
	\centering
	\includegraphics[width=0.9\textwidth]{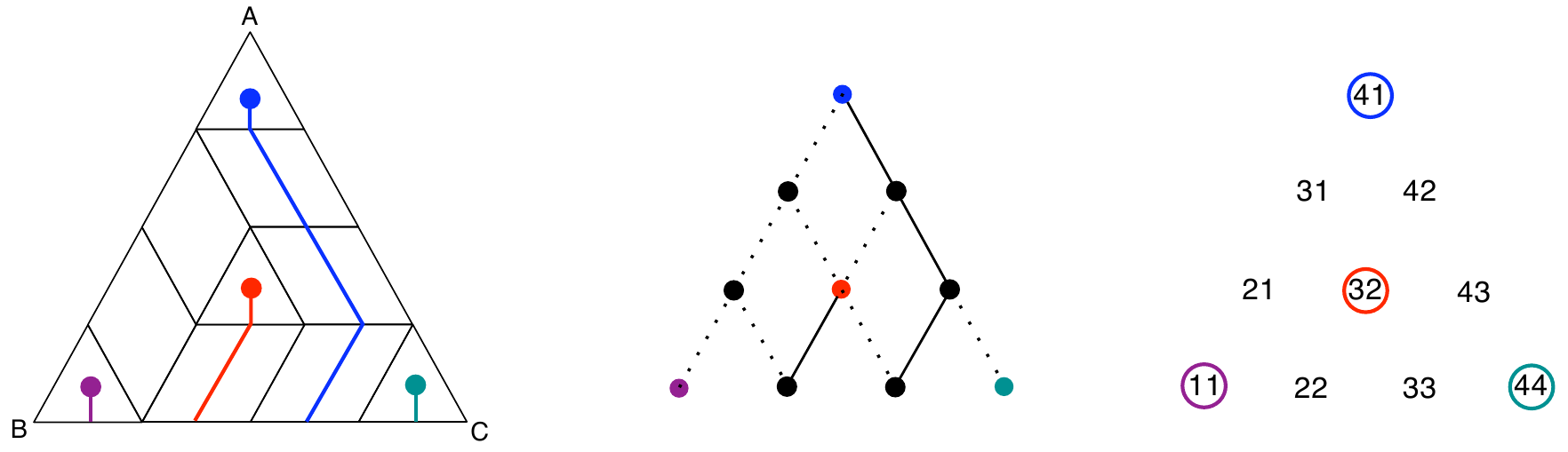}
	\caption{Left: A tiling of $4\D_2$. 
	Middle: The corresponding routing of $G_4$.  
	Right: The coordinates $(p,q)$ of the vertices of $G_4$.}
	\label{fig1_routing}
\end{figure}

 We perform a change of coordinates and label the nodes of $G_n$ with pairs of numbers $(p,q)$,
where $p=x_1+x_3+1$ and $q=x_3+1$. The $p$ and $q$ coordinates range from $1$ to $n$, and increase in the northeast and southeast directions, respectively.
Figure \ref{fig1_routing} shows the coordinates $(p,q)$ of the graph $G_4$. 
Given a lozenge tiling $T$, 
and the corresponding routing of $G_n$, 
number the positions of the unit triangles from $1$ up to $n$, such that
the vertex of the $i$th triangle is routed to the vertex with $(p,q)$-coordinate
equal to $(i,i)$. 
Let $(p_i, q_i)$ be the position of the $i$th triangle in $T$. Since the triangles of $T$ are spread out, we have $1 \leq q_i \leq i \leq p_i \leq n$ for all $i$. We now show that knowing the positions $(p_i, q_i)$ is equivalent to knowing the system of permutations $(u, v, w)$.

\begin{lem}\label{uv}
The ordered list of positions $(p_i, q_i)$ of the triangles in a lozenge tiling determines the permutations $u$ and $v$ as follows:
\[
\begin{array}{ccl}
	u	&=& (n,\dots ,p_n)\circ\dots \circ(2,\dots ,p_2)\circ(1,\dots ,p_1)\\
	v	&=& (1,\dots ,q_1)\circ\dots \circ(n-1,\dots ,q_{n-1})\circ(n,\dots ,q_n).\\
	w	&=& n\ldots321
\end{array}
\]
\end{lem}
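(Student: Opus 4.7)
The plan is to handle the three formulas separately, treating $w$ directly and reducing the $u$ and $v$ formulas to a combinatorial statement about the routing via the factorization machinery of Lemmas \ref{perm.fact}, \ref{perm.fact2}, and \ref{lem:positions_from_u_v}.

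First, the formula $w = n\cdots 21$ is immediate from the numbering convention. By construction, the $i$th triangle is the one whose path in the routing lands on the bottom vertex with $(p,q)$-coordinate $(i,i)$. Since the bottom row of vertices of $G_n$ corresponds exactly to the edge $BC$ and the coordinate $q$ increases from $B$ to $C$, the colors appear on $BC$ in the order $1, 2, \ldots, n$. Reading clockwise from the lower-left vertex $B$, one traverses $B \to A$, then $A \to C$, and finally $C \to B$, so on the third edge the colors appear in the reverse order $n, n-1, \ldots, 1$.

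For the formulas for $u$ and $v$, by the uniqueness of the factorizations in Lemmas \ref{perm.fact} and \ref{perm.fact2}, it suffices to verify that the exponents appearing in the standard factorizations of $u$ and $v$ coincide with the coordinates $p_k$ and $q_k$ of the $k$th triangle. By Lemma \ref{lem:positions_from_u_v}, this reduces to the two counting identities
\[
    p_k - k = |\{\ell > k : u^{-1}(\ell) < u^{-1}(k)\}|, \qquad k - q_k = |\{\ell < k : v^{-1}(\ell) > v^{-1}(k)\}|.
\]
Each identity asserts that a geometric quantity on the left (the ``horizontal displacement'' of path $k$ in the routing) equals an inversion count on the right (the number of colors that appear on the wrong side of color $k$ on the corresponding edge of the triangle). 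I would prove the first identity; the second is entirely symmetric.

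To prove the first identity I would use the routing picture. Triangle $k$ is routed from $(p_k, q_k)$ down to $(k,k)$, and each step of the routing either decreases $p$ by one (a SW-step) or increases $q$ by one (a SE-step). Hence path $k$ contains exactly $p_k - k$ SW-steps. I would then establish the key local claim: in the non-crossing routing of $G_n$, each SW-step of path $k$ corresponds bijectively to a unique path $\ell > k$ that passes to the southeast of path $k$ at that step, and conversely these are precisely the colors $\ell > k$ whose dual branch meets edge $BA$ before that of color $k$. Because any two colors cross exactly once in the colored dual (Remark \ref{rem:crossing}), the correspondence is well-posed and bijective, which gives the desired equality.

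The main obstacle is the local step: identifying SW/SE moves of the routing with the order relation of colors on the perimeter edges. I would carry this out by a local analysis at each rhombus and each triangle, checking that the three rhombus orientations (and the unit triangle) produce compatible local crossing data between the routing and the colored dual. Once this local compatibility is verified, the global inversion count follows by summing over the steps of path $k$, completing the proof.
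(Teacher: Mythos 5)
Your argument is correct in outline, but it takes a genuinely different route from the paper's. The paper proves the $u$-formula directly: it forms the wiring diagram of pseudolines $L(i)=R_A(i)\cup R_C(i)$ running from edge $BC$ (where the colors read $1,\dots,n$) to edge $AB$ (where they read $u$), linearly orders the crossings by walking up $R_A(n),R_A(n-1),\dots,R_A(1)$, and observes that the crossings on $R_A(i)$ are the consecutive transpositions $s_i,\dots,s_{p_i-1}$, whose product is the cycle $(i,\dots,p_i)$; no appeal to Lemmas \ref{perm.fact}--\ref{lem:positions_from_u_v} is needed at this stage. You instead invoke the uniqueness of the factorization together with Lemma \ref{lem:positions_from_u_v} to reduce everything to the single counting identity $p_k-k=|\{\ell>k: u^{-1}(\ell)<u^{-1}(k)\}|$, which you then match against the $p_k-k$ southwest steps of path $k$ in the routing. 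This is a legitimate trade: you only need to \emph{count} the crossings on each downward ray, whereas the paper also needs their \emph{order} along the ray to read off a reduced word; in exchange you lean on the algebraic lemmas, which the paper only uses later for the converse direction in Theorem \ref{theo_permstoholes}. One caution about your ``local analysis'': the orientation of a rhombus over a SW-step does determine, via the direction of the second Minkowski summand, that the transversal color $\ell$ meets it along its branch $R_C(\ell)$ (the one ending on edge $AB$), but it does \emph{not} locally determine on which side triangle $\ell$ sits, hence neither $\ell>k$ nor $u^{-1}(\ell)<u^{-1}(k)$. To finish you must pass to the global picture: since $k$ and $\ell$ cross exactly once (Remark \ref{rem:crossing}), the branch $R_C(\ell)$ crossing $R_A(k)$ forces triangle $\ell$ and the endpoint of $R_A(\ell)$ into the $C$-region of color $k$ (giving $\ell>k$) and the endpoint of $R_C(\ell)$ into its $B$-region (giving $u^{-1}(\ell)<u^{-1}(k)$); acyclicity, already established in the forward direction of Theorem \ref{theo_characterization_dim2}, gives the converse inclusion. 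You do gesture at exactly this via Remark \ref{rem:crossing}, so I regard the step as fillable rather than a gap, but it is the real content of the proof and deserves to be written out, just as the corresponding assertion is the real content of the paper's wiring-diagram argument.
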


\begin{proof}
The equation $w=n \ldots 321$ holds by assumption. We prove the formula for $u$; the proof for $v$ is analogous. The color $i$ splits naturally into three broken rays $R_A(i), R_B(i)$, and $R_C(i)$ centered at the $i$th triangle and pointing away from vertices $A, B,$ and $C$ respectively. Consider the pseudolines $L(i) = R_A(i) \cup R_C(i)$ for $1 \leq i \leq n$. We can regard this pseudoline arrangement as a wiring diagram for the permutation $u$. 

\begin{figure}[h]
	\centering
	\includegraphics[width=0.45\textwidth]{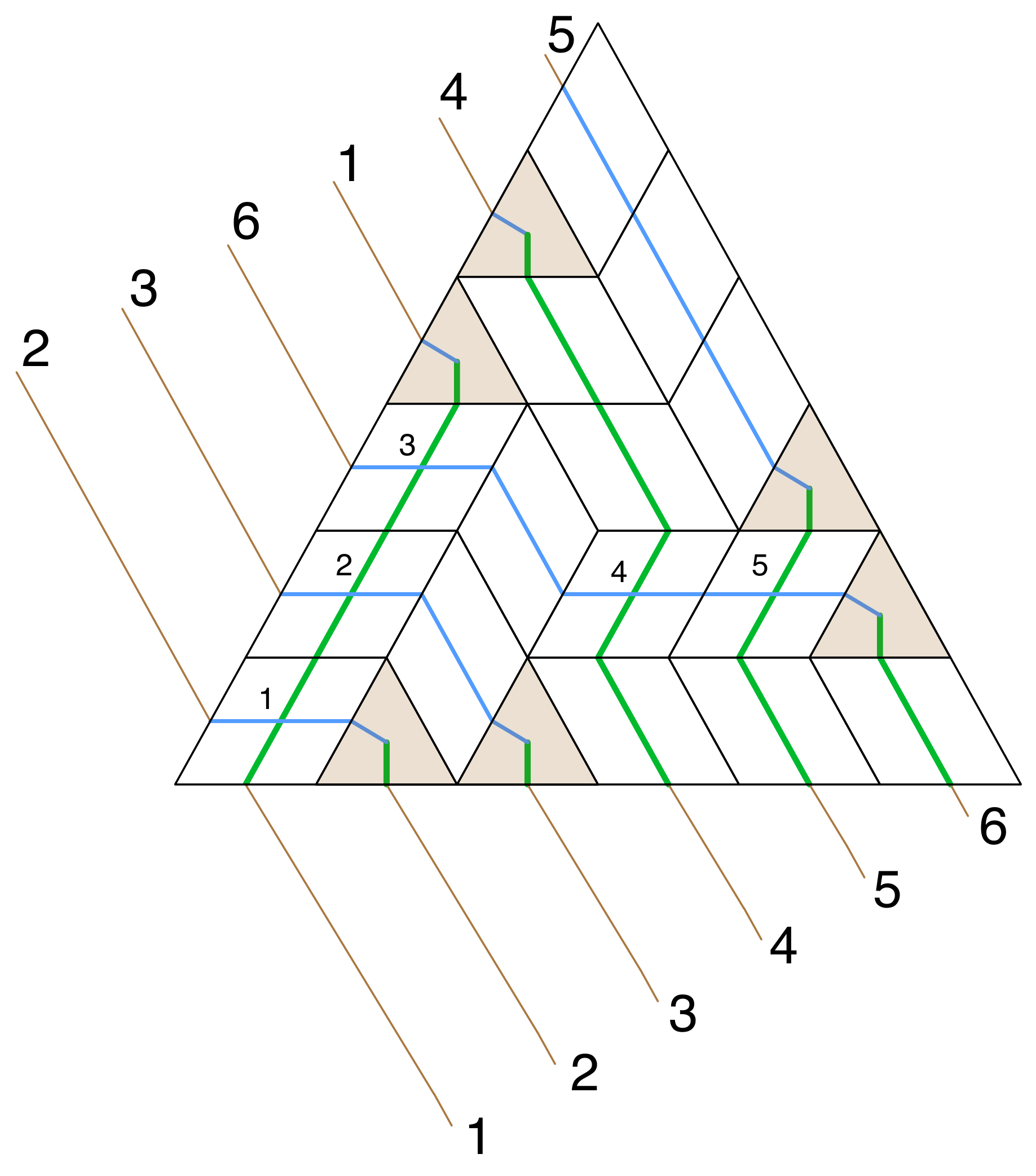}
	\caption{A tiling with $p=(4,2,3,5,6,6)$. We have $u=236145 = ()(s_5)(s_4)()()(s_1s_2s_3) = ({\bf 6})(5{\bf 6} )(4{\bf 5})({\bf 3})({\bf 2})(123{\bf 4})$.
In the other direction we have $q=(1,2,3,1,4,5)$ and $v=412563 =  ()()()(s_3s_2s_1)(s_4)(s_5) = (1{\bf 1})(2{\bf 2})(3{\bf 3})(432{\bf 1})(5{\bf 4})(6{\bf 5})$.}
	\label{fig:routing}
\end{figure}

To express $u$ as a product of transpositions, it suffices to linearly order the crossings from bottom to top, in an order compatible with the partial order given by the wiring diagram, and multiply them left to right. One way of doing it is to proceed up the ray $R_A(n)$, then up the ray $R_A(n-1)$, and so on up to the ray  $R_A(1)$, recording every crossing that we see along the way. This procedure lists every crossing exactly once, and the crossings along ray $R_A(i)$ correspond to the transpositions $s_i, s_{i+1}, \ldots, s_{p_i-1}$ (where $s_j=(j, j+1)$) which multiply to the cycle $(i, \dots, p_i)$.
\end{proof}

Notice that in Lemma \ref{uv}, we need the \textbf{ordered} list of positions of the triangles to determine the system of permutations. Figure \ref{fig:unordered} illustrates this.

\begin{figure}[h]
	\centering
	\includegraphics[width=0.4\textwidth]{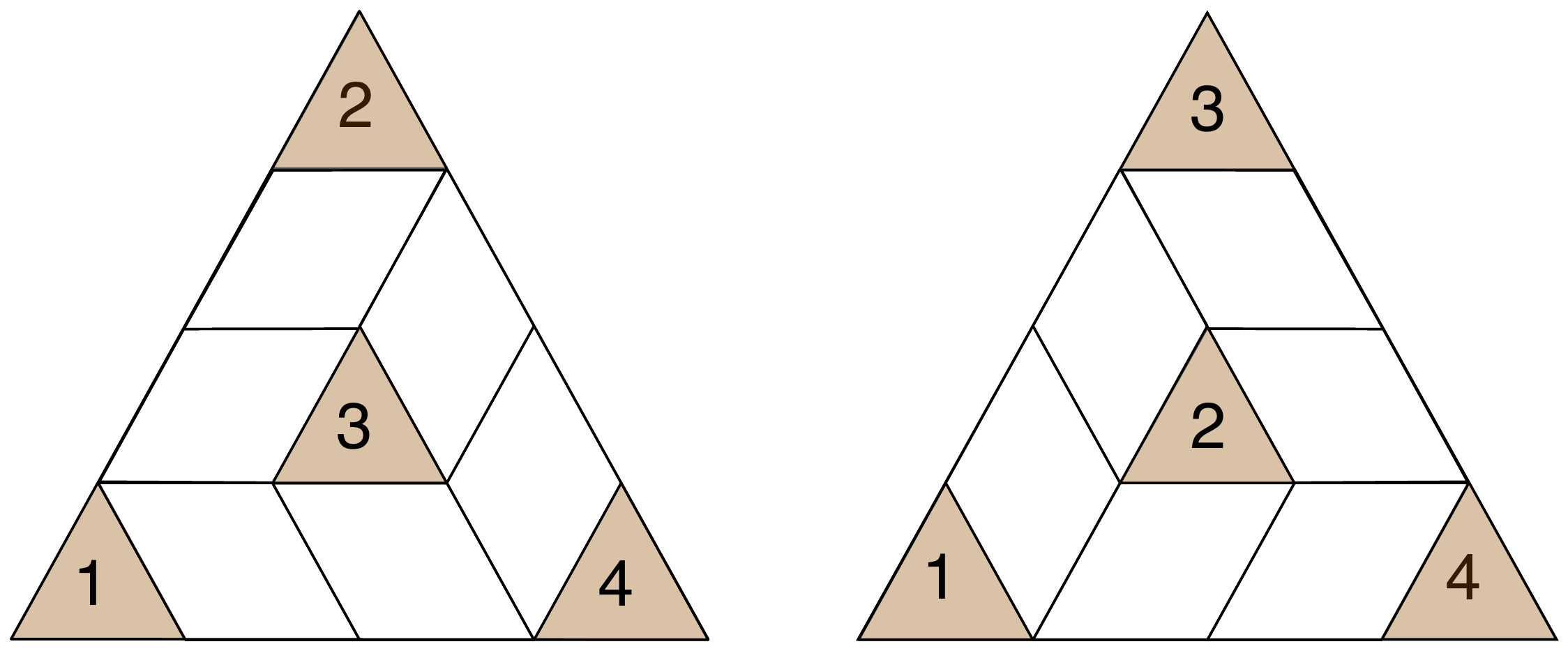}
	\caption{Two tilings with the same unordered set of triangles and different systems of permutations.}
	\label{fig:unordered}
\end{figure}

We have now done all the work to prove the main result of this section.

\begin{proof}[Proof of Theorem \ref{theo_permstoholes}]
If we are given the numbered triangle positions and a permutation of the system, we can assume without loss of generality that the given permutation is $w=n\dots 1$. Lemma \ref{uv} then tells us how to obtain the two remaining permutations $u$ and $v$. Moreover, given that $1 \leq q_i \leq i \leq p_i \leq n$, Lemmas \ref{perm.fact} and \ref{perm.fact2} imply that this procedure is reversible, and Lemma \ref{lem:positions_from_u_v} gives us an explicit way of computing the triangle positions in terms of $u$ and $v$. 
\end{proof}

\subsubsection{\textsf{From acyclic systems to triangle positions: another description}}

Let $T$ be a lozenge tiling of $n\Delta_2$ and $\sigma$ be its corresponding acyclic system of permutations. In addition to Lemmas \ref{lem:positions_from_u_v} \ref{uv}, we now present a different way of computing the triangle positions of $T$ in terms of $\sigma$.

\begin{figure}[h]
	\centering
	\includegraphics[width=1\textwidth]{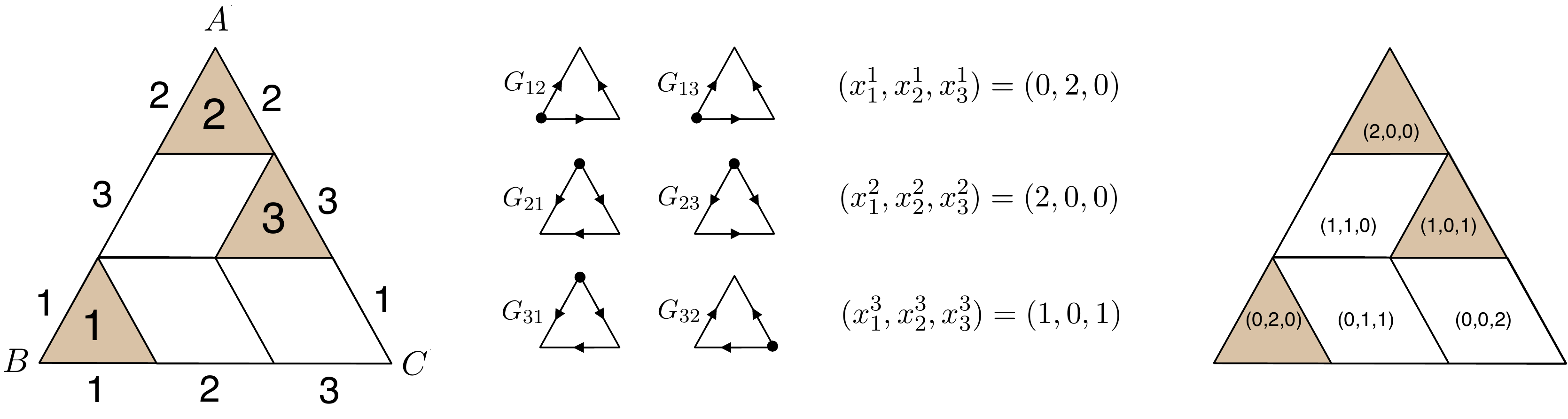}
	\caption{Obtaining the triangle positions from the system of permutations.}
	\label{fig:positions_dim2}
\end{figure}

As before, we identify the positions of the unit triangles in $T$ with triples in the triangular array of non-negative natural numbers $(x_1,x_2,x_3)$ whose sum is equal to $n-1$. For $1\leq i\neq j \leq n$ define the \emph{directed graph} $G_{ij}$ on the triangle $ABC$; we orient edge $e$ according to the order in which $i$ and $j$ appear on $e$ in the system of permutations $\sigma$.

Since the system of permutations is acyclic, each graph $G_{ij}$ is acyclic and has a unique source. The position $(x_1^i,x_2^i,x_3^i)$ of the $i$-th triangle is given by:
\begin{eqnarray*}
x_1^i=|\{j\neq i : \text{$A$ is the unique source of } G_{ij}\}|, \\
x_2^i=|\{j\neq i : \text{$B$ is the unique source of } G_{ij}\}|,\\
x_3^i=|\{j\neq i : \text{$C$ is the unique source of } G_{ij}\}|.
\end{eqnarray*}

We illustrate this in an example in Figure \ref{fig:positions_dim2}. This result is  proved in greater generality in Section \ref{sec:simplex_positions}.

\section{\textsf{The Acyclic System Conjecture}}\label{sec:acyclic_conjecture}

The main goal of this section is to introduce the concept of the system of permutations of a higher dimensional subdivision of a simplex. We prove that the system of permutations of a subdivision is acyclic. 
Motivated by Theorem~\ref{theo_characterization_dim2}, we conjectured that the converse statement holds as well (The Acyclic System Conjecture~\ref{conj_acyclic}). However, after this paper was submitted for publication, the conjecture was disproved by Francisco Santos in~\cite{Sa2012}.

Let $S$ be a subdivision on $n\D_{d-1}$.
As mentioned in Remark \ref{rem:notation}, in order to prevent confusion we will denote
indices in the set $[n]$ by the letters $i,j,k,\ell$, and indices in the set $[d]$ by the letters~$a,b$.
We also denote the vertices of the simplex $\D_{d-1}$ by
$w_1,\dots  , w_d$, the vertices of $n\D_{d-1}$ by
$nw_1,\dots  , nw_d$, the vertices of $\Delta_{n-1} $ by $v_1,\dots ,v_n$, 
and the vertices of d$\Delta_{n-1} $ by $dv_1,\dots ,dv_n$. 

The restriction $S |_{nw_aw_b}$ of the subdivision $S$ to the
edge $nw_aw_b$ is the subdivision of the segment $nw_aw_b$ given
by the Minkowski sums $S_1+\dots  + S_n \in S$ for which
$S_i\subset \{w_a,w_b, w_aw_b \}$ for all $i=1,\dots ,n$.

\begin{definition}[The permutation of an edge]
Since $S |_{nw_aw_b}$ is a subdivision, for each $i \in [n]$ there is a unique cell having $i-1$ summands equal to $w_b$, $n-i$ summands equal to $w_a$, and one summand (which we denote $S_{\sigma_{ab}(i)}$) equal to $w_aw_b$. It is easy to see that $\sigma_{ab}$ is a permutation of $[n]$, which we call \emph{the permutation of the edge $nw_aw_b$}. 
(Note that $\sigma_{ab}$ is the reverse of $\sigma_{ba}$ for any $1 \leq a \neq b \leq d$.) 
\end{definition}

\begin{remark} \label{rem:edge}
It is worth describing more explicitly the subdivision along each edge. As we traverse the edge $nw_aw_b$ from the vertex $nw_a$ to $nw_b$, the first edge of $S$ that we encounter has the form $w_a+\cdots+w_a + w_aw_b + w_a + \cdots + w_a$. Each subsequent edge is obtained from the previous one by converting the summand $w_aw_b$ into $w_b$ and converting one of the summands $w_a$ into $w_aw_b$. The permutation $\sigma_{ab}$ tells us the order in which the summands $w_a$ are converted to $w_aw_b$ (and then to $w_b$).
\end{remark}

\begin{definition}[The system of permutations]
The \emph{system of permutations of a fine mixed subdivision} 
$S$ of $n\D_{d-1}$ is the collection 
$\s(S)=(\sigma_{ab})_{1\le a \neq b \le d}$
of permutations 
$\sigma_{ab}$
of the edges $nw_aw_b$.
\end{definition}

\begin{example}
Consider the subdivision $S$ of $3\D_2$
given on the left hand side of Figure \ref{fine_mixed_voronoi},
with the small difference that we now call the vertices 
$3w_1, 3w_2$, and $3w_3$ instead of $A, B$, and $C$.
Writing only the one dimensional cells of the subdivision
restricted to the edges of the triangles we have:
\[ 
S|_{3w_1w_2} = S|_{3w_2w_1} = 
\{  
w_1w_2+w_1+w_1,\ w_2+w_1+w_1w_2,\ w_2+w_1w_2+w_2 
\} 
\]
\[ 
S|_{3w_2w_3} = S|_{3w_3w_2} = 
\{  
w_2+w_2w_3+w_2,\ w_2w_3+w_3+w_2,\ w_3+w_3+w_2w_3
\} 
\]
\[ 
S|_{3w_3w_1} = S|_{3w_1w_3} = 
\{  
w_3+w_1w_3+w_3,\ w_3+w_1+w_1w_3,\ w_1w_3+w_1+w_1.
\} 
\]
\\
The system of permutations $\s(S)=(\sigma_{ab})_{1\le a \neq b \le 3}$
is then given by 
\[
\begin{array}{ccccc}
 \sigma_{12}=132 &&&&   \sigma_{21}=231   \\
 \sigma_{23}=213 &&&&   \sigma_{32}=312   \\
 \sigma_{31}=231 &&&&   \sigma_{13}=132.   
\end{array}
\]
Notice that this system coincides with the restriction of the coloring of $S$ 
to the edges of the triangle. 
\end{example}

\begin{definition}
A \emph{system of permutations} on the edges of $n\D_{d-1}$
is a collection $\s=(\sigma_{ab})_{1\le a \neq b \le d}$
of permutations $\sigma_{ab}$ of $[n]$
such that
$\sigma_{ab}$ is the reverse of $\sigma_{ba}$ for all $a, b$. 
For each pair $1\le i \neq j  \le n$ we define the 
\emph{directed graph} $G_{ij}(\s)$ of $\s$
as the complete graph on $[d]$, where edge $ab$ is directed
$a\rightarrow b$ if and only if the permutation $\sigma_{ab}$
is of the form $\dots  i \dots  j \dots $. 
We say that a system of permutations $\s$ is \emph{acyclic}
if and only if all the graphs $G_{ij}(\s)$ are acyclic.
\end{definition}

\noindent In other words, a system of permutations 
on the edges of a simplex is acyclic if and only if there is no closed walk along the edges such that the permutation on every directed edge of the walk has the form $\dots  i \dots  j \dots$ for some $i$ and $j$.

\begin{theo}\label{theo_acyclic}
Let $S$ be a fine mixed subdivision of $n\Delta_{d-1}$, and $\s(S)$ be 
the corresponding system of permutations. Then $\s(S)$ is acyclic. 
\end{theo}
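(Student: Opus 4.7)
My plan is to reduce this to the two-dimensional Acyclic System Theorem~\ref{theo_characterization_dim2} by restricting the subdivision to the triangular $2$-faces of $n\Delta_{d-1}$, and by exploiting the fact that tournaments are acyclic if and only if they contain no directed $3$-cycle.

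First I observe that, for every pair $i \neq j$ in $[n]$, the graph $G_{ij}(\sigma(S))$ is a tournament on $[d]$: for any two vertices $a, b$ the permutation $\sigma_{ab}$ places $i$ either before or after $j$, so the edge $ab$ receives exactly one orientation. A standard fact from tournament theory states that a tournament is acyclic if and only if it contains no directed $3$-cycle (a shortest directed cycle of length $\geq 4$ can always be shortened using the edge between two alternate vertices). Hence it suffices to show that for every triple $\{a,b,c\} \subseteq [d]$ and every pair $i \neq j \in [n]$, it is impossible for all three permutations $\sigma_{ab}, \sigma_{bc}, \sigma_{ca}$ to contain $i$ before $j$.

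Next I would establish that, for any triple $\{a,b,c\}$, the cells of $S$ contained in the triangular $2$-face $F = n w_a w_b w_c$ form a fine mixed subdivision of $F$, viewed as a copy of $n\Delta_2$. The fact that these cells subdivide $F$ is a general property of polyhedral subdivisions. Moreover, $F$ admits the Minkowski decomposition $(w_a w_b w_c) + \cdots + (w_a w_b w_c)$ as a sum of $n$ copies of the subsimplex $w_a w_b w_c \subset \Delta_{d-1}$; by the mixed subdivision property, any fine mixed cell $B_1 + \cdots + B_n \subseteq F$ must have each summand $B_i$ contained in $w_a w_b w_c$, so each $B_i$ is a face of the $2$-simplex $w_a w_b w_c$ and the restriction is a fine mixed subdivision of $n\Delta_2$. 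By inspection of Definition~\ref{rem:edge}, the three edge permutations of this restricted subdivision are exactly $\sigma_{ab}, \sigma_{bc}, \sigma_{ca}$, since each edge of $F$ is already an edge of $n\Delta_{d-1}$.

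Finally, if $G_{ij}(\sigma(S))$ contained a directed $3$-cycle $a \to b \to c \to a$, the three edge permutations of the restricted lozenge tiling of $F$ would all contain $i$ before $j$, yielding a non-acyclic system of permutations on a lozenge tiling and contradicting the forward direction of Theorem~\ref{theo_characterization_dim2}. The main technical point I expect to need to justify carefully is the restriction claim, namely that restricting a fine mixed subdivision of $n\Delta_{d-1}$ to a boundary face yields a fine mixed subdivision of that face whose system of permutations is the restriction of the original; once this is in hand, the rest of the argument is a formal tournament-theoretic reduction. A parallel route would bypass Theorem~\ref{theo_characterization_dim2} entirely and use the higher-dimensional analogue of Remark~\ref{rem:crossing} (that each pair of colors in the colored dual meets exactly once) to forbid directed $3$-cycles directly, but the $2$-face reduction is conceptually cleaner and reuses work already done.
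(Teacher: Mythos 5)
Your proposal is correct and follows essentially the same route as the paper: reduce acyclicity of the tournament $G_{ij}(\s)$ to the absence of directed $3$-cycles, restrict $S$ to each triangular $2$-face $nw_aw_bw_c$ to obtain a lozenge tiling whose edge permutations are $\sigma_{ab},\sigma_{bc},\sigma_{ca}$, and invoke the forward direction of Theorem~\ref{theo_characterization_dim2}. The only difference is that you spell out the restriction-to-a-face step, which the paper treats as immediate.
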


\begin{proof}
The cases $d=1, 2$ are trivial. The case $d=3$ was shown in Theorem \ref{theo_characterization_dim2}. 
For $d > 3$, notice that an orientation of the complete graph $K_d$ is acyclic if and only if every triangle $w_aw_bw_c$ is acyclic. But the orientation of triangle $w_aw_bw_c$ is given by the subdivision $S|_{nw_aw_bw_c}$, and so it is acyclic by Theorem \ref{theo_characterization_dim2}.
\end{proof}
We conjectured that the converse also holds:

\begin{perms_conj}\label{conj_acyclic}
Any acyclic system of permutations on the edges of the simplex
$n\Delta_{d-1}$ is achievable as the system of 
permutations of a fine mixed subdivision.
\end{perms_conj}

Theorem \ref{theo_characterization_dim2} says  that
the Acyclic System Conjecture \ref{conj_acyclic}  is true for $d=3$. 
However, Francisco Santos recently disproved this conjecture in the general case~\cite{Sa2012}. He constructed an acyclic system of permutations on the edges of $5\Delta_3$ that is not achievable as the system of permutations of a fine mixed subdivision.

\section{\textsf{Duality, deletion and contraction}}\label{section dual}
Before we continue extending the results of Section \ref{section system 2D} from two dimensions to higher dimensions we need some simple but useful machinery.
This section introduces the notion of duality, deletion and contraction 
for acyclic systems of permutations on the edges of a simplex.
We show that our definitions are compatible with the previously known
notions of duality, deletion and contraction for subdivisions 
\cite{[Ardila-Develin],[Cayley]}. Most of the results in this section follow
easily from the definitions, and we omit their proofs.

\subsection{\textsf{Duality for subdivisions}}\label{duality} 
%

There is a natural notion of duality between subdivisions of $n \Delta_{d-1}$ and subdivisions of $d \Delta_{n-1}$. This duality is induced by a one-to-one correspondence between subdivisions of $n\Delta_{d-1}$ and triangulations of the polytope $\Delta_{n-1}\times \Delta_{d-1}$ obtained via 
a special case of the Cayley trick~\cite{HRS2000}; for a more thorough discussion see \cite{HRS2000,[Cayley]}.
Figure \ref{cayley_T2} shows an example of a triangulation of the triangular prism $\Delta_1\times \Delta_2=12\times ABC$, and the corresponding subdivision of $2\Delta_2$ , whose three tiles are $ABC + B$, $AC + AB$, and $C + ABC$. 

\begin{figure}[h]
	\centering
	\includegraphics[width=0.6\textwidth]{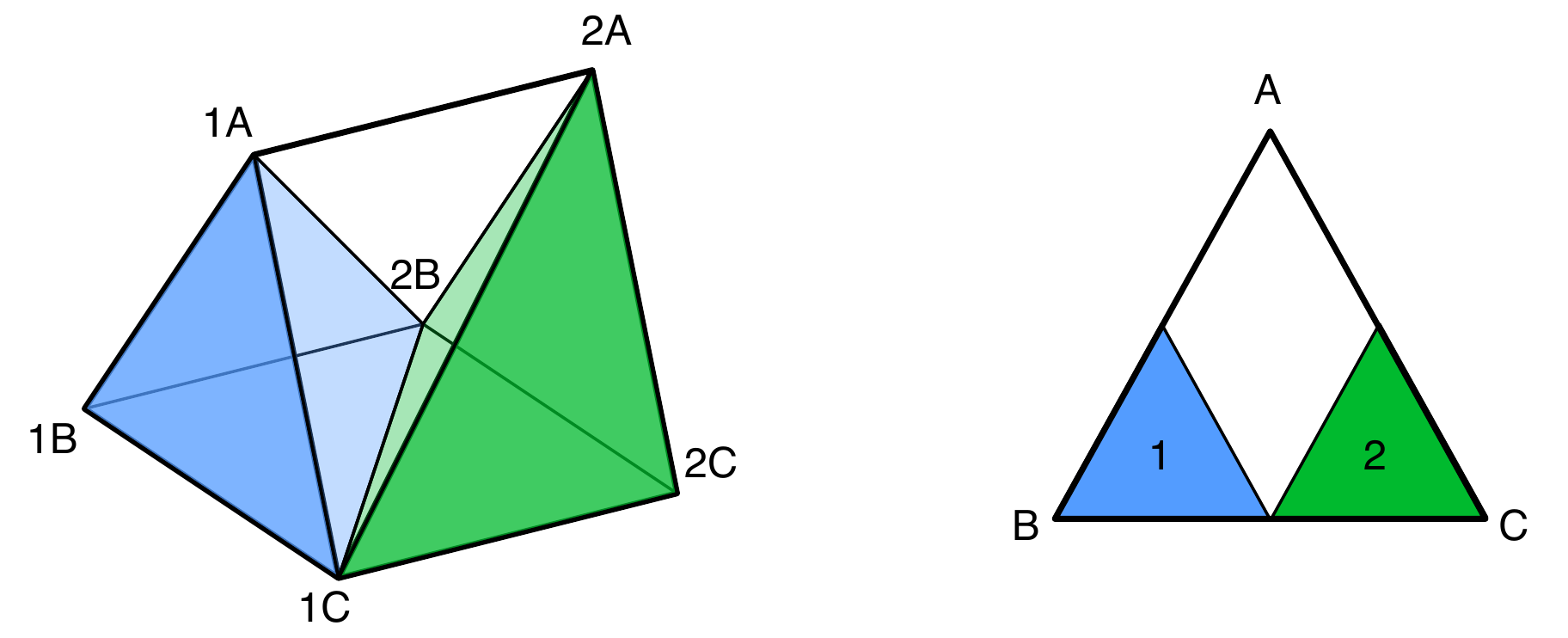}
	\caption{The Cayley trick}
	\label{cayley_T2}
\end{figure}


\noindent
Given a subdivision $S$ of $n\D_{d-1}$, 
we denote by $S^*$ the \emph{dual subdivision} of $d\D_{n-1}$ that corresponds to the triangulation of $\Delta_{d-1}\times \Delta_{n-1}$.
The dual subdivision is given by:
\begin{lem}\label{lem:dual_cell}
The dual of a mixed cell $S_1+\dots  +S_n$ in $S$ is the mixed cell 
$Z_1+\dots  + Z_d$ in $S^*$, 
where
$Z_a=\{v_i: w_a \in S_i \}$.
\end{lem}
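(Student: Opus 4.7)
The plan is to unwind the Cayley trick symmetrically. Recall (see \cite{[Cayley]}) that a triangulation $\mathcal{T}$ of $\Delta_{n-1}\times\Delta_{d-1}$ has full-dimensional simplices of dimension $n+d-2$, each one supported on a vertex set of the form $\{(v_i,w_a) : (i,a)\in E\}$ where $E$ is a spanning tree of the complete bipartite graph $K_{n,d}$ on $[n]\sqcup[d]$. Lower-dimensional faces correspond to spanning forests.

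Under the Cayley trick bijection between $\mathcal{T}$ and the fine mixed subdivision $S$ of $n\D_{d-1}$, the simplex indexed by a spanning tree $E$ corresponds to the mixed cell $S_1+\dots+S_n$ given by
\[
S_i = \{w_a : (i,a)\in E\},
\]
that is, $S_i$ is the set of neighbors of $i\in[n]$ in $E$. This is precisely the description obtained by projecting the Cayley simplex onto the $\Delta_{d-1}$ factor and reading off Minkowski summands.

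The crucial observation is that $\Delta_{n-1}\times\Delta_{d-1}$ and $\Delta_{d-1}\times\Delta_{n-1}$ are canonically the same polytope, so the same triangulation $\mathcal{T}$ produces the dual subdivision $S^*$ of $d\D_{n-1}$ by applying the Cayley trick to the other factor. The same recipe, read from the $[d]$ side of $E$ instead of the $[n]$ side, assigns to our simplex the mixed cell $Z_1+\dots+Z_d$ with
\[
Z_a = \{v_i : (i,a)\in E\},
\]
namely the neighborhood of $a\in[d]$ in $E$.

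Comparing the two formulas gives $Z_a = \{v_i : (i,a)\in E\} = \{v_i : w_a \in S_i\}$, which is exactly the claim. The only step requiring care is explicitly matching the Cayley trick bijection so that the two projections of the same Cayley simplex yield the two Minkowski cells, but once this is stated the lemma is a direct bookkeeping consequence of the bipartite symmetry of $E$. There is no genuine obstacle; the statement amounts to the observation that the two sides of the bipartite incidence $(i,a)\in E$ give dual ways of grouping the same data.
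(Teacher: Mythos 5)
Your argument is correct and is precisely the intended one: the paper omits the proof of this lemma (stating that the results of this section follow easily from the definitions), and the definition of $S^*$ is exactly the Cayley trick applied to the other factor of the same triangulation of $\Delta_{n-1}\times\Delta_{d-1}$, so reading the neighborhoods of the spanning tree $E$ from the $[d]$ side instead of the $[n]$ side gives $Z_a=\{v_i:(i,a)\in E\}=\{v_i: w_a\in S_i\}$ as you say. The bookkeeping also checks out dimensionally, since $\sum_i \dim S_i=(n+d-1)-n=d-1$ and $\sum_a \dim Z_a=(n+d-1)-d=n-1$, confirming that both projections of a Cayley simplex are fine mixed cells.
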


Figure \ref{dual_subdivision} shows an example of a subdivision 
of $3\Delta _{4-1}$, its dual subdivision of $4\Delta_{3-1}$ and 
the Minkowski sum decompositions of the full dimensional cells.

\begin{figure}[h]
	\centering
	\includegraphics[width=0.7\textwidth]{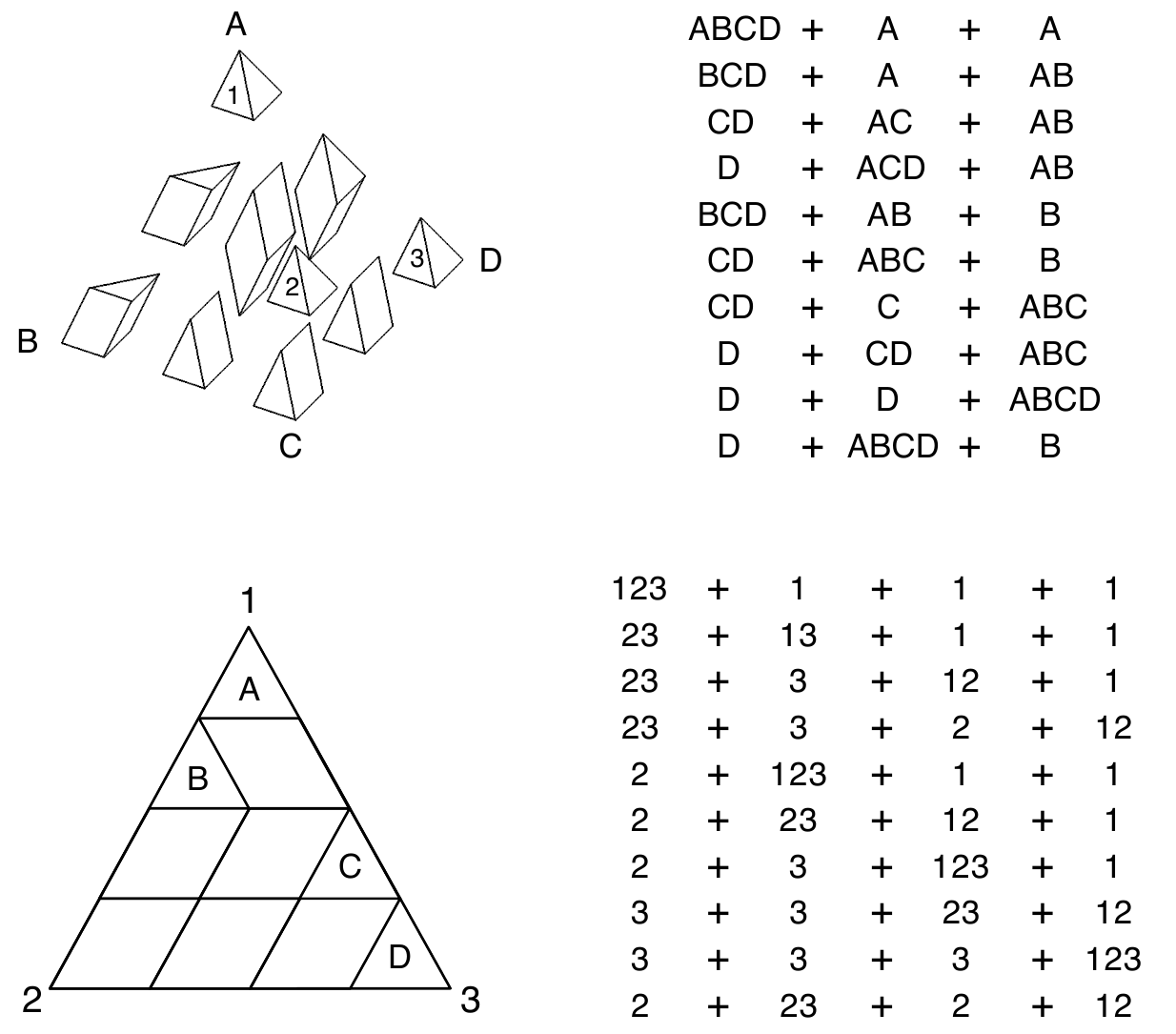}
	\caption{A subdivision $S$ of $3\Delta _{4-1}$, 
	its dual subdivision $S^*$ of $4\Delta_{3-1}$ 
	and the Minkowski sum decompositions of the full dimensional cells.  The systems of permutations $\s = \s(S)$ and $\s^* = \s(S^*)$ are given by 
$
	\s_{AB}=132,\ \s_{AC}=123,\ \s_{AD}=123, 
	\s_{BC}=123,\ \s_{BD}=123,\ \s_{CD}=123,
$
	and $\s^*_{12}=ABCD,\ \s^*_{23}=BACD,\ \s^*_{31}=DCBA$.
	\label{dual_subdivision}}
\end{figure}

\subsection{\textsf{Dualilty for acyclic systems of permutations}}

Let $\s= (\s_{ab})_{1\le a \neq b \le d}$ be an acyclic system 
of permutations on the edges of the simplex $n\D_{d-1}$.
Recall that, for each pair $1\le i \neq j \le n$, the graph $G_{ij}(\s)$
 on $[d]$ vertices
has a directed edge $a\rightarrow b$ if and only if
the permutation $\s_{ab}$ is of the form
$\dots  i \dots  j \dots $. 
Since this graph is acyclic and complete, 
it can be naturally regarded as a permutation $\s^*_{ij}$ of~$[d]$. 
More precisely, for $a\in [d]$, $\s^*_{ij} (a)$ is  the vertex of $G_{ij}(\s)$
whose out-degree is equal to $d-a$.

\begin{definition}
The \emph{dual system} 
of an acyclic system $\s$ of $n\D_{d-1}$
is the system of permutations 
$\s^*=(\s^*_{ij})_{1\le i \neq j \le n}$ 
on the edges of $d\D_{n-1}$.
\end{definition}

\begin{example}
If $\sigma_{12}=132, \sigma_{23}=213, \sigma_{31}=231$, then $\sigma^*_{12} = 132, \sigma^*_{23} = 231, \sigma^*_{31} = 321$.
\end{example}

\begin{lem}
The permutation $\s_{ab}$ is of the form $\dots  i \dots  j \dots $ if and only if 
the permutation $\s^*_{ij}$ is of the form $\dots  a \dots  b \dots $.
\end{lem}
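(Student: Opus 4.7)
The plan is to unravel the definitions and exploit the fact that an acyclic complete tournament is automatically transitive. First I would observe that, by definition, the graph $G_{ij}(\s)$ is a tournament on $[d]$ (every pair of vertices is joined by exactly one directed edge, since $\s_{ab}$ is the reverse of $\s_{ba}$), and by hypothesis it is acyclic. A standard fact is that an acyclic tournament is transitive, and hence induces a unique linear order $\prec$ on its vertex set, characterized by $b \prec a$ if and only if there is a directed edge $a \to b$ in $G_{ij}(\s)$.

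Next I would compute the out-degree of a vertex $a$ in this transitive tournament: it equals the number of vertices that lie strictly below $a$ in the linear order $\prec$. Therefore listing the vertices of $[d]$ in decreasing order of out-degree produces exactly the list from greatest to least with respect to $\prec$. Comparing with the definition of $\s^*_{ij}$, which by construction sends $a$ to the vertex of out-degree $d-a$, I would conclude that $\s^*_{ij}$ is precisely the permutation that writes the elements of $[d]$ in decreasing order with respect to $\prec$.

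With this identification in hand the lemma becomes a chain of equivalences: $\s_{ab}$ has the form $\dots i \dots j \dots$ if and only if there is a directed edge $a \to b$ in $G_{ij}(\s)$ (by the definition of $G_{ij}(\s)$), if and only if $b \prec a$ in the linear order (by transitivity), if and only if $a$ appears before $b$ in $\s^*_{ij}$ (by the description of $\s^*_{ij}$ above), which is the same as saying $\s^*_{ij}$ has the form $\dots a \dots b \dots$. I do not expect any serious obstacle here; the only point that requires care is to verify the correspondence between out-degrees and positions in $\s^*_{ij}$ matches the convention in the definition (out-degree $d-a$ at position $a$), so that the decreasing-order reading is correct.
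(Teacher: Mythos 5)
Your argument is correct and is precisely the routine verification the paper leaves out (its proof is marked ``Omitted''): an acyclic tournament is transitive, its out-degree sequence recovers the linear order, and the definition of $\s^*_{ij}$ via out-degrees then makes the equivalence between $a\to b$ in $G_{ij}(\s)$ and $a$ preceding $b$ in $\s^*_{ij}$ immediate. No gaps; the one convention check you flag (out-degree $d-a$ at position $a$, so the one-line notation reads the order from source to sink) is handled correctly.
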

\begin{proof}
Omitted.
\end{proof}

\begin{prop}
The system of permutations $\s^*$ is acyclic and $(\s^*)^*=\s$.
\end{prop}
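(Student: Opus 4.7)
The plan is to deduce both claims directly from the preceding Lemma, which is the symmetric statement ``$\sigma_{ab} = \ldots i \ldots j \ldots$ if and only if $\sigma^*_{ij} = \ldots a \ldots b \ldots$''. Almost all of the content is packed into that lemma, so the proposition should amount to an identification of graphs followed by a book-keeping observation.

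First I would unpack the definitions. To show $\sigma^*$ is acyclic, I need to verify that for every pair $a \neq b$ in $[d]$, the tournament $G_{ab}(\sigma^*)$ on vertex set $[n]$ is acyclic. By definition, this tournament has a directed edge $i \to j$ exactly when $\sigma^*_{ij}$ is of the form $\ldots a \ldots b \ldots$. Applying the preceding Lemma, that condition is equivalent to $\sigma_{ab}$ being of the form $\ldots i \ldots j \ldots$. In other words, the directed edges of $G_{ab}(\sigma^*)$ are precisely the pairs $i \to j$ such that $i$ precedes $j$ in the permutation $\sigma_{ab}$. Hence $G_{ab}(\sigma^*)$ is the tournament associated with the total order $\sigma_{ab}$, which is manifestly acyclic. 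This establishes the first assertion.

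For the second assertion, the identification above does more than just prove acyclicity: it tells us that the linear order encoded by $G_{ab}(\sigma^*)$ is exactly $\sigma_{ab}$. By the definition of the dual operation, $(\sigma^*)^*_{ab}$ is precisely this linear order (it is the permutation of $[n]$ that records vertices of $G_{ab}(\sigma^*)$ in decreasing order of out-degree). Therefore $(\sigma^*)^*_{ab} = \sigma_{ab}$ for every $1 \le a \neq b \le d$, which yields $(\sigma^*)^* = \sigma$.

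There is no real obstacle here beyond invoking the previous Lemma correctly; the only thing to be careful about is the convention that identifies an acyclic tournament on $[d]$ with a permutation (sorting by out-degree), and that it is consistent between the definition of $\sigma^*$ from $\sigma$ and the definition of $(\sigma^*)^*$ from $\sigma^*$. Once that convention is applied uniformly, the proof is a one-line consequence of the lemma.
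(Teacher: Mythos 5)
Your proof is correct and follows the route the paper intends: the paper omits this proof precisely because, as you show, both claims fall out of the preceding Lemma once one observes that $G_{ab}(\sigma^*)$ is the tournament of the total order $\sigma_{ab}$. The only point worth adding (also implicit in the paper) is the one-line check that $\sigma^*_{ji}$ is the reverse of $\sigma^*_{ij}$, so that $\sigma^*$ is indeed a valid system of permutations.
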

\begin{proof}
Omitted.
%
\end{proof}

\subsection{\textsf{Deletion and contraction for subdivisions}}

Recall that we denote the vertices of $\Delta_{d-1}$ by $w_1,\dots ,w_d$, and the vertices of $\Delta_{n-1}$ by $v_1,\dots ,v_n$.

\begin{definition}
Let  $S$ be a subdivision of $n\Delta_{d-1}$. For $i\in [n]$, 
the \emph{deletion} $S{\backslash i}$ is the subdivision of $(n-1)\D_{d-1}$
whose mixed cells correspond to the Minkowski sums obtained 
from the Minkowski sums of $S$ by deleting the $i$th summand. 
\end{definition}


\begin{definition}
Let $S$ be a subdivision of the simplex $n\Delta_{d-1}$. 
For $a \in [d]$, the \emph{contraction} $S/a$ is the 
subdivision of $n\D_{d-2}$ which consists of the cells
$S_1+\dots  +S_n$ of $S$ such that 
$S_i\subset  \{ w_1,\dots ,\hat{w_a},\dots ,w_d \}$.
\end{definition}

It is useful to also think of the deletion $S{\backslash i}$ and the contraction $S{/a}$ in terms of the Cayley trick. The subdivision $S$ corresponds to a triangulation $T$ of $\Delta_{n-1}\times \Delta_{d-1}$; the deletion $S{\backslash i}$ and contraction $S{/a}$ correspond to the restriction of $T$ to the appropriate facets of $\Delta_{n-1}\times \Delta_{d-1}$.

\subsection{\textsf{Deletion and contraction for systems of permutations}}

\begin{definition}
Let $\s$ be a system of permutations on the edges of $n\Delta_{d-1}$. For $i\in [n]$, the \emph{deletion} $\s{\backslash i}$ is a system of permutations on the edges of $(n-1)\Delta_{d-1}$,  obtained from $\s$ by deleting the number $i$ from each permutation.
\end{definition}

\noindent For example, for the system of permutations 
$\s=\{1423, 3124, 4321\}$ 
on the edges of $4\D_3$ in Figure \ref{Coloration of a triangle}, we get 
$\s{\backslash 2}=\{143, 314, 431\}$. 
Note that $\s{\backslash 2}$ is the system of permutations of the deletion $S{\backslash2}$ of the subdivision $S$ in the same figure.

\begin{definition}
Let $\s$ be a system of permutations on the edges of $n\Delta_{d-1}$. For $a \in [d]$, the \emph{contraction} $\s{/a}$ is the restriction of $\s$ to the edges of the facet of $n\Delta_{d-1}$ 
that do not contain the vertex $nw_a$. 
\end{definition}

The following proposition follows directly from the definitions.

\begin{prop}
If $\s=\s(S)$ is the system of permutations of a subdivision $S$, 
then 
\begin{enumerate}
\item $\s{\backslash i}=\s (S \backslash i)$
\item $\s{/a}=\s(S/a)$
\end{enumerate}
\end{prop}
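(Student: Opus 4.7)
The proof of this proposition is essentially an unwinding of definitions, verified edge by edge using the explicit description of $\sigma_{ab}$ from Remark \ref{rem:edge}. The plan is to fix an arbitrary edge of the target simplex and show that the permutation on that edge produced by the two procedures (first delete/contract $S$, then read off the system; versus first read off $\sigma$, then delete/contract it) coincide. Since a system of permutations is determined by its values on each edge, this suffices.

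For part (2), the argument is nearly immediate. The facet of $n\Delta_{d-1}$ opposite to the vertex $nw_a$ is the simplex $n\Delta_{d-2}$ spanned by the $nw_b$ with $b \neq a$, and its edges are exactly the $nw_bw_c$ with $b,c\neq a$. Any cell $S_1+\dots+S_n$ of $S$ meeting such an edge has all $S_k\subset\{w_b,w_c,w_bw_c\}\subset\{w_1,\dots,\hat{w_a},\dots,w_d\}$, so it already lies in $S/a$. Therefore $(S/a)|_{nw_bw_c}=S|_{nw_bw_c}$ as subdivisions of the edge, and the induced permutations agree: $\sigma(S/a)_{bc}=\sigma(S)_{bc}=(\sigma/a)_{bc}$.

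For part (1), fix an edge $(n-1)w_aw_b$ of $(n-1)\Delta_{d-1}$. By Remark \ref{rem:edge}, $\sigma_{ab}$ records the order in which the $n$ summands along the edge $nw_aw_b$ of $S$ switch from $w_a$ to $w_aw_b$ (and then to $w_b$) as we traverse the edge from $nw_a$ to $nw_b$. I would show that the sequence of cells of $S\backslash i$ along $(n-1)w_aw_b$ is obtained from the sequence of cells of $S$ along $nw_aw_b$ by deleting the $i$-th coordinate from each Minkowski sum and then removing the duplicate produced in exactly one place, namely the step at which position $i$ was converting from $w_a$ to $w_aw_b$ in $S$. The remaining $n-1$ transitions occur in precisely the order inherited from $\sigma_{ab}$, which is to say, in the order given by $\sigma_{ab}$ with the letter $i$ deleted. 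Hence $\sigma(S\backslash i)_{ab}=\sigma_{ab}\backslash i = (\sigma\backslash i)_{ab}$.

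The only delicate point, and where care is needed, is the claim that deleting the $i$-th summand collapses exactly one pair of consecutive cells along the edge (namely the pair separated by the conversion of position $i$), and leaves all other transitions intact. This follows because consecutive cells in $S|_{nw_aw_b}$ differ only in the single coordinate being converted; if that coordinate is $i$, the two cells agree after deletion, and otherwise they still differ in a coordinate that survives. This bookkeeping is the only substantive content of the proof, and once it is stated the result is clear. The ``distinct cells'' interpretation of the deletion of a subdivision (as opposed to a multiset of Minkowski sums) is precisely what makes this collapse the right thing to do.
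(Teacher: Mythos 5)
Your proof is correct and is precisely the unwinding of definitions that the paper invokes (it states that the proposition ``follows directly from the definitions'' and omits the argument; your part (2) in particular matches the intended observation that the edge subdivisions on the facet opposite $nw_a$ are literally unchanged). One small wording fix for part (1): consecutive unit segments along $nw_aw_b$ in fact differ in \emph{two} summands (one converting $w_aw_b \to w_b$ and one converting $w_a \to w_aw_b$), so the accurate statement is that deleting the $i$th summand degenerates the unique segment with $S_i = w_aw_b$ to a point (equivalently, identifies two consecutive vertices of the edge subdivision), while the remaining $n-1$ segments survive in their inherited order --- which is exactly what your conclusion requires.
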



\subsection{\textsf{Properties}}
In this subsection we show that the operations of deletion and contraction
are dual to each other, and that the dual system of the system of
permutations of a subdivision $S$ is equal to the system of permutations
of the dual subdivision $S^*$. This result will be a key lemma in Section \ref{Section_Applications}.

\begin{prop}\label{prop_dual1}
Let $S$ be a subdivision of $n\D_{d-1}$ and $\s$ be an acyclic system of permutations of $n\Delta_{d-1}$. Let $i\in [n]$ and $a \in [d]$. Then
\[
(S{\backslash i})^*=S^*{/i}, \qquad 
(S{/a})^*=S^*{\backslash a},\qquad 
(\s{\backslash i})^*=\s^*{/i}, \qquad 
(\s{/a})^*=\s^*{\backslash a}.\qquad 
\]

\end{prop}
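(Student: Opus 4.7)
\medskip
\noindent\textbf{Proof proposal.} My plan is to prove the four identities in two batches: the two identities for subdivisions will be handled jointly via the Cayley trick, while the two identities for acyclic systems of permutations will be handled by directly unwinding the definitions. In both settings the four identities are dual pairs (swap the roles of $[n]$ and $[d]$), so once one identity in each pair is established, the other follows by the same argument with these roles exchanged, together with the already established fact that $(S^*)^*=S$ and $(\s^*)^*=\s$.

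For the subdivision identities, the key observation is that deletion and contraction of a subdivision $S$ of $n\Delta_{d-1}$ correspond, under the Cayley trick, to restricting the associated triangulation $T$ of $\Delta_{n-1}\times\Delta_{d-1}$ to one of its two families of facets. Specifically, $S\backslash i$ corresponds to the restriction of $T$ to the facet $(\Delta_{n-1})_{\widehat{i}}\times\Delta_{d-1}$ (where $(\Delta_{n-1})_{\widehat{i}}$ is the facet of $\Delta_{n-1}$ opposite $v_i$), while $S/a$ corresponds to the restriction of $T$ to $\Delta_{n-1}\times(\Delta_{d-1})_{\widehat{a}}$. Since duality of subdivisions is nothing more than reading the same triangulation of $\Delta_{n-1}\times\Delta_{d-1}$ from the two factors, the triangulation associated with $(S\backslash i)^*$ is the restriction of $T$ to $(\Delta_{n-1})_{\widehat{i}}\times\Delta_{d-1}$ read from the second factor, and the triangulation associated with $S^*/i$ is the restriction of $T^*$ to the facet of $\Delta_{d-1}\times\Delta_{n-1}$ opposite $v_i$, which is the same restriction. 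Hence $(S\backslash i)^* = S^*/i$. The identity $(S/a)^* = S^*\backslash a$ is the dual statement.

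For the identities on systems of permutations, I will compute each side edge-by-edge. Fix $i\in[n]$; $\s\backslash i$ lives on $(n-1)\Delta_{d-1}$, so $(\s\backslash i)^*$ is indexed by pairs $j\neq k$ in $[n]\setminus\{i\}$ and each component is a permutation of $[d]$. By definition, $((\s\backslash i)^*)_{jk}$ encodes the acyclic tournament $G_{jk}(\s\backslash i)$, whose edge $a\to b$ records whether $(\s\backslash i)_{ab}=\s_{ab}\backslash i$ has $j$ before $k$. But removing the symbol $i$ from a permutation does not change the relative order of any two other symbols, so $G_{jk}(\s\backslash i)=G_{jk}(\s)$, and hence $((\s\backslash i)^*)_{jk}=\s^*_{jk}$. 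Since $\s^*/i$ is, by definition, the restriction of $\s^*$ to the edges $jk$ of $d\Delta_{n-1}$ with $j,k\neq i$, this gives $(\s\backslash i)^*=\s^*/i$. For $(\s/a)^*=\s^*\backslash a$, I argue symmetrically: on the left, $G_{ij}(\s/a)$ is the tournament on $[d]\setminus\{a\}$ obtained by deleting the vertex $a$ from $G_{ij}(\s)$, and on the right, $\s^*_{ij}\backslash a$ is the permutation of $[d]\setminus\{a\}$ obtained by erasing $a$ from $\s^*_{ij}$; the tournament-to-permutation dictionary shows these produce the same permutation of $[d]\setminus\{a\}$.

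I do not anticipate any serious obstacle, since each identity reduces to bookkeeping once one matches up the combinatorial meaning of each operation. The subtlest point is making sure that the Cayley-trick correspondences for deletion and contraction are set up so that the two facets of $\Delta_{n-1}\times\Delta_{d-1}$ involved really are swapped by the duality of factors; I will invoke Santos \cite{[Cayley]} for this, and then the rest is definitional.
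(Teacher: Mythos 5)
Your proposal is correct and follows exactly the route the paper intends: the paper omits the proof, asserting only that the identities ``follow directly from the Cayley trick and from the definitions,'' and your argument fills in precisely those details (facet restrictions of the triangulation of $\Delta_{n-1}\times\Delta_{d-1}$ for the subdivision identities, and the observation that deleting a symbol preserves relative order while deleting a vertex of an acyclic tournament corresponds to erasing a letter of its linear order for the permutation identities). No discrepancies to report.
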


%
%

\noindent The geometric content of this propositions is the following:
The deletion of a color $i$ in a subdivision $S$ corresponds
to the contraction of the vertex $i$ in the dual subdivision $S^*$, and
viceversa.
This proposition follows directly from the Cayley trick and from the definitions.

\begin{prop}\label{dual_conf=conf_of_dual}
Let $S$ be a subdivision of $n\Delta_{d-1}$ and $\sigma(S)$ be the associated system of permutations. Then ${\s(S)}^*=\s(S^*)$.
\end{prop}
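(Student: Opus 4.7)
My plan is to use the lemma stated just before the proposition (whose proof is also omitted), which characterizes the dual system combinatorially: $\sigma(S)^*_{ij}$ has $a$ before $b$ if and only if $\sigma(S)_{ab}$ has $i$ before $j$. With this characterization in hand, the proposition reduces to establishing the same combinatorial equivalence for the other system $\sigma(S^*)$; that is, it suffices to show
\[
\sigma(S^*)_{ij} \text{ has } a \text{ before } b \quad \Longleftrightarrow \quad \sigma(S)_{ab} \text{ has } i \text{ before } j. \qquad (\star)
\]

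To prove $(\star)$, I will reduce to the case $n=d=2$ using the machinery of deletion, contraction, and duality developed in Section \ref{section dual}. Fix $i\neq j$ in $[n]$ and $a\neq b$ in $[d]$, let $\{c_1,\ldots,c_{d-2}\}=[d]\setminus\{a,b\}$ and $\{k_1,\ldots,k_{n-2}\}=[n]\setminus\{i,j\}$, and set
\[
\tilde S := S/c_1/\cdots/c_{d-2}\backslash k_1\backslash\cdots\backslash k_{n-2},
\]
a subdivision of $2\Delta_1$. By the descriptions of deletion and contraction for systems of permutations, $\sigma(\tilde S)_{ab}$ is precisely the restriction of $\sigma(S)_{ab}$ to $\{i,j\}$, so the right-hand side of $(\star)$ is determined by $\tilde S$. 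Iterating Proposition \ref{prop_dual1} yields $\tilde S^{*}=S^{*}/k_1/\cdots/k_{n-2}\backslash c_1\backslash\cdots\backslash c_{d-2}$, the analogous reduction of $S^{*}$ with the roles of $(i,j)$ and $(a,b)$ interchanged; hence $\sigma(\tilde S^{*})_{ij}$ is the restriction of $\sigma(S^{*})_{ij}$ to $\{a,b\}$, and the left-hand side of $(\star)$ is determined by $\tilde S^{*}$. Therefore it is enough to verify $(\star)$ for $\tilde S$.

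The $2\times 2$ case is a direct calculation. There are exactly two subdivisions of $2\Delta_1$ with two summands indexed by $i,j$ and two vertices $w_a,w_b$, distinguished by whether $\sigma(\tilde S)_{ab}=ij$ or $ji$. In each case, Lemma \ref{lem:dual_cell} computes the dual subdivision cell by cell. I expect to find that the subdivision with $\sigma(\tilde S)_{ab}=ij$ has dual with cells $v_iv_j+v_i$ and $v_j+v_iv_j$, so $\sigma(\tilde S^{*})_{ij}=ab$; and the subdivision with $\sigma(\tilde S)_{ab}=ji$ has dual with cells $v_i+v_iv_j$ and $v_iv_j+v_j$, so $\sigma(\tilde S^{*})_{ij}=ba$. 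This establishes $(\star)$ and completes the proof.

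The only delicate point is the bookkeeping in the middle step: one must verify that deletion and contraction on $S$ really do correspond, under Proposition \ref{prop_dual1}, to the claimed restrictions of the systems $\sigma(S)$ and $\sigma(S^{*})$, and that these restrictions preserve the relative orders used in $(\star)$. Once this is checked, the $2\times 2$ verification above is entirely mechanical, and the proposition follows.
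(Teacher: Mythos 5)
Your proof is correct and follows essentially the same route as the paper: reduce to a minimal case by checking relative orders of pairs and using the compatibility of deletion/contraction with duality, then verify directly via Lemma \ref{lem:dual_cell}. The only difference is that the paper stops at $n=2$ and invokes the fact that all subdivisions of $2\Delta_{d-1}$ are relabelings of a single staircase, whereas you also contract down to $d=2$, making the base-case verification even more mechanical.
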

\begin{proof}
It suffices to check the relative positions of any two numbers $i,j$ on the edges of $n\Delta_{d-1}$; \emph{i.e.}, to prove the result for $n=2$. Now, all mixed subdivisions of $2\Delta_{d-1}$ are isomorphic, up to relabeling, to the one whose $d$ full-dimensional cells are:
\[
w_1w_2w_3 \ldots  w_ d + w_1, \,
\,w_2w_3 \ldots  w_ d + w_1w_2, \,
\,w_3 \ldots  w_ d + w_1w_2w_3, \,
\ldots, 
\,w_d + w_1w_2w_3\ldots w_d.
\]
The proposition is easily verified in this case.
\end{proof}

The operations of restriction, contraction, and duality will be very be useful to us in what follows.

\section{\textsf{From systems of permutations to simplex positions}} \label{sec:simplex_positions}

A subdivision of $n\Delta_{d-1}$ is not uniquely determined by its system of permutations, but in this section we will see that the positions of its simplices \textbf{are} completely determined. We already proved this result for lozenge tilings in Theorem \ref{theo_permstoholes}. We now prove it in general.

\begin{theo}[Acyclic systems of permutations and simplex positions]\label{theorem of positions}
The numbered positions of the simplices in a fine mixed subdivision $S$ of~$n\Delta_{d-1}$ are completely determined by its system of permutations $\s = \s(S)$. More precisely, the Minkowski 
decomposition of the $i$th simplex is
\[
w_{a_1} + \cdots + w_{a_{i-1}} + w_1w_2\ldots w_n + w_{a_{i+1}}+ \cdots + w_{a_n}
\]
where $a_j$ is the unique source of the acyclic graph $G_{ij}(\s)$ for all $j\neq i$.
\end{theo}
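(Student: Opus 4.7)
The plan is to pass to the dual subdivision and read off each $a_j$ from the edges of $S^*$ emanating from the vertex $dv_i$. Write the $i$th simplex as $T_i = S_1 + \cdots + S_n$ with $S_i = \Delta_{d-1}$ and $S_j = w_{a_j}$ for $j \neq i$. By Lemma~\ref{lem:dual_cell}, the dual cell $Z = Z_1 + \cdots + Z_d$ of $T_i$ in $S^*$ satisfies
\[
Z_a \;=\; \{v_i\} \,\cup\, \{v_j : j \neq i,\ a_j = a\},
\]
since $w_a \in S_i$ for every $a$, while $w_a \in S_j$ (for $j \neq i$) iff $a = a_j$. In particular $v_i$ lies in every summand, so $dv_i$ is a vertex of $Z$, and each $v_j$ with $j \neq i$ lies in exactly one summand, namely $Z_{a_j}$.

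Next I would analyze the edges of $Z$ incident to $dv_i$. Replacing $v_i$ by $v_j$ inside the $a_j$-th summand produces an edge of the Minkowski cell $Z$ at $dv_i$ in direction $v_j - v_i$, namely
\[
v_i + \cdots + \underbrace{v_i v_j}_{\text{position } a_j} + \cdots + v_i,
\]
and as $j$ ranges over $[n]\setminus\{i\}$ this yields $n-1$ distinct edges, matching $\dim Z = \sum_a (|Z_a|-1) = (d + (n-1)) - d = n-1$. Each such edge is a $1$-cell of $S^*$ sitting on the segment $dv_iv_j$ and containing $dv_i$, so by the description of edge subdivisions in Remark~\ref{rem:edge} (applied to $S^*$) it must be the \emph{first} $1$-cell encountered along $dv_iv_j$ starting from $dv_i$. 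But that first cell is characterized by having its unique $v_iv_j$ summand in position $\sigma^*_{ij}(1)$, where $\sigma^* := \sigma(S^*)$. Comparing the two descriptions gives $a_j = \sigma^*_{ij}(1)$.

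Finally, Proposition~\ref{dual_conf=conf_of_dual} identifies $\sigma(S^*)$ with the dual $\sigma(S)^*$, and by the very definition of the dual system $\sigma^*_{ij}(1)$ is the vertex of $G_{ij}(\sigma)$ of out-degree $d-1$, i.e.\ the unique source of that acyclic tournament. Hence $a_j$ is the unique source of $G_{ij}(\sigma)$, as claimed. The one step that requires a moment's care is the middle one: one must verify that the $n-1$ directions $v_j - v_i$ really give $n-1$ distinct edges of $Z$ at $dv_i$, which rests on the fact that each $v_j$ ($j \neq i$) belongs to exactly one summand $Z_{a_j}$; once this is in place, the identification with the first edge cell of $S^*$ along $dv_iv_j$ is immediate from Remark~\ref{rem:edge}.
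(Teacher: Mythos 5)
Your argument is correct and follows essentially the same route as the paper's proof: dualize via Lemma~\ref{lem:dual_cell}, observe that the dual cell of the $i$th simplex is the unique full-dimensional cell of $S^*$ at the vertex $dv_i$, read off its summands from the colors of the edges of $S^*$ at $dv_i$ (which are the sources of the $G_{ij}$), and dualize back. You simply make explicit two steps the paper leaves implicit — the identification of the edge of the dual cell in direction $v_j-v_i$ with the first cell of $S^*|_{dv_idv_j}$ via Remark~\ref{rem:edge}, and the appeal to Proposition~\ref{dual_conf=conf_of_dual} — and both are handled correctly.
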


\begin{proof}
The Minkowski sum decomposition of the $i$th simplex $T$ in the subdivision $S$ 
has $i$th component equal  to $w_1\ldots w_d$. By Lemma~\ref{lem:dual_cell},
this implies that the letter $v_i$ appears in all the components of the Minkowski decomposition of the dual cell $T^*$. It follows that $T^*$ is the unique full dimensional cell of the subdivision $S^*$ 
that contains the vertex $dv_i$ of $d\Delta_{n-1}$.
This dual cell $T^*$  is completely determined by the colors of the edges adjacent 
to the vertex $dv_i$ in $S^*$, which are precisely the sources of 
$G_{ij}(\s)$ with $j\neq i$. 
More explicitly, for each index $a \in [d]$, the Minkowski decomposition of the dual cell $T^*$ has  $a$th component  equal to $\{ v_j,\ j\in [n] : j=i \text{ or source}(G_{ij}) = a \}$.
Dualizing and applying Lemma~\ref{lem:dual_cell} again, we get the desired Minkowski decomposition of the simplex $T$. 
\end{proof}

\begin{figure}[h]
	\centering
	\includegraphics[width=0.7\textwidth]{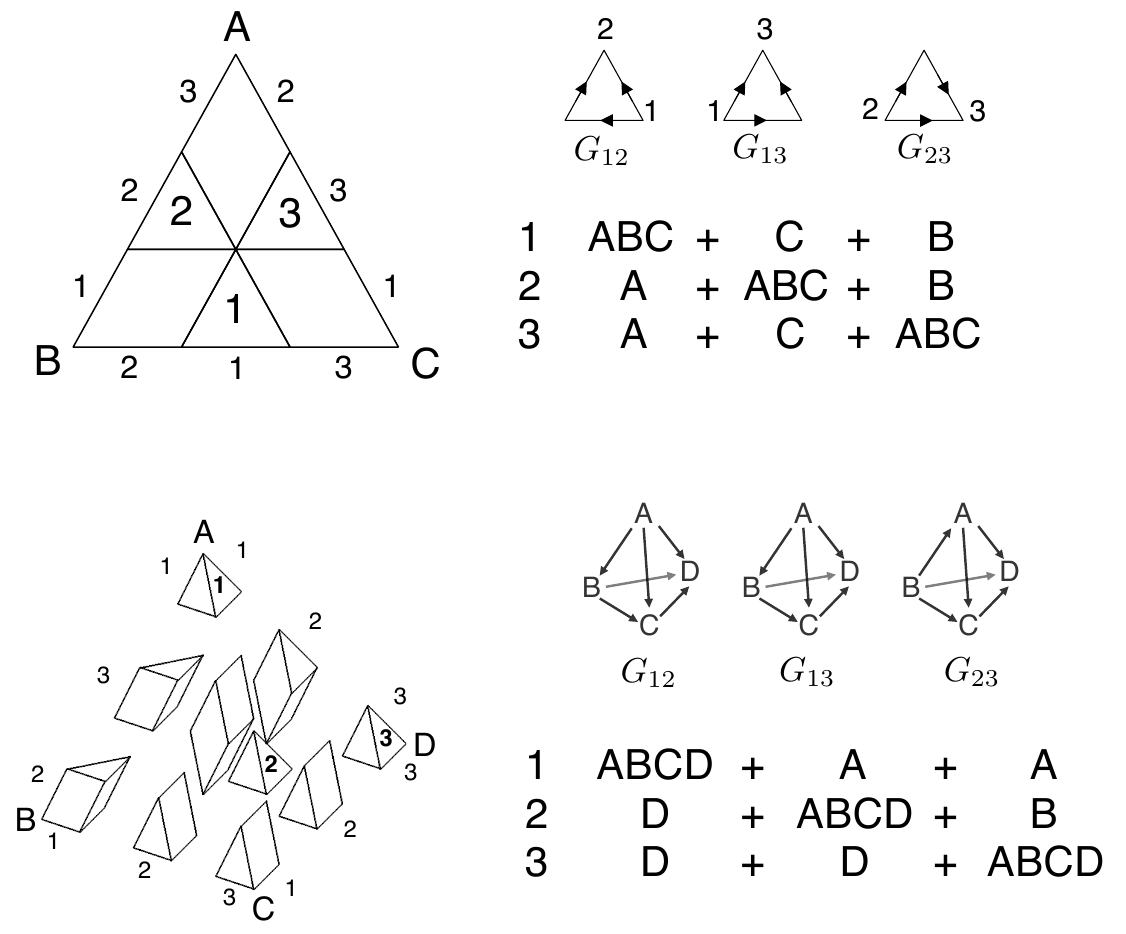}
	\caption{
	How to obtain the positions (and Minkowski sum decompositions) of the simplices from the
	system of permutations. 	} 
	\label{orientation_T3}
\end{figure}

Figure \ref{orientation_T3} shows two examples of how to compute the positions
of the simplices in a subdivision from its system of permutations.

\begin{remark}
In dimension 2, our proofs of Theorem \ref{theo_permstoholes} and Theorem \ref{theorem of positions} give two descriptions of the triangle positions as a function of the acyclic system of permutations. The first description, and
Lemma \ref{lem:positions_from_u_v} in particular,  gives a simple computation of these positions. The second description gives us additional information about the triangles, namely, their Minkowski sum decompositions. 
For higher $d$, this second computation works without modification. It would also be interesting to generalize the first one; \emph{i.e.}, to find a direct description of the positions of the simplices in the spirit of our proof of Theorem \ref{theo_permstoholes}.
\end{remark}

\section{\textsf{The Spread Out Simplices Conjecture}}\label{Section_Applications}

One of the motivations of this paper is the Spread Out Simplices Conjecture of Ardila and Billey. They showed that every subdivision of $n\Delta_{d-1}$ contains precisely $n$ unit simplices, and studied where those simplices could be located. They conjectured that the possible positions of the simplices are given by the bases of the matroid determined by the lines in a generic complete flag arrangement. For details, see \cite{[Ardila-Billey]}.

The following is an equivalent statement. Recall that a collection of $n$ simplices in $n \Delta_{d-1}$ is said to be \emph{spread out} if no subsimplex of size $k$ contains more than $k$ of them.

\begin{holes_conj}\cite[Conjecture 7.1]{[Ardila-Billey]}.
 \label{holes_conjecture}
 A collection of $n$ simplices in $n\Delta_{d-1}$ can be extended to a fine mixed subdivision if and only if it is spread out.
\end{holes_conj}

\subsection{\textsf{Regular subdivisions: Yoo's example}}\label{sec:yoo}

Question 8.3 in \cite{[Ardila-Billey]} asked whether 
Conjecture \ref{holes_conjecture} is true in the more restrictive context of \textbf{regular} subdivisions. One may ask the same question for 
Conjecture \ref{conj_acyclic}. Hwanchul Yoo \cite{Yoo} showed that these statements are  false in that context, even for $d=3$. Figure \ref{fig:non_regular_system} shows an acyclic system of permutations on $6 \Delta_2$ and a collection of 6 triangles which can only be realized by two subdivisions, neither of which is regular.

\begin{figure}[h]
	\centering
	\includegraphics[width=0.3\textwidth]{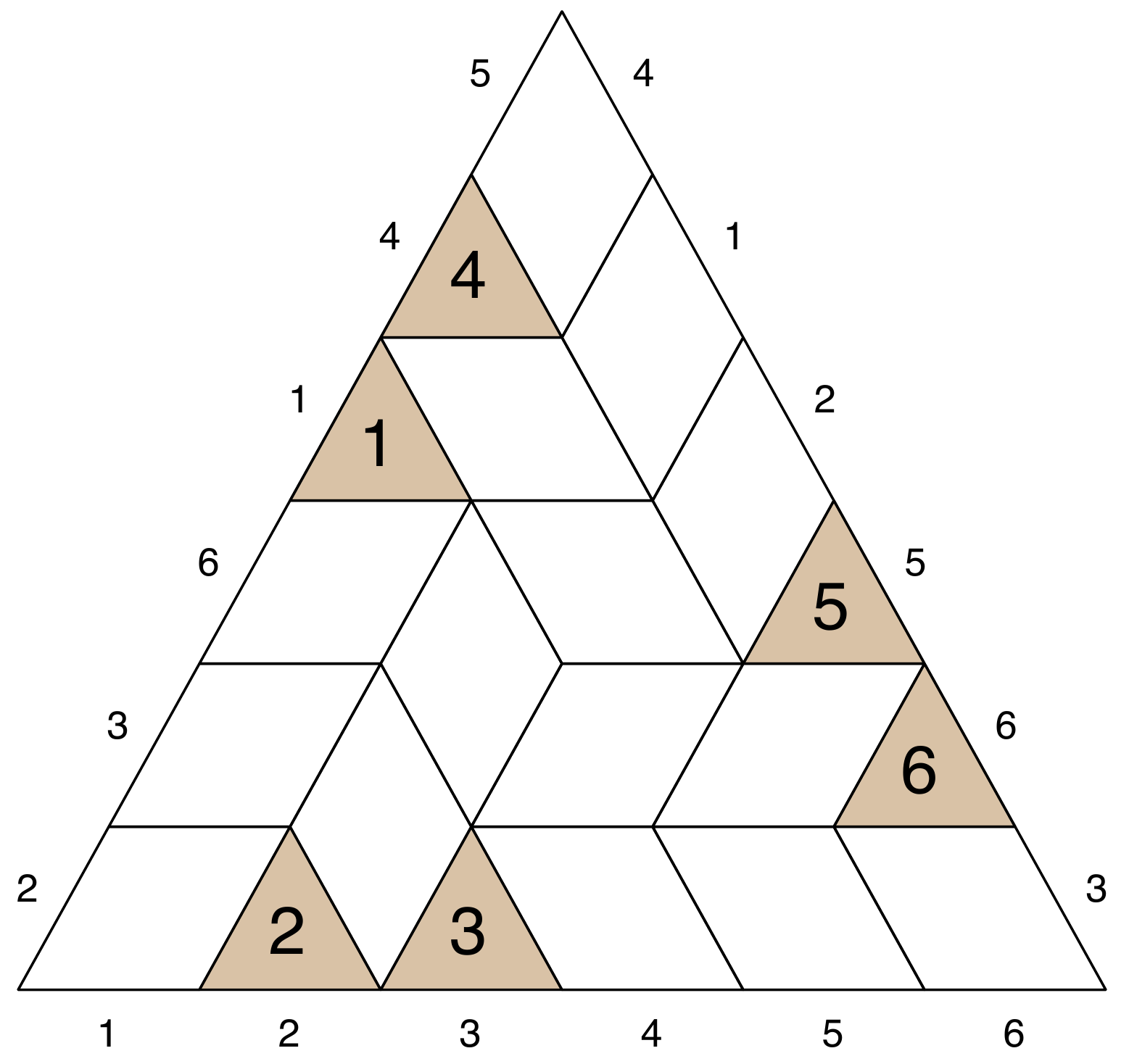}
	\caption{A non-regular subdivision of $6\D_2$. 
	Its system of permutations and triangle positions
	cannot be achieved by a regular subdivision. 
	(Example by Hwanchul Yoo).}
	\label{fig:non_regular_system}
\end{figure}

\subsection{\textsf{The simplex positions are spread out}}

We identify the possible positions of the simplices in a 
subdivision 
with the lattice points of the simplex
$\{
(x_1,\dots ,x_d) \in \mathbb R^d : 
x_1+\dots  + x_d = n-1 \text{ and } x_1,\dots ,x_d \ge 0
\}.
$
Theorem \ref{theorem of positions}
leads us to the following definition.

\begin{definition}
\emph{The set of simplex positions} $P(\s)$ of an acyclic
system of permutations $\s$ of $n\D_{d-1}$ is as follows: For $1 \leq i \leq n$ the $i$th simplex has position $(x_1^i, \ldots, x_d^i)$, where 
\[
x_a^i=|\{j\neq i : \text{$a$ is the unique source of } G_{ij}(\sigma)\}|.
\]
\end{definition}

\noindent  Notice that this definition makes sense for arbitrary acyclic systems, and not
only for those coming from subdivisions. When $\s$ comes from a subdivision, the set of positions $P(\s)$ is the one given by Theorem \ref{theorem of positions}. 

\begin{remark} \label{simplicesviadual}
As seen in the proof of Theorem \ref{theorem of positions}, computing the positions of the simplices of $\s$ is very easy if we know $\s^*$. Each simplex of $\s$ corresponds to a vertex $v$ of $d \Delta_{n-1}$, and its Minkowski summands are $\Delta_{d-1}$ and the $d-1$ labels on the edges coming out of $v$ in $\s^*$.  For instance, if $\s$ is the subdivision on the top of Figure \ref{dual_subdivision}, then its three simplices are readily given by the permutations around the dual triangle in $\s^*$: they are $ABCD+A+A, D+ABCD+B, D+D+ABCD$. 
\end{remark}

Recall that Ardila and Billey proved the 
forward direction of 
Conjecture \ref{holes_conjecture}:

\begin{theo}[{\cite[Proposition 8.2] {[Ardila-Billey]}}]\label{coro}
The positions of the simplices of a fine mixed subdivision
of $n\D_{d-1}$ are spread out.
\end{theo}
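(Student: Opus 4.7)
The approach is to prove the theorem by induction on the co-size $n-k$, where $k$ is the size of the sub-simplex $T \subseteq n\Delta_{d-1}$. The base case $n-k=0$ is the remark in Section \ref{sec:fine_mixed_subdivisions} stating that $n\Delta_{d-1}$ contains exactly $n$ simplex tiles. The inductive step reduces to a \emph{corner shaving lemma}: for any fine mixed subdivision $S$ of $n\Delta_{d-1}$ and any $a_0 \in [d]$, at least one simplex tile of $S$ has $x_{a_0}^i = 0$, that is, lies on the facet $F_{a_0}$ opposite the vertex $nw_{a_0}$. Iterating this lemma, applied to the restrictions of $S$ to successively smaller sub-simplices as we tighten the constraints $x_a \geq y_a$ one at a time, would then yield the full statement.

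To establish the corner shaving lemma, I would compare $S$ with its contraction $S/a_0$ from Section \ref{section dual}. The contraction is a fine mixed subdivision of $F_{a_0} \cong n\Delta_{d-2}$ and so, by the result applied in dimension $d-1$, contains exactly $n$ simplex tiles, each a $(d-2)$-simplex lying in $F_{a_0}$. Each such $(d-2)$-simplex tile is the boundary face of a unique cell of $S$ meeting $F_{a_0}$, and the goal is to show that at least one of these extending cells is itself a $(d-1)$-simplex tile of $S$. My approach would be the area/volume calculation sketched in Remark \ref{rem:crossing}: parametrize combinatorially equivalent subdivisions $S_\lambda$ of $\lambda_1 \Delta_{d-1} + \cdots + \lambda_n \Delta_{d-1}$ by positive reals $\lambda_1, \ldots, \lambda_n$, use the identity $\mathrm{Vol}(S_\lambda) = \frac{1}{(d-1)!}\left(\sum_i \lambda_i\right)^{d-1}$, and track the $\lambda_i^{d-1}/(d-1)!$ contributions of the simplex tiles. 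Comparing with the analogous identity for $S/a_0$ should force at least one simplex tile of $S$ to meet $F_{a_0}$.

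The main obstacle is making the induction rigorous, since the restriction of $S$ to an arbitrary sub-simplex $T \subsetneq n\Delta_{d-1}$ need not be a fine mixed subdivision of $T$: cells of $S$ can straddle the boundary of $T$. To avoid this pitfall, I would reformulate the spread out condition combinatorially via Theorem \ref{theorem of positions}: it becomes the assertion that for any $y_1, \ldots, y_d \geq 0$ with $\sum_a y_a = n-k$, at most $k$ indices $i \in [n]$ satisfy $|\{j \neq i : \mathrm{source}(G_{ij}(\sigma)) = a\}| \geq y_a$ for every $a \in [d]$. This assertion could then be attacked by a Hall-type matching argument: for the $n - k$ ``excess'' simplices that should lie outside $T$, exhibit distinct coordinates $a \in [d]$ (with multiplicities $y_a$) in which each one fails the corresponding lower bound, using the acyclicity of the graphs $G_{ij}(\sigma)$ to guarantee the matching exists.
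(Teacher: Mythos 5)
There is a genuine gap here, and you have in fact put your finger on it yourself without closing it. Your first route --- the corner shaving lemma plus induction on the co-size $n-k$ --- fails at the second step of the iteration: after shaving one corner, the next application of the lemma would have to be made to the restriction of $S$ to the sub-simplex $\{x_{a_0}\geq 1\}$, and as you note this restriction is not a fine mixed subdivision (cells of $S$ straddle the hyperplane $x_{a_0}=1$), so the lemma simply does not apply to it. Knowing only that each facet of $n\Delta_{d-1}$ carries at least one simplex is far too weak: it cannot handle a constraint $x_{a_0}\geq 2$, nor the case where a single simplex simultaneously witnesses two different shaved corners, so the count of ``excess'' simplices never gets off the ground. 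Your second route --- reformulating the statement via Theorem \ref{theorem of positions} as a condition on the sources of the graphs $G_{ij}(\sigma)$ --- is a legitimate reduction (together with Theorem \ref{theo_acyclic} it turns the claim into exactly the statement of Theorem \ref{alternative_proof}), but the ``Hall-type matching argument'' you invoke to finish is precisely the entire content of that theorem, and no such argument is exhibited; as written it is a restatement of what must be proved, not a proof.

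For comparison, the paper proves the stronger Theorem \ref{alternative_proof} by a concrete mechanism that your sketch does not contain: it forms the table of positions $T(\sigma)$ whose rows are the Minkowski summands of the simplices, introduces the auxiliary acyclic graphs $H_{ab}(\sigma)$ and relates them to the permutations $\sigma_{ab}$ (Lemma \ref{positions to system}), and then, assuming more than $k$ simplices lie in a sub-simplex of size $k$, follows a path in these graphs from the first row to a row containing no ``light'' letters; acyclicity of $H_{ab}$ then forces the first row to contain at least $m_a$ ``dark'' letters $w_a$ for every $a$, contradicting the width of the dark region. That path-chasing argument (or an equivalent one) is what has to replace both the broken induction and the unspecified matching before your proposal becomes a proof. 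Your corner shaving lemma itself is true and your volume heuristic for it is reasonable, but it only yields the spread-out condition for the $d$ facets, not for arbitrary sub-simplices.
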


In principle, our next result generalizes this:

\begin{theo}\label{alternative_proof}
The positions of the simplices $P(\s)$ of an acyclic system of permutations $\s$
of $n\D_{d-1}$ are spread out.
\end{theo}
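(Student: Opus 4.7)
My plan is to argue by induction on $d$. The base case $d = 2$ follows from Section~\ref{section system 2D}: the $n$ simplex positions lie on the segment $n\D_1$ and are pairwise distinct, which in dimension one is exactly the spread-out condition. Fix $c = (c_1, \ldots, c_d)$ with $c_a \geq 0$ and let $I(c) = \{i : x_a^i \geq c_a \text{ for all } a \in [d]\}$; writing $k = n - \sum c_a$, the goal is $|I(c)| \leq k$.

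The key structural input I plan to develop is a \emph{contraction inequality}: for each $a_0 \in [d]$, the positions $\check x^i$ of the contracted acyclic system $\s/a_0$ on $n\D_{d-2}$ satisfy $\check x_b^i \geq x_b^i$ for all $b \neq a_0$. Indeed, $\check x_b^i$ counts the $j \neq i$ for which $b$ is the unique source of the sub-tournament $G_{ij}(\s)|_{[d]\setminus\{a_0\}}$, and whenever $b$ is already the source of the full tournament $G_{ij}(\s)$ it remains the source of any sub-tournament containing it. Combined with the inductive hypothesis applied to $\s/a_0$, this inequality immediately handles every $c$ that has at least one zero coordinate $c_{a_0} = 0$:
\[
|I(c)| \,\leq\, \bigl|\{i : \check x_b^i \geq c_b \text{ for all } b \neq a_0\}\bigr| \,\leq\, n - \sum_{b \neq a_0} c_b \,=\, n - \sum_b c_b.
\]

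The remaining case --- and the main obstacle of the proof --- is when every $c_a$ is strictly positive, so that no single contraction can absorb a vanishing coordinate. Here I would try to finish via Hall's marriage theorem with multiplicities applied to the sets $Q_a := \{i : x_a^i < c_a\}$: the desired bound $|I(c)| \leq n - \sum c_a$ is equivalent to $|\bigcup_a Q_a| \geq \sum c_a$, which in turn (by Hall) is equivalent to the existence of a system of distinct representatives selecting $c_a$ elements from each $Q_a$, all $\sum c_a$ of them pairwise distinct. By the preceding step, the Hall conditions for every proper subfamily of $\{Q_a\}_{a \in [d]}$ are already verified; the Hall condition for the full family is precisely the statement being proved. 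To break this circularity, I plan to exploit the finer structure of the source map $(i,j)\mapsto a_{ij}$, in particular the fact that the source $a_{ij}$ and the sink $a_{ji}$ of the acyclic tournament $G_{ij}(\s)$ are distinct elements of $[d]$, providing unavoidable double coverage across the $Q_a$'s, and ideally to convert this into an explicit injection from $I(c)$ into $[n]$ whose image misses $\sum c_a$ distinct elements.
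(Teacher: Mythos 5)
Your reduction to the case where every coordinate $c_a$ is strictly positive is sound: the contraction inequality $\check x_b^i \geq x_b^i$ is correct (a source of the full tournament $G_{ij}(\s)$ remains a source of any induced sub-tournament containing it), the contracted system is again acyclic, and the base case $d=2$ works since the $x_1^i$ form a permutation of $\{0,\ldots,n-1\}$. But the proposal then stops exactly where the real difficulty begins. For $c$ with all coordinates positive you offer only a plan: Hall's marriage theorem applied to the sets $Q_a$ requires the Hall condition for the \emph{full} family $\{Q_a\}_{a\in[d]}$, which, as you yourself note, is precisely the inequality $|\bigcup_a Q_a| \geq \sum_a c_a$ you are trying to prove. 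The observation that the source and sink of each tournament $G_{ij}(\s)$ are distinct does not obviously yield the "unavoidable double coverage" you hope for, and no injection is constructed. Since the proper-subset Hall conditions alone never imply the full one (Hall's theorem needs all of them as hypotheses), this is a genuine gap, not a routine verification left to the reader: the heart of the theorem is untouched.

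For comparison, the paper's proof does not induct on $d$ at all. It introduces the $n\times n$ \emph{table of positions} $T(\s)$ whose rows are the Minkowski summands of the simplices, and for each pair $a\neq b$ a digraph $H_{ab}(\s)$ on $[n]$ recording which letters $w_a$, $w_b$ share a column. The key lemma is that $H_{ab}(\s)$ is a subgraph of the acyclic tournament encoded by $\sigma_{ab}$, hence acyclic. Assuming more than $k$ simplices lie in a subsimplex of size $k$, one follows a path in the graphs $H_{ab}$ (which must terminate, by acyclicity) to reach a row all of whose letters $w_a$ off the diagonal are "dark," and then transports that count back to the first row, contradicting the width of the relevant block of columns. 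This is exactly the kind of extra combinatorial structure — going beyond the positions $x_a^i$ themselves to the full source map $(i,j)\mapsto a_{ij}$ and its acyclicity — that your Hall-type argument would need to break the circularity. If you want to salvage your approach, I would suggest proving a lemma of this strength about the table of sources rather than hoping the source/sink dichotomy alone suffices.
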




In order to prove Theorem \ref{alternative_proof}, let us define 
the \emph{table of positions} $T(\s)$ of an acyclic system of permutations $\s$.
This is an $n\times n$ matrix whose 
rows are given by the Minkowski summands of the simplices of
the system of permutations $\s$.  For example, the table of positions of the system of permutations $\{123,231,312\}$
on the top example in Figure \ref{orientation_T3} is 
$$\begin{bmatrix}
ABC&C&B\\
A&ABC&B\\
A&C&ABC
\end{bmatrix}$$

For each $a \neq b \in [d]$, define the directed graph $H_{ab}(\s)$ as the graph on the vertex set $[n]$ containing a directed edge $i\rightarrow j$ if there is a $w_a$ in 
 row $i$ and a $w_b$ in  row $j$ which are in the same column. 
 In the previous example, $H_{AB}(\sigma)$ is a complete graph in three vertices with directed edges $2\rightarrow 1$, $3\rightarrow 1$ and $3\rightarrow 2$. Notice that in this case $H_{AB}(\sigma)$ is the complete acyclic graph that corresponds to the permutation $\sigma_{AB}=321$. In general we have:

\begin{lem}\label{positions to system}
The graph $H_{ab}(\s)$ is a subgraph of the complete, acyclic graph on $[n]$ that corresponds to the permutation $\s_{ab}$.
\end{lem}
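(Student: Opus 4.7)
The plan is to unravel the definitions and reduce the statement to a case analysis based on which column of the table $T(\sigma)$ witnesses the edge $i \to j$ in $H_{ab}(\sigma)$. Suppose $i \to j$ is an edge of $H_{ab}(\sigma)$, witnessed by column $k$; that is, entry $(i,k)$ contains $w_a$ and entry $(j,k)$ contains $w_b$. We want to conclude that $i$ precedes $j$ in the permutation $\sigma_{ab}$. By Theorem \ref{theorem of positions}, when $r \neq s$ the $(r,s)$ entry of $T(\sigma)$ is the singleton $w_{c}$ where $c$ is the unique source of $G_{rs}(\sigma)$, while the diagonal entry $(r,r)$ is the full vertex set $\{w_1,\dots,w_d\}$.

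I would then split into three cases depending on whether $k=i$, $k=j$, or $k\notin\{i,j\}$. In the case $k=j$, entry $(j,j)$ already contains every $w_b$, so the witness condition gives $a = \mathrm{source}(G_{ij}(\sigma))$. Since every edge incident to a source is outgoing, $a\to b$ is an edge of $G_{ij}(\sigma)$, which by definition means $\sigma_{ab}$ has the form $\ldots i\ldots j\ldots$, as desired. The case $k=i$ is symmetric: entry $(i,i)$ contains every $w_a$ and the condition forces $b = \mathrm{source}(G_{ji}(\sigma))$, so $b\to a$ is an edge of $G_{ji}(\sigma)$; this says $\sigma_{ba}$ has $j$ before $i$, and reversing (using $\sigma_{ab}=\sigma_{ba}^{\mathrm{rev}}$) gives that $\sigma_{ab}$ has $i$ before $j$.

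The remaining case $k\notin\{i,j\}$ is the one that uses an intermediate step. Here both witnessing entries are singletons, so $a=\mathrm{source}(G_{ik}(\sigma))$ and $b=\mathrm{source}(G_{jk}(\sigma))$. The first says $a\to b$ is an edge of $G_{ik}(\sigma)$, hence $\sigma_{ab}$ has $i$ before $k$; the second says $b\to a$ is an edge of $G_{jk}(\sigma)$, hence $\sigma_{ba}$ has $j$ before $k$, which by reversal means $\sigma_{ab}$ has $k$ before $j$. Concatenating these two relative orderings in $\sigma_{ab}$ yields $i$ before $k$ before $j$, and in particular $i$ before $j$.

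In all three cases $i$ precedes $j$ in $\sigma_{ab}$, so $i\to j$ is an edge of the acyclic tournament on $[n]$ determined by $\sigma_{ab}$, proving the lemma. The main subtlety is just keeping track of which permutation is being reversed at each step and remembering that the diagonal entries of $T(\sigma)$ behave as ``wildcards'' containing every $w_c$; once these bookkeeping points are in place, the three cases each reduce to a one-line application of the definition of $G_{ij}(\sigma)$ and its source.
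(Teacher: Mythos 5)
Your proof is correct and follows essentially the same route as the paper's: the same three-way case split on the witnessing column $k\in\{i\}$, $\{j\}$, or $[n]\setminus\{i,j\}$, with the third case passing through the intermediate index to get $\sigma_{ab}$ of the form $\dots i\dots k\dots j\dots$. Your version is slightly more explicit about the reversal conventions and the diagonal entries acting as wildcards, but the argument is the same.
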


\begin{proof}
We need to prove that if $H_{ab}$ has a directed edge $i\rightarrow j$ 
then the permutation $\s_{ab}$ is of the form $\dots  i\dots  j\dots  $. 
Suppose $i\rightarrow j$ in $H_{ab}$. Then there is a column $\ell $ of the table $T(\sigma)$ 
such that $w_a\in T_{i\ell }$ and $w_b\in T_{j\ell }$.\\
\textit{Case 1:} If $\ell =j$, then $T_{ij}=w_a$. This means that the graph $G_{ij}$ has a source at $a\in [d]$, which implies that the permutation $\sigma_{ab}$ is of the form $\dots  i\dots  j\dots  $.\\
\textit{Case 2:} If $\ell =i$, an analogous argument works.\\
\textit{Case 3:} If $\ell \neq i,j$, then $T_{i\ell }=w_a$ and $T_{j\ell }=w_b$, which means that the graphs $G_{i\ell }$ and $G_{j\ell }$ have sources at $a\in[d]$ and $b\in [d]$ respectively. This implies that the permutation $\sigma_{ab}$ is of the form $\dots  i\dots  \ell \dots  j\dots$.
\end{proof}

\begin{remark}
The graph $H_{ab}$ is in general only a proper subgraph of the graph that corresponds to $\sigma_{ab}$. In particular, in contrast with the 2-dimensional case, the ordered list of positions of the simplices (or even their Minkowski sum decompositions) is not sufficient to determine the system of permutations. For instance, there are exactly two subdivisions of $2\Delta_3$ for which the Minkowski decomposition of the two unit simplices are $ABCD+A$ and $B+ABCD$. These two subdivisions have different systems of permutations. The graph $H_{CD}$ in these cases consists of two vertices $C$ and $D$ without any edge.
\end{remark}

\begin{proof}[Proof of Theorem \ref{alternative_proof}]
Let $\s$ be an acyclic system of permutations on the edges of $n\D_{d-1}$,
$P(\s)$ be the set of positions of the simplices and $T(\s)$
be the table of positions.
Suppose that there is a sub-simplex $\D$ of size $k$ containing 
more than $k$ simplices of $\s$. 
This sub-simplex is given by
\[
\D =\{ x=(x_1\ldots x_d)\in \mathbb 
R^d : x_i \geq m_i \text{ and } x_1+\ldots x_d =1 \},
\] 
for some non-negative integers $m_1,\dots  ,m_d$ such that 
$m_1+\dots  +m_d=n-k$.  
Without loss of generality, we assume that the
(more than $k$) simplices of $\s$ that are contained in $\D$
correspond to the first rows of the table $T(\s)$. 
Each one of these rows contains (off of the diagonal) 
at least $m_a$ letters $w_a$ for all $a \in [d]$.
Call such a letter \emph{dark} if it is in the shaded rectangle 
in Figure \ref{Table}, and \emph{light} if it is in the white square
on the upper left. This shaded rectangle has width less than $n-k$.

\begin{figure}[h]
	\centering
	\includegraphics[width=0.5\textwidth]{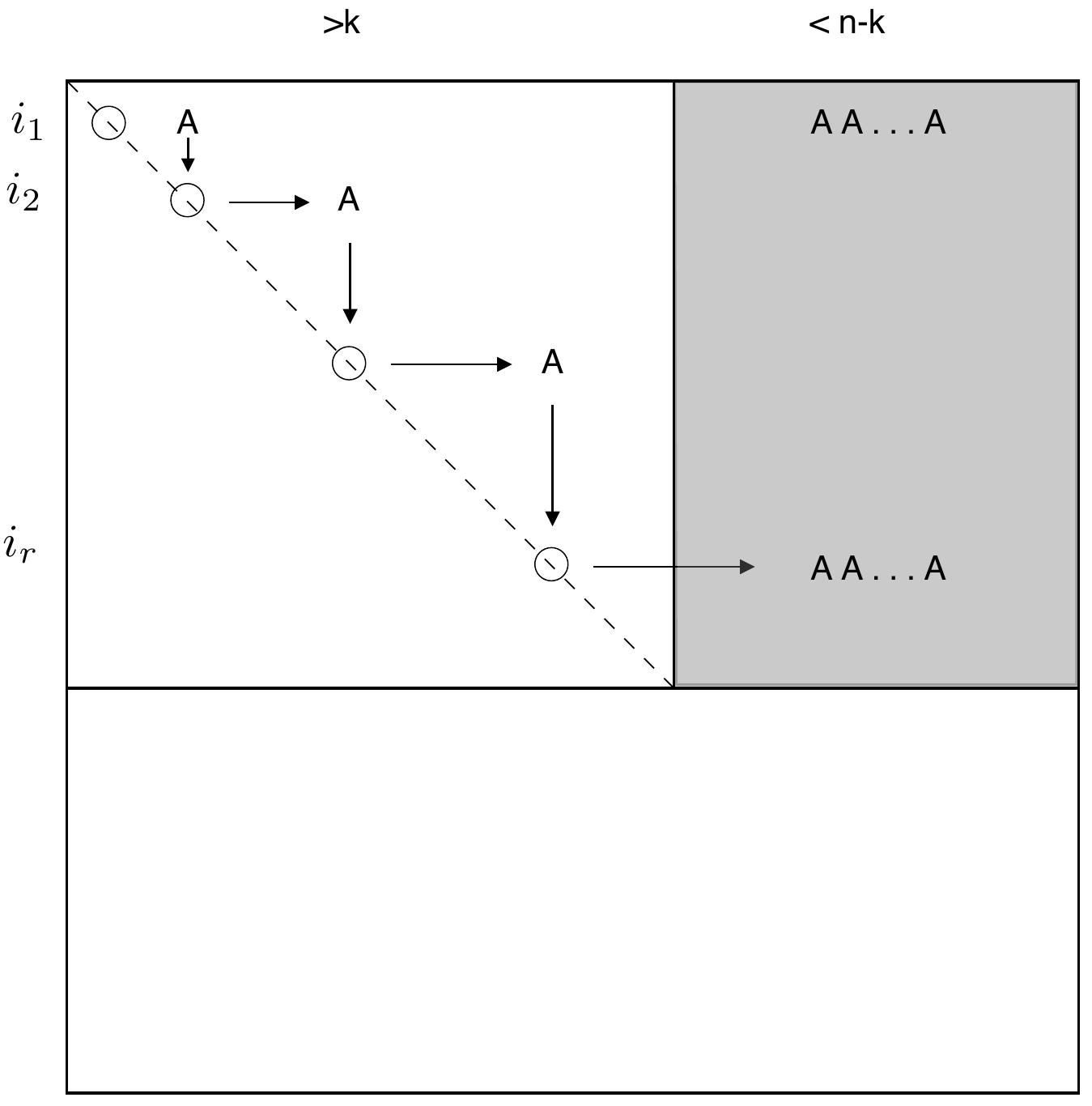}
	\caption{Table of positions $T(\s)$}
	\label{Table}
\end{figure}

\noindent We will prove that the first row of $T(\s)$
has at least $m_a$ dark letters $w_a$ for all $a\in [d]$.
If $m_a=0$ the result is obvious.
Now suppose $m_a>0$.
If there is no light letter $w_a$ on the first row of $T(\s)$
then the claim clearly follows. Otherwise, we will
construct, simultaneously for all $b \in [d] \backslash \{a\}$,
a path
$1=i_1\rightarrow \dots  \rightarrow i_r$
in the graphs $H_{a b}$
ending on a row $i_r$ that has no light letter $w_a$. To do so, we start by drawing the arrows $1=i_1\rightarrow i_2$ in $H_{a b}$
for all $b$, 
where $i_2$ is the column of the first letter $w_a$ under consideration.
If there is no light letter $w_a$ in row $i_2$, we are done; otherwise, we continue the process.
Since the graphs $H_{a b}$ have no cycles, this
process must end at some row $i_r$. 

Notice that row $i_r$ must contain at least $m_a$ dark letters, since it contains no light ones except for the one on the diagonal.
Now consider the letters on the first row which are directly above the dark letters
$w_a$ on the row $i_r$. They must all be equal to $w_a$, or else they would 
form a cycle 
$1=i_1\rightarrow \dots  \rightarrow i_r \rightarrow 1$
in $H_{ab}$ for some $b$.
Therefore, the first row of $T(\s)$
has at least $m_a$ dark letters $w_a$ as we claimed.

Finally, observe that the first row contains at least $m_a$ dark letters $w_a$ for all $a \in [d]$, so the shaded rectangle must have width at least $m_1 + \dots + m_d = n-k$, a contradiction.
\end{proof}

We conjecture that the converse of Theorem \ref{alternative_proof} is true as well:

\begin{weakholes_conj}\label{system_holes_conjecture}
Any $n$ spread out simplices in $n\D_{d-1}$ can be achieved as the simplices of an acyclic system of permutations.
\end{weakholes_conj}

This conjecture is weaker than the Spread Out Simplices Conjecture \ref{holes_conjecture},
and has the advantage that it is more tractable computationally for small 
values of $n$. Conjecture \ref{holes_conjecture} would follow from 
Conjecture \ref{conj_acyclic} and 
Conjecture \ref{system_holes_conjecture}.
In the next Section we prove these three conjectures in the special
case $n=3$.

As we remarked earlier, Santos~\cite{Sa2012} recently gave an example of an acyclic system of permutations on the edges of $5\Delta_3$ which does not extend to a mixed subdivision. This is a counterexample to the Acyclic System Conjecture \ref{conj_acyclic}. By Theorem \ref{alternative_proof}, Santos's example gives a 5 spread out simplices in $5\Delta_3$, which would be a good candidate for a counterexample of the Spread Out Simplices Conjecture \ref{holes_conjecture}. However, as Santos remarks, these 5 spread out simplices can be extended to a mixed subdivision of $5 \Delta_3$. The Spread Out Simplices Conjecture \ref{holes_conjecture} (and its Weak version, Conjecture \ref{system_holes_conjecture}) remain open.


\subsection{\textsf{The Spread Out Simplices Theorem for simplices of size three
}}

\begin{theo}[Acyclic System Conjecture \ref{conj_acyclic} for $n=3$]
\label{size3 cesar's conjecture}
Every acyclic system of permutations on $3\Delta_{d-1}$ 
is achievable as the system of permutations of a fine mixed subdivision.
\end{theo}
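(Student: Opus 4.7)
The plan is to reduce the theorem to the two-dimensional case via duality. Given an acyclic system $\sigma$ on the edges of $3\Delta_{d-1}$, I would first pass to the dual system $\sigma^*$, which lives on the edges of $d\Delta_2 = d\Delta_{n-1}$ (since $n=3$). By the proposition established in Section \ref{section dual}, $\sigma^*$ is again an acyclic system of permutations, now of $[d]$. Thus $\sigma^*$ is an acyclic system of $\binom{3}{2}=3$ permutations on the edges of the triangle $d\Delta_2$.

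Next I would invoke the two-dimensional Acyclic System Theorem (Theorem \ref{theo_characterization_dim2}), which guarantees a lozenge tiling $R$ of $d\Delta_2$ whose system of permutations $\sigma(R)$ equals $\sigma^*$. Finally, I would take the Cayley-trick dual subdivision $S := R^*$, which is a fine mixed subdivision of $3\Delta_{d-1}$. The compatibility of the two notions of duality, namely Proposition \ref{dual_conf=conf_of_dual}, gives
\[
\sigma(S) \;=\; \sigma(R^*) \;=\; \sigma(R)^* \;=\; (\sigma^*)^* \;=\; \sigma,
\]
where the last equality uses the involutivity of $*$ on systems of permutations established earlier. This produces the desired subdivision realizing $\sigma$.

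In brief, the proof consists of three moves: dualize the given system to obtain an acyclic two-dimensional system on $d\Delta_2$; realize it as a lozenge tiling using Theorem \ref{theo_characterization_dim2}; and dualize back. All three moves are supplied by machinery already developed in the paper. There is no real hard step here: the substantive content is packaged inside Theorem \ref{theo_characterization_dim2} (which handles arbitrary $n$ in dimension two), and the combinatorial equivalence of systems and tilings under duality is exactly what Proposition \ref{dual_conf=conf_of_dual} was designed to provide. The one thing to double-check is that the duality on subdivisions is genuinely an involution, i.e.\ $(R^*)^* = R$; this follows directly from the Cayley trick, since the bijection between mixed subdivisions and triangulations of $\Delta_{n-1}\times \Delta_{d-1}$ is manifestly symmetric under swapping the two factors. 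If any obstacle is to be flagged, it is purely bookkeeping: making sure the indexing conventions for $\sigma^*_{ij}$ in Section \ref{section dual} match the convention used to extract $\sigma(R)$ from a tiling $R$ of $d\Delta_{n-1}$, so that the chain of equalities above is strictly literal rather than up to a relabeling.
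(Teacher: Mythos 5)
Your proposal is correct and follows essentially the same route as the paper's proof: dualize the system to $d\Delta_2$, realize it by a lozenge tiling via Theorem \ref{theo_characterization_dim2}, and dualize back, using Proposition \ref{dual_conf=conf_of_dual} and the involutivity of $*$ to conclude. The paper's version is just a terser statement of the same three steps.
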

\begin{proof}
Let $\s$ be an acyclic system of $3\Delta_{d-1}$ and $\s^*$ be the dual
system of $d\D_2$. By
Theorem \ref{theo_characterization_dim2} 
there exist a subdivision $S^*$ of $d\D_2$ whose 
system of permutations is equal to $\s^*$. 
The system of permutations of the 
dual subdivision $S=(S^*)^*$ of $3\Delta_{d-1}$ is $\s$.
\end{proof}

\begin{theorem}[Weak Spread Out Simplices Conjecture \ref{system_holes_conjecture} for $n=3$]
Any three  spread out simplices in $3\D_{d-1}$ 
are achievable as the simplices of an acyclic system of permutations.
\end{theorem}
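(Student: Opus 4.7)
The plan is to reduce to the two-dimensional case via duality, paralleling the proof of Theorem \ref{size3 cesar's conjecture}. Given three spread out simplices $T_1, T_2, T_3$ in $3\Delta_{d-1}$ with positions $p_1, p_2, p_3$, we want to produce an acyclic system of permutations on $3\Delta_{d-1}$ whose three simplices are $T_1, T_2, T_3$. By Theorem \ref{size3 cesar's conjecture} and Proposition \ref{dual_conf=conf_of_dual}, this is equivalent (after dualizing) to constructing an acyclic system $\sigma^*$ of permutations of $[d]$ on the three edges of $d\Delta_2$; by Theorem \ref{theo_characterization_dim2}, this in turn amounts to producing a lozenge tiling of $d\Delta_2$. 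By Remark \ref{simplicesviadual}, the simplices of the primal system are encoded by the first letters $a_{i,j} := \sigma^*_{ij}(1)$ of the three edge-permutations at each endpoint, subject to $\{a_{i,j}, a_{i,k}\} = \mathrm{supp}(p_i)$ (as a multiset) for each $i \in \{1,2,3\}$.

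We exploit two sources of freedom: labeling the three simplices by $\{1,2,3\}$, and, for each mixed simplex $p_i = e_a + e_b$ with $a \ne b$, choosing between the two assignments $(a_{i,j}, a_{i,k}) \in \{(a,b), (b,a)\}$. The first main step is to show that we can choose labels and assignments so that the partial tournament on $\{1,2,3\}$ forced by the prescribed first and last letters contains no 3-cycle for any $a \ne b \in [d]$. A forced 3-cycle $1 \to 2 \to 3 \to 1$ in $G^*_{ab}$ requires, for each directed edge of the cycle, that the outgoing permutation has $a$ as its first letter or that the incoming permutation has $b$ as its last letter. The two extreme scenarios of this type force a common letter to lie in $\mathrm{supp}(p_1) \cap \mathrm{supp}(p_2) \cap \mathrm{supp}(p_3)$, contradicting spread out; every remaining scenario requires some $p_i$ to contain both $a$ and $b$ in its support, which we resolve by swapping the assignment of that (necessarily mixed) simplex. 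A case analysis organized by the number of doubled simplices and by the overlap structure of their supports produces a single global choice of labels and assignments that simultaneously defeats every potential forced 3-cycle.

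With the partial tournaments acyclic, we complete the construction by filling in the $d - 2$ middle entries of each $\sigma^*_{ij}$ so that every $G^*_{ab}$ is acyclic. A consistent choice of middles can be made by selecting a linear order $\tau$ on $[d]$, setting the middles of $\sigma^*_{12}$ and $\sigma^*_{23}$ to be $\tau$ restricted to their respective complement sets and the middle of $\sigma^*_{31}$ to be the reverse of $\tau$ restricted to its complement, and then making controlled local adjustments to handle partially-forced tournaments; acyclicity follows from a direct verification pair by pair. Once $\sigma^*$ is acyclic, Theorem \ref{theo_characterization_dim2} gives the desired lozenge tiling of $d\Delta_2$, and dualizing via Theorem \ref{size3 cesar's conjecture} yields a subdivision of $3\Delta_{d-1}$ whose three simplices are $T_1, T_2, T_3$. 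The main obstacle is the case analysis in the first step: overlapping bad scenarios can compete for the same simplex's assignment, so one must verify carefully that spread out leaves enough slack for a consistent global choice to exist.
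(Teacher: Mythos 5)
Your reduction is the same as the paper's: dualize, and use Remark \ref{simplicesviadual} to translate ``the three simplices are $T_1,T_2,T_3$'' into ``the two edge-permutations of $\sigma^*$ emanating from vertex $i$ of $d\Delta_2$ begin with the two letters of $\mathrm{supp}(p_i)$.'' (One small simplification: for \emph{this} statement you only need $\sigma^*$ to be an acyclic system, not a tiling, so the detour through Theorem \ref{theo_characterization_dim2} is unnecessary; it is only needed later to upgrade the system to a subdivision.) The problem is that everything after the reduction --- which is the entire mathematical content of the theorem --- is asserted rather than proved. Both of your key steps, ``a case analysis \ldots produces a single global choice of labels and assignments that simultaneously defeats every potential forced 3-cycle'' and ``making controlled local adjustments \ldots acyclicity follows from a direct verification pair by pair,'' are exactly the claims that need verification, and you concede as much in your final sentence. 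The first step is genuinely delicate: for the spread-out triple with supports $AB$, $AC$, $BC$, the assignment $\sigma^*_{12}=A\cdots C$, $\sigma^*_{23}=A\cdots B$, $\sigma^*_{31}=C\cdots B$ satisfies all endpoint constraints yet forces the $3$-cycle $1\to2\to3\to1$ in $G_{AB}(\sigma^*)$, while the other consistent assignment works; so the swapping mechanism is really needed, different pairs $(a,b)$ can compete for the same swap, and no argument is given that a globally consistent choice always exists. The second step is similarly unproved: for pairs $(a,b)$ where one or two edges are forced by the endpoint data and the rest are free, the $\tau$/reverse-$\tau$ recipe for the middles must be modified, and the modifications for different pairs must be realizable by a single triple of permutations.

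The paper closes exactly this gap by brute force rather than by a uniform argument: since each simplex position involves at most two letters, at most six letters of $[d]$ are relevant, so there are only finitely many combinatorial types of spread-out triples up to relabeling; Figure \ref{size3} lists them all and exhibits an explicit acyclic dual system on $d\Delta_2$ for each, after which dualizing (Proposition \ref{dual_conf=conf_of_dual} and Remark \ref{simplicesviadual}) finishes the proof. To repair your write-up with the least effort, you should either carry out your two case analyses in full, or replace them with this finite enumeration --- observing explicitly why the list of types is finite and complete, and displaying a valid $\sigma^*$ for each type.
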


\begin{proof}
The position of a simplex  of an acyclic system of permutations
of $3\D_{d-1}$ corres\-ponds to a Minkowski sum of the form 
$w_1\dots  w_d + w_{a_1}+w_{a _2}$. We identify the position of 
such a simplex with the pair of letters $w_{a_1}w_{a_2}$.
For simplicity, we denote the letters $w_1,\dots ,w_d$ by  by $A,B,\dots ,H$.
Figure \ref{size3} lists all the possible (combinatorial types of) triples of pairs of letters which correspond to positions of spread out simplices. (For instance $AB, AC, AD$ is missing because these would correspond to three simplices in a simplex of size $2$.)
\begin{figure}[h]
	\centering
	\includegraphics[width=0.9\textwidth]{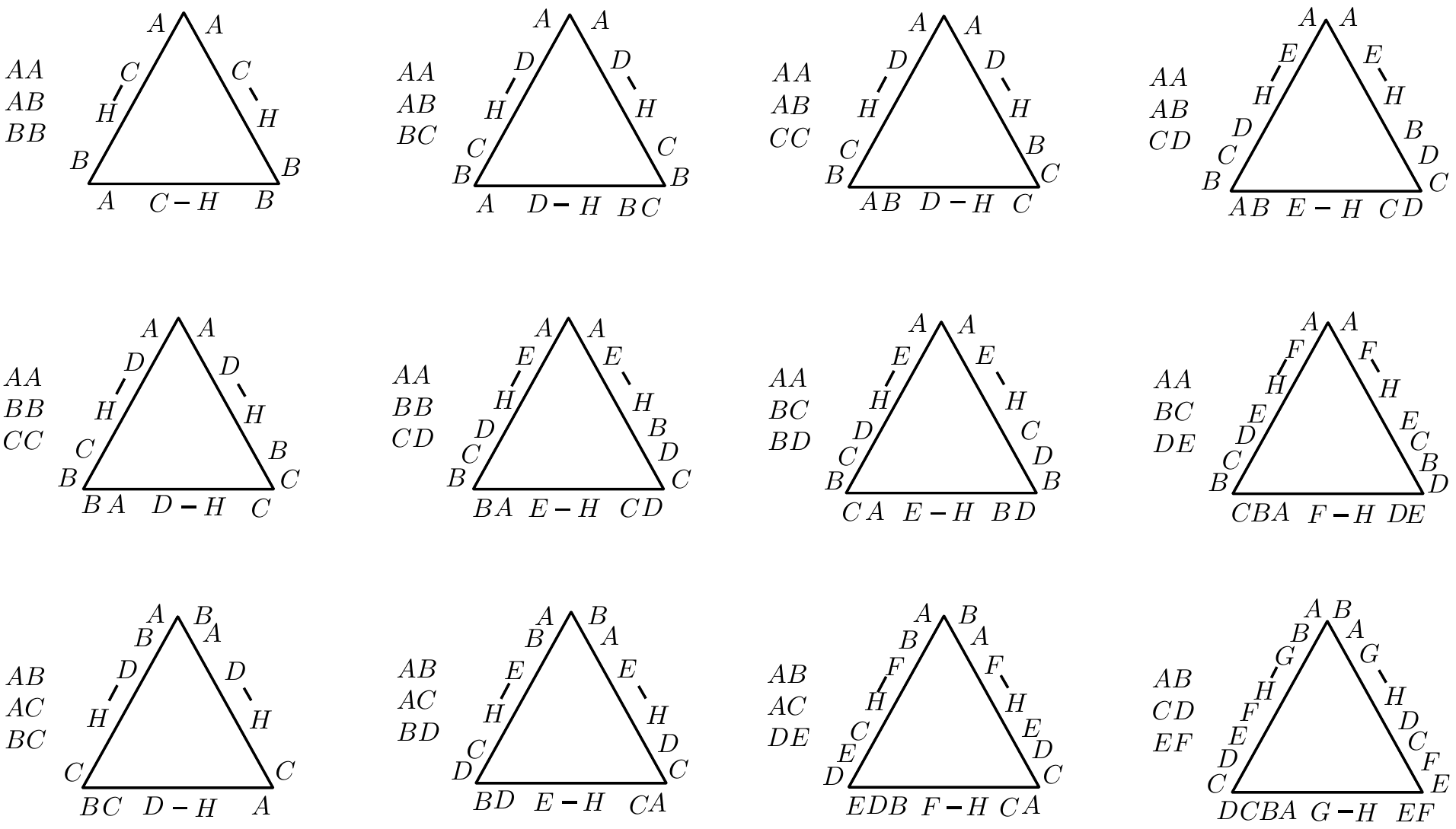}
	\caption{Combinatorial types of the positions of any three spread out simplices in 
	$3\Delta_{d-1}$, and (non-unique) dual acyclic systems which generate such positions.}
	\label{size3}
\end{figure}

By duality and Remark \ref{simplicesviadual}, we will be done if, for each such triple, we can build a dual acyclic system $\s^*$ of $d\D_2$ such that these are the pairs of labels adjacent to each of the three vertices of $d\D_2$. This is also done in Figure \ref{size3}. The duals to those acyclic systems 
give rise to the desired simplex positions.
\end{proof}

\begin{theo}[Spread Out Simplices Conjecture \ref{holes_conjecture} for $n=3$]
\label{size 3 federico's conjecture}
Three simplices in $3\Delta_{d-1}$ can be extended to a fine mixed subdivision if and only if they are spread out.
\end{theo}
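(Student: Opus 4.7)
The plan is to simply glue together the three tools we have just built. The forward implication has already been established as Theorem \ref{coro}: in any fine mixed subdivision of $n\Delta_{d-1}$, the $n$ simplex positions are spread out. So only the converse requires work, and I would argue it in three steps.

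First, given three spread out simplices in $3\Delta_{d-1}$, apply the Weak Spread Out Simplices Conjecture for $n=3$ (which we just proved by the case analysis of Figure \ref{size3}) to produce an acyclic system of permutations $\sigma$ on the edges of $3\Delta_{d-1}$ whose set of simplex positions $P(\sigma)$ is exactly the given triple. Second, apply Theorem \ref{size3 cesar's conjecture} (the Acyclic System Conjecture for $n=3$, obtained via duality from the two-dimensional Acyclic System Theorem \ref{theo_characterization_dim2}) to produce a fine mixed subdivision $S$ of $3\Delta_{d-1}$ whose system of permutations $\sigma(S)$ equals $\sigma$. Third, invoke Theorem \ref{theorem of positions}, which says that the numbered positions of the simplices of $S$ are completely determined by $\sigma(S)$, and are given by the formula defining $P(\sigma)$. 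Therefore the three simplices of $S$ are precisely the three spread out simplices we started with, i.e., these simplices extend to the fine mixed subdivision $S$.

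There is essentially no obstacle left at this stage; the argument is a one-paragraph assembly of the three theorems, once one observes that the definition of $P(\sigma)$ used in the Weak Spread Out Simplices Conjecture is exactly the description of simplex positions produced by Theorem \ref{theorem of positions}. The only thing to double-check while writing is the bookkeeping for numberings: the Weak conjecture produces an \emph{ordered} triple of simplices with a chosen labeling, and Theorem \ref{theorem of positions} reconstructs the simplices with the \emph{same} labeling, so the ordered/unordered distinction does not cause trouble (and in particular the spread out condition is a condition on the underlying unordered set of positions, which is preserved).
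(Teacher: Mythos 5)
Your argument is correct and is exactly the paper's proof: the paper also deduces the converse by chaining the Weak Spread Out Simplices Conjecture for $n=3$, Theorem \ref{size3 cesar's conjecture}, and Theorem \ref{theorem of positions}, with the forward direction supplied by Theorem \ref{coro}. You have merely written out explicitly the assembly that the paper compresses into one line, including the (correct) observation that the labeling bookkeeping is harmless.
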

\begin{proof}
This result is a consequence of the previous two theorems.
\end{proof}

\bigskip

\noindent\thanks{\textsf{\textbf{Acknowledgments:}} 
We would like to thank Hwanchul Yoo for his example in Section \ref{sec:yoo}, and Christopher O'Neill for running some computational experiments in support of the Weak Spread Out Simplices Conjecture \ref{system_holes_conjecture}.   
We are also grateful to Carolina Benedetti and Erik Backelin for helpful
discussions about this paper. 
}

\end{document}